\newtheorem{theorem}{Theorem}[section]
\newtheorem{lemma}[theorem]{Lemma}
\newtheorem{proposition}[theorem]{Proposition}
\newtheorem{corollary}[theorem]{Corollary}
\theoremstyle{definition}
\newtheorem{definition}[theorem]{Definition}
\newtheorem{example}[theorem]{Example}
\theoremstyle{remark}
\newtheorem{remark}[theorem]{Remark}
\numberwithin{equation}{section}
\numberwithin{equation}{section}
\newif\ifPDF
\newcommand{\be}{\begin{equation}}
\newcommand{\ee}{\end{equation}}
\newcommand{\ba}{\begin{aligned}}
\newcommand{\ea}{\end{aligned}}
\newcommand{\A}{{\mathcal{A}}}
\newcommand{\La}{{\mathcal{L}}}
\newcommand{\N}{{\mathbb N}}
\newcommand{\Z}{{\mathbb Z}}
\def\csi1{\circ\sigma^{-1}}
\def\ol{\overline}
\newcommand{\B}{{\mathcal B}}
\begin{document}

\title[Substitution-dynamics and invariant measures for infinite alphabet-path space.]{Substitution-dynamics and invariant measures for infinite alphabet-path space.}

\author{Sergey Bezuglyi}
\address{University of Iowa, Department of Mathematics. Iowa City, IA, 52242, USA.}
\email{sergii-bezuglyi@uiowa.edu}

\author{Palle E. T. Jorgensen}
\address{University of Iowa, Department of Mathematics. Iowa City, IA, 52242, USA.}
\email{palle-jorgensen@uiowa.edu}

\author{Shrey Sanadhya}
\address{Ben Gurion University of the Negev. Department of Mathematics. Be’er Sheva, 8410501, Israel and University of Iowa, Department of Mathematics. Iowa City, IA, 52242, USA.} \email{shrey.sanadhya@mail.huji.ac.il}
\address{Einstein Institute of Mathematics, The Hebrew University of Jerusalem, Edmond
J. Safra Campus, Jerusalem, Jerusalem, 91904, Israel}
\email{shrey.sanadhya@mail.huji.ac.il}

\subjclass[2020]{37B10, 37A05, 37B05, 37A40, 54H05, 05C60}

\keywords{Borel dynamical systems, substitutions, infinite alphabet, Bratteli-Vershik model, shift-invariant measures, tail invariant measures.}

\begin{abstract} We study substitutions on a countably infinite alphabet (without compactification) as Borel dynamical systems. We construct stationary and non-stationary generalized Bratteli-Vershik models for a class of such substitutions, known as \textit{left determined}. In this setting of Borel dynamics, using a stationary generalized Bratteli-Vershik model, we provide a new and canonical construction of shift-invariant measures (both finite and infinite) for the associated class of subshifts.
\end{abstract}

\maketitle

\tableofcontents
\section{Introduction.}\label{sect Intro}

In this paper, we consider a class of substitution dynamical systems on  
a countably  infinite alphabet and discuss the realization of such systems by 
means of generalized Bratteli diagrams. Using the Perron-Frobenius theorem 
for infinite positive matrices, we describe invariant measures of substitution
dynamical systems.  
We present answers to the following questions in our analysis of substitution dynamics over an infinite alphabet (a partial list): Kakutani-Rokhlin towers for substitution dynamical systems on infinite alphabets, stationary and non-stationary generalized Bratteli-Vershik models for substitution dynamical systems on infinite alphabets, description of invariant measures (both finite and $\sigma$-finite) over the path space of the generalized Bratteli diagrams, and correspondence between tail invariant measures to shift-invariant measures.

Our present results are motivated by diverse applications. In this connection, we stress connections between our present focus on path-space analysis from symbolic dynamics on the one hand and a variety of applied topics on the other. The list includes both direct and indirect interrelationships. Moreover, these connections have increased over time, see e.g., \cite{Beal_Berstel_Eilers_2021}, \cite{Jeandel_2016}, \cite{Banerjee_McPhee_2016} for a partial list. Some of these connections include theoretical computer science, combinatorics, search algorithms, substitution algorithms, analysis on graphs, graph neural network models, theoretical probability, thermodynamics, and empirical behavior of differential equations from big data. Consequently, results from symbolic dynamics have come to play an increasingly important role in applied mathematics.

Substitution dynamical systems defined on a finite alphabet have been extensively studied 
for the last decades. There are several books covering the recent developments, see e.g. \cite{Fogg2002}, \cite{Queffelec_2010}. The connection of substitution dynamical systems with ``finite'' (standard) Bratteli diagrams is well known, see the papers \cite{Durand_Host_Skau_1999}, 
\cite{Forrest_1997}, \cite{Durand_2010} for the case of minimal dynamical systems and   \cite{Bezuglyi_Kwiatkowski_Medynets_2009}, 
\cite{B_K_M_S_2010} for aperiodic substitutions.   Roughly speaking, any
nontrivial substitution dynamical system is represented by a corresponding
Vershik map acting on the path space of a stationary Bratteli diagram. 

Very few results are known about substitution dynamical systems on an 
infinite alphabet. We were motivated by \cite{Ferenczi_2006} where the 
author carefully considered the ``square drunken man substitution''
$n \to (n-2)nn (n+2), n \in \Z$. 

We shall study substitution dynamical systems, stressing the dichotomy of finite alphabet vs infinite alphabet. Recently there has been a renewed interest in the study of dynamical systems on an infinite alphabet. In this context, we refer to the papers on Markov shifts and substitutions: \cite{Takahasi2020}, \cite{OttTomfordeWillis2014},
\cite{Jaerisch2014}, \cite{Gray2011}, \cite{GurevichShevchenko1998},
\cite{RowlandYassawi2017}, \cite{Dombek2012}, \cite{Ferenczi_2006},
\cite{Mauduit2006}. Dynamical systems on infinite alphabets have obvious 
particularities: the underlying space is not compact. Some authors use
the compactification of the space
 \cite{Frank_Sadun_2014}, \cite{Durand_Ormes_Petite_2018} and \cite{Manibo_Rust_Walton_2021}.  In the current work, our focus is on the study of substitution dynamical systems over countable infinite alphabets \textit{without} the assumption of compactification. Hence, the associated subshift should be viewed as a Borel dynamical system (in fact, it is a homeomorphism of a Polish space). 

In the study of substitution dynamical systems over finite alphabets, the notion of \textit{recognizability} plays an important role. The term was first coined by Martin \cite{Martin_1973}. This notion has been studied by Moss\'{e} \cite{Mosse_1992,Mosse_1996}. She showed that primitive aperiodic substitutions on finite alphabets are recognizable. Bezuglyi, Kwiatkowski, and Medynets \cite{Bezuglyi_Kwiatkowski_Medynets_2009} extended Moss\'{e}'s result by relaxing the requirement of primitivity. They showed that all aperiodic substitutions on finite alphabets are recognizable. Berthé, Steiner, Thuswaldner, and Yassawi \cite{Berthe_Steiner_Thuswaldner_Yassawi_2019} studied recognizability (and also eventual recognizability) for sequences of morphisms that define an S-adic shift. The notion of recognizability has been studied in word combinatorics under the name circularity (see \cite{Cassaigne_1994,Klouda_Starosta_2019,Mignosi_Seebold_1993}) and in the study of self-similar tilings as unique composition property (see \cite{Solomyak_1998}). 

For substitutions over infinite alphabets, we need a property similar to recognizability. S. Ferenczi \cite{Ferenczi_2006} introduced a notion  of a  \textit{left determined substitution}, which is
stronger than recognizability (see Definition \ref{left det}).  In this paper, we show that a left determined substitution admits its realization as a Vershik map on a  stationary generalized Bratteli diagram, see Definition \ref{def GBD}. We prove in this paper the following 
result (see Sections \ref{sect left determ} and
\ref{Sec BBD} for necessary definitions).

\begin{theorem}\label{Main_1} Let $\sigma $ be a bounded size left determined substitution on a countably infinite alphabet and $(X_\sigma, T)$ be the corresponding subshift. Then there exists a stationary ordered generalized Bratteli diagram $B = (V, E, \geq)$ and a Vershik map $\varphi : Y_B \rightarrow Y_B$ such that $(X_\sigma, T)$ is isomorphic to $(Y_B, \varphi)$.

\end{theorem}

Our focus on the infinite case in turn is motivated by specific applications. First note that classes of dynamical systems arising from substitutions over a finite alphabet have already proved to play a central role within the wider subject of dynamical systems, and more specifically in the study of symbolic dynamics. One reason for this is the role substitution dynamics play in diverse applications. The other reason is their amenability to use in random walk models with specified transition matrices. The latter in turn are made precise with the use of combinatorial diagrams, more specifically, Bratteli diagrams; or Bratteli-Vershik diagrams.

We recall that Bratteli diagrams were first introduced by O. Bratteli in his seminal paper \cite{Bratteli_1972}. A Bratteli diagram is a combinatorial structure; specifically, a graph composed of vertex levels  (``levels''), and an associated system of unoriented edges, edges between vertices having levels differing by one: each edge only links vertices, source and range, in successive levels, hence loops are excluded. They have turned out to provide diverse and very powerful ``models'' for many mathematical structures. For example approximately finite-dimensional $C^*$ algebras (also called AF-algebras), i.e., inductive limits of systems of matrix algebras. AF-algebras are important in the field of operator algebras and their applications to quantum physics. Moreover, Bratteli diagrams are used in combinatorics, and in computation, for example in 
an analysis of directed systems of fast Fourier transforms, arising for example in big data models.

The first usage of Bratteli diagrams  can be found in  \cite{Vershik_1981}) 
(under a different name) where the author represented any ergodic transformation of probability measure space as a map (now known as a Vershik map) acting on the path space of a Bratteli diagram. His work has been of immense importance as it provides transparent models for ergodic dynamical systems. Then  in the 1990's, Giordano, Herman, Putnam, and Skau (see \cite{HermanPutnamSkau_1992}; \cite{Giordano_Putnam_Skau_1995}) and 
Glasner and Weiss \cite{GlasnerWeiss_1995} successfully extended this work to minimal Cantor dynamical systems. They constructed Bratteli-Vershik models for minimal Cantor systems. As an application, they classified minimal Cantor systems up to orbit equivalence. More results about applications of Bratteli 
diagrams for constructions of models in Cantor dynamics can be found in 
\cite{Medynets_2006},  \cite{DownarowiczKarpel_2019}, 
\cite{Shimomura_2020}. 

In many cases, the properties of a Bratteli diagram are determined by the 
properties of incidence matrices.  They show transitions between neighboring levels.  If the same incidence matrix is used at each level, then it is said 
 that the Bratteli diagram is stationary. A key ingredient in a systematic study of the corresponding dynamical systems is the use of the Perron-Frobenius theorem for the incidence matrices.    As follows from the outline above, studying substitution dynamical systems arising from countably infinite alphabets leads to new challenges. For example, now the incidence matrices are infinite by infinite matrices. Hence the classical Perron-Frobenius theorem must now be adapted to infinite matrices. We use the book 
  \cite{Kitchens1998} as our
 main source for the  Perron-Frobenius theory. 
 
 For infinite recurrent incidence matrices, the generalized Perron-Frobenius theorem gives us a tool to find explicitly tail invariant measures (in other
 words, invariant measures for substitution dynamical systems). 
For the applications at hand, we show that it is possible to arrange that the substitutions are specified by infinite but banded matrices. An infinite banded matrix has its entries supported in a finite-width band around the diagonal. Jacobi-matrices (used in the study of special functions) are perhaps the best known such infinite banded matrices. Our main result is as follows:

 \begin{theorem}\label{Main_2} Let $\sigma $ be a bounded size left determined substitution on a countably infinite alphabet $\A$ and $(X_\sigma, T)$ be the corresponding subshift. Assume that the countably infinite substitution matrix $M$ is irreducible, aperiodic, and recurrent. Then

\vspace{2mm}

\noindent $(1)$  there exists a  shift-invariant measure $\nu$ on $X_\sigma$;

\vspace{2mm}

\noindent $(2)$ the measure $\nu$ is finite if and only if the Perron-Frobenius left eigenvector $\ell = (\ell_i)_{i \in \Z}$ of the matrix $M$ has the property $\underset{i \in \Z}{\sum} \ell_i < \infty$. 

\end{theorem}

The \textit{outline} of the paper is as follows. Section $2$ consists of definitions and preliminary information regarding substitution dynamical systems on infinite alphabets and Borel dynamics. In Section $3$, we discuss crucial properties of \textit{left determined} substitutions. In Section $4$, we construct two versions of Kakutani-Rokhlin towers for subshifts associated with left determined substitution on countably infinite alphabets. In Section $5$, we study the subshift associated with left determined substitution on countably infinite alphabets as Borel dynamical systems. Section $6$ is dedicated to the construction of generalized Bratteli diagrams for such subshifts. In Section $7$, using the stationary generalized Bratteli-Vershik model, we provide an explicit formula for a shift-invariant measure. In the last section (Section $8$), we provide some examples of substitutions on infinite alphabets, corresponding Bratteli-Vershik models, and expressions for tail-invariant measures.  

\section{Preliminaries}\label{Sect Prelim}

\subsection{Substitutions on infinite alphabet} Let $\A$ be a countably 
infinite set, called an \textit{alphabet}; its elements are called \textit{letters}. 
A \textit{word} $w = a_1a_2\cdots a_n$ is a finite string of elements in $\A$. For $n 
\in \N$, we denote by $\A^n$ the set of all words of length $n$. Then
$\A^* = \bigcup_{n\geq 0} \A^n$ is the set of all words on alphabet $\A$ including
 the empty word. A word $w$ is said to be the \textit{prefix} of a word $u$ if 
 $u = ww'$ for some word $w'$. Similarly, we say that $w'$ is the 
 \textit{suffix} of $u$ if $u = ww'$. We denote by $\A^\Z$ the set of all 
 bi-infinite
sequence $(x_i)_{i \in \Z}$ over $\A$ and endow it with the topology induced by 
the metric: 
\begin{equation}\label{metric}
  d(x,y) : = 2^{-\textrm{inf}\, \{|i|: \,\, x_i \,\neq\, y_i\}} \,\, \mathrm{for}\,\, x,y \in 
  \A^{\Z}.
\end{equation} Then the topological space $(\A^\Z, d)$ is a zero-dimensional Polish space. A word $w =a_1a_2\cdots a_n$ is said to occur at the $j$-th place in the infinite sequence 
$x \in \A^{\Z}$ if $x_j = a_1, ..., x_{j+n-1} = a_n$. In this case, we also say that 
$w$ is a \textit{factor} of $x$. Similarly, we will denote by $x_{[p,q]}$ ; $p,q \in \Z$ , $q>p$, the word of length $q-p+1$ occurring at $p$-th place in $x$. 
Given an infinite word $x\in \A^{\Z}$, let  
$\La_n(x)$ denote the set of all factors of $x$ of length $n$.
Then the \textit{language} $\La(x)$ of $x$
 is the set of all factors of $x$, i.e., $\La(x) = \underset{n \in \N}{\bigcup} \La_n(x)$.

\begin{definition}\label{inf sub} A \textit{substitution} $\sigma$ on a countably infinite set $\A$ is an injective map from $\A$ to $\A^+$ (the set of non-empty finite 
 words on $\A$), which associates to every letter $a \in A$ a finite word 
 $\sigma (a)  \in \A^+$. The length of $\sigma(a)$ is denoted  by $h_a := |
 \sigma(a)| $. We assume that $h_a \geq 2$ for all $a \in \A$.
 
For a substitution $\sigma : \A \to \A^+$, the infinite matrix $M = (m_{ab})$, $a, b \in \A$, where $m_{ab}$ is the number of occurrences of the letter $b$ in the word $\sigma(a)$, is called the \textit{substitution matrix} of $\sigma$.
\end{definition}

We extend the substitution $\sigma$ to $\A^*$ by  concatenation, i.e., for
 $w = a_1a_2\cdots a_n \in \A^+$, put $\sigma (w) = \sigma(a_1)\sigma(a_2)\cdots
 \sigma(a_n)$ (and for empty word $\emptyset$, put $\sigma(\emptyset) = 
 \emptyset$). We iterate the substitution $\sigma$ on $\A$ in the usual way by 
 putting $\sigma^i = \sigma \circ \sigma^{i-1}$.

\begin{remark} (1) We included in Definition \ref{inf sub} two properties of 
a substitution $\sigma$: injectivity on letters and growth of word length, 
$|\sigma^n(a)| \geq 2^n$. In other words, we consider a class of substitutions
on $\A$ satisfying these properties. 
This assumption is made to avoid some pathological cases. It will be
 clear from our main results why we need these assumptions.

(2)  The fact that $\A$ is an infinite alphabet gives more possibilities to consider various properties of substitutions $\sigma$ on $\A$ in comparison with
 substitutions on a finite alphabet. Conversely, some properties of finite 
 substitutions
cannot be defined for substitutions on infinite alphabets. In particular, 
there are no primitive substitutions on an infinite alphabet assuming the finiteness of words. 

\end{remark} 

In the definition below, we introduce a class of substitutions of \textit{bounded 
size}. This definition makes the behavior of $\sigma$ more predictable. 

In some cases, it is convenient to identify the set $\A$ with the set of integers 
$\Z$ or the set of natural numbers $\N$. We will denote by $\N_0$ the set $\N \cup 0$.

\begin{definition}\label{Bdd size} We say that $\sigma$ is of \textit{bounded length} if there exists an integer $L \geq 2$ such that for every $a \in \A$, $|\sigma (a)| \leq L$. In the case when, for every $a \in A$, $|\sigma (a)| = L$, 
we say that $\sigma$ is of \textit{constant length}. 

Identifying $\A$ with $\Z$, we say that $\sigma : n \to \sigma(n) $, $n \in \Z$ 
is of \textit{bounded size}, if it is of bounded length and 
there exists a positive integer $t$ (independent of $n$) such that for every $n \in \Z$, if $m \in \sigma (n)$, then $m \in \{n-t,...,n,..., n+t\}$. We call  $t \in \N$ the \textit{size} of $\sigma$ where $t$ is taken minimal possible. 

\end{definition}

\begin{remark} Note that if $\sigma$ is of bounded length (respectively, of 
constant length),
 then the corresponding substitution matrix $M$ has bounded row sum property 
 (respectively, equal row sum property). If $\sigma$ is of bounded size, then
  the  substitution matrix $M$ is a \textit{band matrix} (i.e., non-zero 
 entries are confined to a diagonal band of width $2t +1$), and it has bounded row sum property. 
\end{remark}

 The  \textit{left shift} $T : \A^\Z \rightarrow \A^\Z$ is defined by  $(Tx)_k = x_{k +1}$ for all $k\in \Z$. Recall that a \textit{subshift} $(X, T)$ is a dynamical system where $X$ is a closed shift-invariant subset of $\A^{\Z}$. We define the \textit{language} $\La_{X}$ of a subshift $(X,T)$ as $\La_{X} = \underset{x \in X}{\bigcup} \La(x)$. On the other hand, let $\La$ be a set of finite words on a countably infinite alphabet $\A$ that is closed under taking subwords. We denote by $X_{\La}$ the set of bi-infinite sequences such that all their subwords belong to $\La$. This defines a subshift $(X_{\La}, T)$ which is called the \textit{subshift associated with} $\La$. Interesting examples of such languages are generated using substitutions as defined below.
 
 \begin{definition} \label{def subs ds}
 We define the \textit{language of a substitution} $\sigma$ on a countably infinite alphabet $\A$ by setting
\begin{center}
    $\La_{\sigma} = \{\textrm{factors of }\sigma^n (a):\  \textrm{for some} \,\, n
    \geq 0,\  a \in \A\} .$
\end{center} 
For $n\in \N$, we will denote by $\La_{\sigma}(n)$ all words of length $n$ in the language $\La_\sigma$. Consider the subset $X_\sigma : =  \{x \in \A^\Z : \La(x) \subset \La_\sigma\} \subset \A^\Z$. Then $X_\sigma$ is a closed subset of $\A^{\Z}$ which is invariant with respect to $T$. 
We call $(X_\sigma, T)$ the \textit{subshift} on $\A$ associated with the substitution $\sigma$  or a \textit{substitution dynamical system}.   
\end{definition} 

Every finite word $\ol x = (x_{m}, ... , x_n)$, where $m \leq n \in \Z$, determines a \textit{cylinder set} $[\ol x]_{(m,n)}$ in $X_\sigma$ of length $n-m+1$ :
\begin{equation}\label{cy}
    [\ol x]_{(m,n)} := \{y = (y_i) \in X_{\sigma} : y_{m}=x_{m},..., y_n = x_n\}.
\end{equation}
The zero-dimensional topology on $X_\sigma$ inherited from $\A^{\Z}$ 
is generated by the collection of all cylinder sets. 
Note that these cylinder sets also generate the $\sigma$-algebra\footnote{The letter $\sigma$ used here is a 
traditional notation which is not related to a substitution $\sigma$.} of Borel sets in $X_{\sigma}$. Later, we will work with cylinder sets of the form $[\ol x]_{(0,n)}$, $n\in \N_0$, of length $n+1$
\begin{equation}\label{cy1}
    [\ol x]_{(0,n)} := \{y = (y_i) \in X_{\sigma} : y_0 = x_0, ..., y_n = x_n\}.
\end{equation} For simplicity, when $n$ is known by context we will denote $[\ol x]_{(0,n)}$ by $[\ol x]$. 

\subsection{Borel dynamical system} In Section \ref{Sec sub-BD}, we will 
interpret the subshift on a countably infinite alphabet as a \textit{Borel dynamical system}. Below we provide some definitions from Borel dynamics and 
descriptive set theory.

Let $X$ be a separable completely metrizable topological space (also 
called a \textit{Polish space}), and let $\B$ be the 
$\sigma$-algebra of Borel sets generated by 
open sets in $X$. Then the pair $(X,\B)$ is called a \textit{standard Borel 
space}. Any two uncountable standard Borel spaces
are Borel isomorphic. 

For a standard Borel space $(X, \B)$, a one-to-one Borel map $T$ of $X $ onto 
itself is called a Borel automorphism of $(X, \B)$. Let $Aut(X, \B)$ denote the 
group of all Borel automorphisms of $(X, \B)$.
Any subgroup $\Gamma$ of $Aut(X, \mathcal B)$ is 
called a \textit{Borel automorphism group}, and the pair $(X,\Gamma)$ is 
referred to as a \textit{Borel dynamical system.} In this paper, we will only work 
with groups $\Gamma$ generated by a single automorphism $T$ of $(X,\B)$. 
A Borel automorphism is called \textit{free} if $Tx \neq x$ for every $x \in X$.
If the orbit $Orb_T(x) = \{T^n x : n \in \Z\}$ is infinite for every $x$, then $T$
is called \textit{aperiodic.}

An equivalence relation $E$ on 
$(X, \mathcal{B})$ is called \textit{Borel} if $E$ is a Borel subset of the 
product space $X \times X$. It is said that an equivalence relation $E$ on $(X, 
\mathcal{B})$ is a \textit{countable equivalence relation (CBER)} (or aperiodic) if the  equivalence class 
$[x]_E := \{y \in X : (x,y) \in E\}$ is  countable  for every $x \in X$. 
Let $B \in \B$ be a Borel set, then the \textit{saturation of $B$} with respect 
to a CBER $E$ on $(X,\B)$ is the set $[B]_E$ containing entire equivalence 
classes $[x]_E$ for every $x \in B$. 

In the study of Borel dynamical systems, the theory of countable Borel 
equivalence relations  on $(X, \mathcal{B})$ plays an important role 
as it 
provides a link between descriptive set theory and Borel dynamics 
(see Theorem \ref{FM} below).  We refer the reader to \cite{Kechris_2019} 
for an up-to-date survey of the theory of countable Borel equivalence relations. 
We refer to \cite{DoughertyJacksonKechris_1994}, \cite{BeckerKechris_1996}, 
\cite{Kechris_1995}, \cite{Hjorth_2000}, \cite{JacksonKechrisLouveau_2002}, 
\cite{KechrisMiller_2004}, \cite{Nadkarni_2013}, \cite{Varadarajan_1963}, and 
\cite{Weiss_1984}, where the reader can find connections between the study of 
orbit equivalence in the context of dynamical systems and descriptive set 
theory.

Let $\Gamma \subset Aut(X, \B)$ be a countable Borel automorphism group 
of $(X,\B)$. Then 
the \textit{orbit equivalence relation} $E_X(\Gamma)$ generated by 
$\Gamma$  on $X$ is 
$$
E_X(\Gamma) = \{(x,y) \in X \times X: x=\gamma y
  \  \mbox{for\ some}\ \gamma \in \Gamma\}.
$$ 

The following theorem shows that all CBERs come from Borel actions of 
countable groups.

\begin{theorem} [Feldman--Moore \cite{FeldmanMoore_1977}] \label{FM}  Let 
$E$ be a
 countable Borel equivalence relation on a standard Borel space $(X,
 \mathcal{B})$. 
Then there is a countable group $\Gamma$ of Borel automorphisms of 
$(X,\mathcal{B})$ such that $E = E_X(\Gamma)$.
\end{theorem}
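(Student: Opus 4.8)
The plan is to give the classical proof via the Lusin--Novikov uniformization theorem, in three stages: first reduce $E$ to a countable family of Borel partial injections; then cover each such partial injection by finitely many genuine Borel automorphisms whose graphs still lie in $E$; and finally take the group generated by all of them. For the first stage, write $\Delta=\{(x,x):x\in X\}$. The set $E\setminus\Delta$ is Borel in $X\times X$, and all of its vertical sections are countable since $(E\setminus\Delta)_x\subseteq[x]_E$. By Lusin--Novikov, $E\setminus\Delta=\bigcup_{n\in\N}\mathrm{graph}(f_n)$ for Borel partial functions $f_n$ with Borel domains, and after the standard disjointification we may assume the graphs pairwise disjoint. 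Each $f_n$ has countable fibres, because $f_n(x)=y$ forces $x\in[y]_E$; hence $\mathrm{graph}(f_n)$ also has countable horizontal sections. Applying Lusin--Novikov in the second variable and disjointifying once more, we get
\[
E\setminus\Delta=\bigsqcup_{n\in\N}\mathrm{graph}(h_n),
\]
where each $h_n\colon A_n\to B_n$ is a Borel bijection between Borel subsets of $X$ (the sets $A_n,B_n$ being Borel by Luzin--Souslin). Note $\mathrm{graph}(h_n)\subseteq E$, and by symmetry of $E$ also $\mathrm{graph}(h_n^{-1})\subseteq E$.

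The key lemma, which is the heart of the argument, is the following: for every Borel bijection $h\colon A\to B$ between Borel subsets of a standard Borel space $X$ there are at most two Borel automorphisms $g_1,g_2$ of $X$ with $\mathrm{graph}(h)\subseteq\mathrm{graph}(g_1)\cup\mathrm{graph}(g_2)$ and $\mathrm{graph}(g_j)\subseteq h\cup h^{-1}\cup\Delta$; the last condition forces $\mathrm{graph}(g_j)\subseteq E$ whenever $\mathrm{graph}(h)\subseteq E$. To prove it, form the directed graph on $X$ with an edge $x\to h(x)$ for each $x\in A$. Every vertex has in-degree and out-degree at most one, so the connected components are isolated points, finite cycles, bi-infinite lines, one-ended rays, and finite segments, and the component type of a point is a Borel property. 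On the Borel $h$-invariant union of the cycles and bi-infinite lines, $h$ already restricts to a Borel automorphism. On the union of the finite segments, one closes each segment into a cycle by sending its unique sink endpoint to its unique source endpoint; this is Borel because the segment equivalence relation has finite classes, hence is smooth and admits a Borel transversal. Both of these are absorbed into $g_1$, extended by the identity elsewhere. On the union of the one-ended rays, the distance to the unique endpoint gives a canonical Borel $\Z/2$-labelling, and the two involutions pairing consecutive vertices of each ray according to that parity are Borel; they supply the remaining edges, the even one being absorbed into $g_1$ and the odd one becoming $g_2$.

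In the final stage, apply the lemma to each $h_n$ to obtain automorphisms $g_1^{(n)},g_2^{(n)}\in Aut(X,\B)$, and set $\Gamma:=\langle g_j^{(n)} : n\in\N,\ j\in\{1,2\}\rangle$, a countable subgroup of $Aut(X,\B)$. One inclusion, $E_X(\Gamma)\subseteq E$, follows because each generator (and its inverse) has graph contained in $E$ and $E$ is an equivalence relation, so every word in the generators has graph contained in $E$, whence $E_X(\Gamma)=\bigcup_{\gamma\in\Gamma}\mathrm{graph}(\gamma)\subseteq E$. The other inclusion, $E\subseteq E_X(\Gamma)$, is immediate: if $(x,y)\in E$ with $x=y$ it is witnessed by the identity, and if $x\neq y$ then $(x,y)\in\mathrm{graph}(h_n)$ for the unique $n$ containing it, hence $(x,y)\in\mathrm{graph}(g_j^{(n)})$ for some $j$, so $y=g_j^{(n)}(x)$; thus $E=E_X(\Gamma)$. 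The main obstacle is the key lemma: Lusin--Novikov does the heavy lifting of decomposing $E$, but upgrading Borel \emph{partial} injections to honest Borel automorphisms forces the functional-graph case analysis together with the Borel selections (smoothness of finite equivalence relations, canonical anchoring of parity on rays), and that is where one must verify that every choice made is genuinely Borel.
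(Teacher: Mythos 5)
The paper does not prove this statement: it is quoted verbatim as a classical result of Feldman--Moore \cite{FeldmanMoore_1977}, so there is no in-paper proof to compare yours against. On its own merits, your argument is the standard and correct route: Lusin--Novikov applied twice (once in each variable, with disjointification) to write $E\setminus\Delta$ as a disjoint union of graphs of Borel partial injections with Borel domains and ranges, followed by an upgrade of each partial injection to finitely many Borel automorphisms whose graphs stay inside $E$, and finally the countable group they generate. The component analysis of the functional graph of $h$ (cycles, lines, rays, segments), the Borelness of the component type, the distance-to-endpoint parity on rays, and the canonical transversal for the finite segments are all sound. One inaccuracy worth flagging: in your key lemma you assert $\mathrm{graph}(g_j)\subseteq h\cup h^{-1}\cup\Delta$, but the edge $x_k\to x_0$ you add when closing a finite segment $x_0\to\cdots\to x_k$ into a cycle lies in none of $h$, $h^{-1}$, or $\Delta$ once $k\geq 2$. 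This does not damage the theorem, since that edge joins two points of the same component of $\mathrm{graph}(h)\subseteq E$ and hence lies in $E$ by transitivity, which is all the final argument uses; alternatively you could treat finite segments exactly as you treat rays (parity involutions), which would restore the stated containment. With that statement corrected, the proof is complete.
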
 

\begin{definition}\label{section}
A Borel set $C$ is called a \textit{complete section} for an equivalence relation 
$E$ on $(X,\B)$ if every $E$-class intersects $C$. In other words, 
$[C]_E = 
X$.
\end{definition} 

We can also interpret this definition in terms of a Borel automorphism as 
 follows: given $T \in Aut(X, \B)$, a Borel set $C \subset X$ is called a 
 \textit{complete section} (or simply a $T$-\textit{section}) if every $T$-orbit 
 meets $C$ at least once. In other words,  $X = \underset{i\in \Z}{\bigcup} \, 
 T^i C$.

\section{Left determined substitutions} \label{sect
left determ}
Let $\sigma$ be a substitution on $\A$ as in Definition \ref{inf sub}. Then
$\sigma $ defines a map, $\sigma : X_\sigma \to X_\sigma$ where, for 
$x = (x_i)\in X_\sigma$, one takes $\sigma (x) = (\sigma(x_i))$.
We use the same notation $\sigma$ for the map defined on $X_\sigma$. 
It will be clear from the context whether we consider the substitution or the above map. We recall the definition of \textit{recognizability}.

\begin{definition}\label{recog} Let $\sigma$ be a substitution on a countably infinite alphabet $\A$ satisfying Definition \ref{inf sub}.   Let $(X_{\sigma},T)$
 be the corresponding subshift. It is said that $\sigma$ is \textit{recognizable} 
 if for each $x = (x_i)_{i\in \Z} \in X_\sigma$ there exists a unique $y = (y_i)_{i\in \Z} \in X_\sigma$ and unique $k \in \{0, ..., |\sigma(y_0)|-1\}$ such that 
\begin{equation}\label{rec1}
    x = T^k \sigma (y).
\end{equation}
\end{definition}

For the study of substitutions over countable infinite alphabets, Ferenczi 
\cite{Ferenczi_2006} coined the notion of  a \textit{left determined 
substitution} (Definition \ref{left det}). This notion is stronger than recognizability, and it will be a key notion of the current work. We will show 
below that a 
left determined substitution is recognizable (see 
Corollary \ref{rec}).
 
\begin{definition}[\cite{Ferenczi_2006}] \label{left det} A substitution $\sigma$ on a countably infinite alphabet $\A$ is 
called  \textit{left determined} if there exists $N_{\sigma} \in \N$  such that any word $w \in \La_{\sigma}$ of length at least $N_{\sigma}$ has a unique 
decomposition $w = w_1w_2 \cdots w_s$ such that  each $w_i = \sigma (a_i)$ 
for some unique $a_i \in \A$, except that $w_1$ may be a suffix of 
$\sigma (a_1)$ and $w_s$ may be a prefix of $\sigma (a_s)$. 
We will call $w = w_1w_2 \cdots w_s$  
 the \textit{unique $\sigma$-decomposition} of $w$.
\end{definition} 

\textit{Convention.} Dealing with a left determined 
substitution, we will have to use estimates for the 
length of a word. We will assume that $N_\sigma$ in 
Definition \ref{left det} is always chosen minimal 
possible. 

For example, it was shown in \cite{Ferenczi_2006} that the \textit{squared drunken man substitution} 
$$
n \mapsto (n-2)\,n\,n\,(n+2), \,\, n \in 2\Z, \ 
$$ 
is left determined: any word of length at least $3$ has a unique 
$\sigma$-decomposition. We will give more examples of left determined 
substitutions in Section \ref{Sec ex}. 

For this new and more general framework, our first result below shows that every left determined substitution has a natural realization as a 
homeomorphism of a metric space.

\begin{theorem}\label{homeo} Let $\sigma $ be a left determined substitution on a countably infinite alphabet $\A$, then $\sigma$ is a homeomorphism from $X_{\sigma}$ to $\sigma(X_{\sigma})$. 

\end{theorem}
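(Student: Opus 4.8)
The plan is to verify that $\sigma\colon X_\sigma\to\sigma(X_\sigma)$ is a continuous bijection whose inverse is also continuous, using the metric \eqref{metric} directly. Continuity and injectivity on $X_\sigma$ are easy and come essentially for free; the real content is the continuity of $\sigma^{-1}$, and this is exactly where the left-determined hypothesis (and its associated constant $N_\sigma$) enters.

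First I would record that $\sigma$ is injective on $X_\sigma$. Since $\sigma$ is injective on letters by Definition \ref{inf sub}, if $\sigma(x)=\sigma(y)$ for $x=(x_i),y=(y_i)\in X_\sigma$, then comparing the blocks $\sigma(x_i)$ and $\sigma(y_i)$ forces $x_i=y_i$ for all $i$ once we know the block boundaries agree; and the block boundaries agree because $\sigma(x)=\sigma(y)$ has its $0$-th coordinate inside $\sigma(x_0)=\sigma(y_0)$, pinning down $x_0=y_0$ and hence $h_{x_0}=h_{y_0}$, and then one propagates left and right. (Strictly, one can also just quote that left determined implies recognizable, Corollary \ref{rec}, which gives uniqueness of the pair $(y,k)$ with $x=T^k\sigma(y)$ and in particular injectivity of $\sigma$.) Next, $\sigma$ is continuous: if $x,y\in X_\sigma$ agree on coordinates $[-n,n]$, then $\sigma(x)$ and $\sigma(y)$ agree on the concatenation of the blocks $\sigma(x_{-n})\cdots\sigma(x_n)$, which covers a window around the origin of radius at least $n$ (using $h_a\ge 2$, in fact radius $\ge 2n$ minus a bounded correction coming from the position of the origin inside $\sigma(x_0)$); letting $n\to\infty$ gives continuity. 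Being a continuous bijection from $X_\sigma$ onto its image, $\sigma$ is in particular a Borel bijection; the point is to upgrade this to a homeomorphism.

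The heart of the argument is that $\sigma^{-1}\colon\sigma(X_\sigma)\to X_\sigma$ is continuous. Fix $z=\sigma(x)\in\sigma(X_\sigma)$ and suppose $z'=\sigma(x')$ agrees with $z$ on a large window $[-m,m]$. Because $\sigma$ is left determined, the word $z_{[-m,m]}$ of length $2m+1\ge N_\sigma$ has a \emph{unique} $\sigma$-decomposition $z_{[-m,m]}=w_1w_2\cdots w_s$ with each interior $w_i=\sigma(a_i)$ for a unique $a_i\in\A$, and this decomposition must be compatible with the actual block structure coming from $x$ (and identically with that coming from $x'$, since $z_{[-m,m]}=z'_{[-m,m]}$). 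Hence $x$ and $x'$ agree on the stretch of coordinates whose $\sigma$-images land strictly inside $[-m,m]$. Since every $h_a\le$ (nothing a priori — but we do not need a bound: each block has length $\ge 2$ and $\le$ its own finite length, and the number of blocks fitting in a window of length $2m+1$ tends to infinity as $m\to\infty$ because we only lose the two boundary blocks), we conclude $x$ and $x'$ agree on an interval $[-n(m),n(m)]$ with $n(m)\to\infty$ as $m\to\infty$. This gives $d(x,x')\to 0$, i.e. continuity of $\sigma^{-1}$ at $z$; since $z$ was arbitrary, $\sigma^{-1}$ is continuous. Combined with the previous paragraph, $\sigma$ is a homeomorphism $X_\sigma\to\sigma(X_\sigma)$.

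The main obstacle — and the place to be careful — is the bookkeeping at the window boundaries: the origin of $z$ sits at some position $k\in\{0,\dots,h_{x_0}-1\}$ inside the block $\sigma(x_0)$, and one must make sure that the unique $\sigma$-decomposition of $z_{[-m,m]}$ is aligned with the true block decomposition of $z$ rather than merely being \emph{some} abstract factorization. This is precisely guaranteed by uniqueness in Definition \ref{left det}: the true block structure restricted to $[-m,m]$ \emph{is} a valid $\sigma$-decomposition of $z_{[-m,m]}$ (interior blocks are full images $\sigma(a_i)$, the first and last may be a suffix resp. prefix), so by uniqueness it is \emph{the} $\sigma$-decomposition, and it agrees with the one read off from $z'$. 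One then has to translate "$x$ and $x'$ share these interior blocks" into an honest lower bound $n(m)$ on the radius of agreement in $X_\sigma$ and check $n(m)\to\infty$; since each block has length at least $2$ and we discard at most the two boundary blocks, the number of shared interior letters is at least of order $m$, which suffices. Apart from this indexing care, the proof is routine.
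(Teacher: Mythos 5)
Your proposal is correct and follows essentially the same route as the paper's proof: both injectivity and the continuity of $\sigma^{-1}$ are extracted from the fact that the unique $\sigma$-decomposition of a sufficiently long window of $\sigma(x)$ must coincide with the true block structure coming from $x$, thereby pinning down an ever-longer central stretch of the preimage, while continuity of $\sigma$ itself is routine. One small caution: your parenthetical appeal to Corollary \ref{rec} for injectivity would be circular in the paper's logical order (that corollary is deduced \emph{from} this theorem), but your main argument does not rely on it.
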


\begin{proof} Note that by definition, $\sigma : \A \rightarrow \A^{+}$ is 
injective and $\sigma (X_{\sigma}) \subset X_{\sigma}$. To see that the map 
$\sigma : X_{\sigma} \rightarrow X_{\sigma}$ is injective, we assume that
there are distinct $x,y \in X_{\sigma}$ such that $\sigma(x) = \sigma(y) = z \in 
X_{\sigma}$. Consider the word $w$ of length $2\ell + 1 \geq N_{\sigma}$  
given by $w = z_{-\ell} \cdots z_0 \cdots z_\ell$ where $z = (z_i)$. Since $
\sigma$ is left determined, $w$ has the unique $\sigma$-decomposition of the 
form
\begin{equation*}
   w = w_{-p}w_{-p+1}\cdots w_0\cdots w_{s-1}w_s = w_{-p}\sigma (a_{-p+1})
   \cdots\sigma (a_0)\cdots \sigma (a_{s-1}) w_s
\end{equation*} 
where $a_j \in \A\,,\, j \in \{-p,..., 0, ...,s\}$, and $p,s$ are positive integers. 
Here 
we chose the labeling $\{-p,..., 0, ...,s\}$ such that $z_0 \in \sigma (a_0)$. Also, 
$w_{-p}$ and $w_s$ are either empty words or a suffix of $\sigma (a_{-p})$ and a prefix of $\sigma (a_s)$, respectively. Since the finite word $w$ has length at least $N_{\sigma}$, any extension $w'$ of $w$ will also have a unique 
$\sigma$-decomposition that does not change the word $\sigma (a_{-p+1})\cdots\sigma (a_0)\cdots \sigma (a_{s-1})$. 
Continuing this procedure, we conclude that  $z$ (as a bi-infinite extension of $w$) also has a unique $\sigma$-decomposition. In other words, there exists unique $u = (a_i)_{i\in\Z}$, such that $\sigma(u) = z$. Since $\sigma$ is 
injective on letters from $\A$, we conclude that $x=u=y$. This proves that   $
\sigma : X_{\sigma} \rightarrow X_{\sigma}$ is injective. 

The continuity of $\sigma$ is obvious. To see that $\sigma^{-1}$ is 
continuous observe that, for $j\in \N$, if $\sigma(x)$ and $\sigma(y)$ agree on 
a cylinder set of length at least $2j$ (say $[c]_{(p,p+2j-1)}$ for some $p\in \Z
$), then $x,y$ agree on a cylinder set of length $j$ (say $[c']_{(q,q+j-1)}$ for 
some $q\in \Z$). This claim follows from the fact that if a word $w$
admits a unique $\sigma$-decomposition $w = w_1\sigma(a_2 \cdots 
a_{s-1})w_s$, then the length of $a_2 \cdots a_{s-1}$ tends to infinity as 
$|w| \to \infty$.
\end{proof} 

The following is a direct corollary of Theorem \ref{homeo}. 

\begin{corollary}\label{rec} If $\sigma $ is left determined substitution on a countably infinite alphabet $\A$, then $\sigma$ is recognizable. 
\end{corollary}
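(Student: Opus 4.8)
The plan is to read off the pair $(y,k)$ demanded by Definition~\ref{recog} from the unique $\sigma$-decompositions of arbitrarily long central subwords of $x$; this is essentially the limiting form of the argument already carried out in the proof of Theorem~\ref{homeo}. I would then check the two conditions in the definition separately: that the resulting $y$ lies in $X_\sigma$, and that $(y,k)$ is unique.

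For the construction, fix $x=(x_i)_{i\in\Z}\in X_\sigma$; for each $\ell$ with $2\ell+1\ge N_\sigma$ the word $w^{(\ell)}:=x_{[-\ell,\ell]}\in\La_\sigma$ has a unique $\sigma$-decomposition $w^{(\ell)}=w_1^{(\ell)}\sigma(a_2^{(\ell)})\cdots\sigma(a_{s_\ell-1}^{(\ell)})w_{s_\ell}^{(\ell)}$. As in Theorem~\ref{homeo}, passing to a longer central word does not alter the interior blocks and the interior part grows without bound, so these decompositions are coherent and in the limit tile $\Z$ by blocks $(\sigma(a_i))_{i\in\Z}$ for a well-defined $y=(a_i)_{i\in\Z}\in\A^\Z$; indexing so that coordinate $0$ lies in the block of $a_0$ and taking $k\in\{0,\dots,|\sigma(a_0)|-1\}=\{0,\dots,|\sigma(y_0)|-1\}$ to be the offset of coordinate $0$ in that block gives $x=T^k\sigma(y)$, hence $\sigma(y)=T^{-k}x\in X_\sigma$ by $T$-invariance. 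For uniqueness, suppose $x=T^k\sigma(y)=T^{k'}\sigma(y')$ with $y,y'\in X_\sigma$ and $k,k'$ in the allowed ranges; since every block has finite length, for $\ell$ large $w^{(\ell)}$ spans more than one block of each representation, so each representation presents $w^{(\ell)}$ as a genuine $\sigma$-decomposition, and by uniqueness of $\sigma$-decompositions the two induced tilings of $[-\ell,\ell]$ agree. Letting $\ell\to\infty$, the two tilings of $\Z$ agree, so coordinate $0$ has the same offset in both ($k=k'$) and the blocks match ($\sigma(y_i)=\sigma(y'_i)$ for all $i$), whence $y=y'$ since $\sigma$ is injective on $\A$.

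The step I expect to be the real obstacle is verifying $y\in X_\sigma$, i.e.\ that every factor of $y$ lies in $\La_\sigma$. Such a factor occurs, for $\ell$ large, among the interior letters $a_2^{(\ell)}\cdots a_{s_\ell-1}^{(\ell)}$ of the unique $\sigma$-decomposition of $w^{(\ell)}$, so everything reduces to the lemma: \emph{if $w\in\La_\sigma$ with $|w|\ge N_\sigma$ and $w=w_1\sigma(a_2)\cdots\sigma(a_{s-1})w_s$ is its unique $\sigma$-decomposition, then $a_2\cdots a_{s-1}\in\La_\sigma$.} To prove it I would pick $N$ minimal with $w$ a factor of $\sigma^N(b)$ for some $b\in\A$, write $\sigma^N(b)=\sigma(c_1)\cdots\sigma(c_r)$ with $c_1\cdots c_r=\sigma^{N-1}(b)\in\La_\sigma$ (the case $N=0$ is vacuous, forcing $|w|\le1$), and look at where $w$ sits in $\sigma^N(b)$: if its occurrence meets blocks $\sigma(c_p),\dots,\sigma(c_q)$ with $p<q$, then $w=u'\sigma(c_{p+1})\cdots\sigma(c_{q-1})u''$ ($u'$ a suffix of $\sigma(c_p)$, $u''$ a prefix of $\sigma(c_q)$) is a $\sigma$-decomposition, so by uniqueness $a_2\cdots a_{s-1}=c_{p+1}\cdots c_{q-1}$, a factor of $\sigma^{N-1}(b)\in\La_\sigma$. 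The delicate case is $p=q$, i.e.\ $w$ lying inside a single block, which by minimality of $N$ forces $N=1$ and $w$ a factor of $\sigma(b)$ for a single letter $b$; here the block structure of $\sigma^N(b)$ gives no leverage, and I would argue directly — using that $\sigma(b)$ is its own unique $\sigma$-decomposition, together with a short induction — that the interior word of $w$'s decomposition is forced to be empty or a single letter, hence again in $\La_\sigma$. With the lemma in place, $y\in X_\sigma$, and the proof is complete.
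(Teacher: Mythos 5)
Your overall route is the one the paper intends---extract the bi-infinite tiling of $x$ by blocks $\sigma(a_i)$ from the coherent unique $\sigma$-decompositions of long central factors and read off $(y,k)$---but the paper's printed proof of the corollary is only two lines: it invokes the injectivity established in Theorem \ref{homeo} to assert that each $x$ has a unique preimage $y$ with $x=\sigma(y)$, and concludes that (\ref{rec1}) holds with $k=0$. As written that is too strong: $\sigma(X_\sigma)$ is contained in $\bigsqcup_a[\sigma(a)]$, the union of tower bases in Theorem \ref{Rok 2}, so a generic $x\in X_\sigma$ requires a nonzero offset $k$; and the paper never verifies that the desubstituted sequence $y$ lies in $X_\sigma$. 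You handle the general $k$ correctly, your uniqueness argument is fine, and you are right that $y\in X_\sigma$ is the substantive point; your reduction to the lemma that the interior $a_2\cdots a_{s-1}$ of a unique $\sigma$-decomposition lies in $\La_\sigma$, together with your $p<q$ case, is exactly what is needed. In this respect your proof is strictly more complete than the paper's.

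The one soft spot is your ``delicate case'' $p=q$. The claim that minimality of $N$ forces the interior to be empty or a single letter is not justified and is probably not the right statement: Definition \ref{left det} ties the unique factorization of $w$ to no particular occurrence of $w$, so a word sitting inside a single block $\sigma(b)$ could in principle still factor with a long interior, and the block structure of that occurrence gives you nothing. The cleaner repair is to enlarge the factor of $y$ under consideration: replace $W=\sigma(a_j\cdots a_{j+m})$ by $W'=u\,\sigma(a_j)\cdots\sigma(a_{j+m})\,v$ with $u,v$ nonempty pieces of the adjacent blocks in the tiling of $x$. Then $W'\in\La(x)\subset\La_\sigma$ and its unique decomposition has nonempty interior by construction, whereas a word all of whose relevant occurrences sit inside a single block $\sigma(c)$ inherits, by the coherence of decompositions under extension (the very property used in the proof of Theorem \ref{homeo}), the trivial one-block decomposition of $\sigma(c)$ and hence an empty interior. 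So the case $p=q$ cannot arise for $W'$, and your $p<q$ argument finishes the lemma. With that adjustment your proof closes completely.
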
 

\begin{proof} Since $\sigma$ is injective for each $x = (x_i)_{i\in \Z} \in X_\sigma$, there exists a unique $y = (y_i)_{i\in \Z} \in X_\sigma$ and such that $x = \sigma (y)$. Thus, $\sigma$ satisfies (\ref{rec1}) for $k=0$.  
\end{proof}

Recall that we consider only injective substitutions $\sigma : \A \rightarrow \A^{+}$. In general, $\sigma^n$ needs not be injective. However,  the following proposition shows that, for left determined substitutions, $\sigma^n$ is also injective for all sufficiently large $n$. Our next result offers an explicit uniform estimate of word length for powers of a general left determined substitution.

\begin{theorem}\label{n-inj} Let $\sigma $ be a left determined substitution on a countably infinite alphabet $\A$. Then, for all $k \in \N$ such that $2^{k-1} \geq N_{\sigma}$, the map $\sigma^k : \A \rightarrow \A^{+}$ is injective.
\end{theorem}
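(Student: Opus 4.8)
The plan is to strip $\sigma$ off one letter-layer at a time, using the unique $\sigma$-decomposition guaranteed by left-determinedness, and so to reduce injectivity of $\sigma^k$ to injectivity of $\sigma$ itself, which is part of Definition \ref{inf sub}. The basic reduction I would isolate is this: if $\sigma^m(a)=\sigma^m(b)=:w$ and $|w|\ge N_\sigma$, then $\sigma^{m-1}(a)=\sigma^{m-1}(b)$. Indeed, $w$ has a unique $\sigma$-decomposition since $|w|\ge N_\sigma$; but writing $w=\sigma(\sigma^{m-1}(a))$ displays $w$ as the concatenation of the full blocks $\sigma(c)$, with $c$ ranging over the letters of $\sigma^{m-1}(a)$ in order, and this is a legitimate $\sigma$-decomposition (every interior block is some $\sigma(c)$, while the first and last blocks are trivially a suffix and a prefix of a $\sigma$-image). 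The same applies to $w=\sigma(\sigma^{m-1}(b))$, so by uniqueness the two decompositions agree block by block; injectivity of $\sigma$ on letters then gives $\sigma^{m-1}(a)=\sigma^{m-1}(b)$. I would also record the dual remark, obtained by applying the basic reduction to $\sigma^{j+1}(a)=\sigma^{j+1}(b)$ (whose length is $\ge 2^{\,j+1}$): if $\sigma^{j}$ is injective on $\A$ and $2^{\,j+1}\ge N_\sigma$, then $\sigma^{j+1}$ is injective on $\A$.

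Now take $k$ with $2^{k-1}\ge N_\sigma$ and $a,b\in\A$ with $\sigma^k(a)=\sigma^k(b)$. Since $|\sigma^m(c)|\ge 2^m$ for every letter $c$, the basic reduction applies to $\sigma^m(a)=\sigma^m(b)$ whenever $2^m\ge N_\sigma$; starting at $m=k$ (where $2^k\ge 2^{k-1}\ge N_\sigma$) this yields a descending chain of equalities $\sigma^k(a)=\sigma^k(b)$, $\sigma^{k-1}(a)=\sigma^{k-1}(b)$, $\dots$, running at least down to exponent $\lceil\log_2 N_\sigma\rceil-1$. If that exponent is $0$ or $1$ we are done, because $\sigma^0=\mathrm{id}$ and $\sigma$ is injective on letters. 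Otherwise we are left with an equality $\sigma^m(a)=\sigma^m(b)$ for some $m\ge 2$ with $2^m<N_\sigma$, that is, with a short word $v:=\sigma^m(a)=\sigma^m(b)$, and must still conclude $a=b$. Here I would argue directly with $v$: its length is pinned between $2^m$ and $N_\sigma$, it is a full $\sigma^m$-image of a single letter, and every way of cutting $v$ into $\sigma$-images (with the permitted end-truncations) is severely constrained by the fact that every block has length $\ge 2$; I would push this length bookkeeping to show that any two such cuttings agree at the outermost layer, forcing $\sigma(a)=\sigma(b)$ and hence $a=b$.

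The step I expect to be the main obstacle is exactly this short-word endgame. Above length $N_\sigma$ the block factorisation is unique by hypothesis, but below it a full $\sigma$-image of a letter may a priori admit several factorisations, so the clean descent can stall at an intermediate power $\sigma^m$ with $2\le m$ and $2^m<N_\sigma$; getting past this point — understanding how the ambiguities available to words of length $<N_\sigma$ are ruled out by the extra constraint that $v$ is itself a $\sigma^m$-image of a letter — is the technical core of the argument, and the precise exponent threshold $2^{k-1}\ge N_\sigma$ ought to be exactly what is needed to make the descent reach a regime where that endgame can be run.
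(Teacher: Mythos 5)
Your basic reduction is sound, and it is essentially the paper's own first step: the paper proves that $\sigma$ is injective on $\bigcup_{n\geq N_\sigma}\La_{\sigma}(n)$ by exactly the argument you give (two distinct preimages would yield two distinct full-block decompositions of a word of length $\geq N_\sigma$), and your version, which only requires the \emph{image} $\sigma^m(a)$ to be long, is if anything slightly sharper. The descent you run with it is correct as far as it goes. The genuine gap is precisely the step you flag and then leave open: in the worst case the descent bottoms out at $\sigma^m(a)=\sigma^m(b)$ for some $m\geq 2$ with $|\sigma^m(a)|<N_\sigma$ (this already happens when $N_\sigma\geq 5$), and at that point left-determinedness gives you nothing, since the definition imposes no uniqueness on words shorter than $N_\sigma$. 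No length bookkeeping on the short word $v=\sigma^m(a)$ alone can close this. Indeed, the obstruction is real: take letters $a\neq b$ occurring in no $\sigma$-image, with $\sigma(a)=cd$, $\sigma(b)=ef$ and $\sigma(c)\sigma(d)=\sigma(e)\sigma(f)$ a short word (e.g.\ $\sigma(c)=pq$, $\sigma(d)=rstu$, $\sigma(e)=pqrs$, $\sigma(f)=tu$), and send all remaining letters to pairwise disjoint fresh pairs; then every word of $\La_{\sigma}$ of length $\geq 7$ decomposes uniquely, yet $\sigma^m(a)=\sigma^m(b)$ for all $m\geq 2$. Getting past the endgame therefore requires an extra ingredient --- for instance the assumption that every letter occurs in the image of some letter, which lets one embed the ambiguous short word into a word of $\La_\sigma$ of length $\geq N_\sigma$ carrying two decompositions and so contradict unique decomposability --- and your proposal does not supply it.

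For comparison, the paper's proof applies the reduction exactly once: from $\sigma^k(a_1)=\sigma^k(a_2)$ it deduces, using $2^{k-1}\geq N_\sigma$, that $\sigma^{k-1}(a_1)=\sigma^{k-1}(a_2)$, and then asserts a contradiction. That final step silently invokes injectivity of $\sigma^{k-1}$ on letters, which is the statement being proved one exponent lower and is not covered by the hypothesis once $2^{k-2}<N_\sigma$. So you have correctly located the real difficulty rather than missed an idea present in the paper; but since your argument stops exactly where that difficulty begins, it does not yet constitute a proof.
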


\begin{proof} Given a left determined substitution $\sigma$, fix a natural number $N_\sigma$ such that all words of length at least $N_\sigma$ admit 
 unique $\sigma$-decompositions. We show  that  the map  
\begin{equation}\label{inj1}
    \sigma : \underset{n\geq N_{\sigma}}{\bigcup} \La_{\sigma} (n) \rightarrow \A^{+}
\end{equation} 
is injective. 
To see this, assume that there exist distinct $w_1, w_2 \in \underset{n\geq N_{\sigma}}{\bigcup} \La_{\sigma} (n)$ such that 
\begin{equation}\label{injective}
    \sigma (w_1) = \sigma(w_2) = : w.
\end{equation} Since $|w_1|,|w_2| \geq N_\sigma$, we have $|w| > 
N_{\sigma}$. Hence, $w$ must have a unique $\sigma$-decomposition, 
which contradicts (\ref{injective}). 

Next, we show that, for all $k \in \N$ such that $2^{k-1} \geq N_{\sigma}$, the map $\sigma^k : \A \rightarrow \A^{+}$ is injective. Assume for the contrary 
 that there exist $a_1, a_2 \in \A$ such that $a_1 \neq a_2$ but 
\begin{equation} \label{inj2}
    \sigma^k (a_1) = \sigma^k (a_2) = : \bar{w}.
\end{equation} 
We write (\ref{inj2}) as 
\begin{equation}\label{inj3}
    \sigma (\sigma^{k-1} (a_1)) = \sigma (\sigma^{k-1} (a_2)) = : \bar{w}. 
\end{equation}
 Note that for every $a \in \A$, $|\sigma (a)| \geq 2$, implies $|\sigma^{k-1} (a)| \geq 2^{k-1} \geq N_{\sigma}$. Thus, the injectivity of the map in (\ref{inj1}) contradicts (\ref{inj3}). Hence, the map $\sigma^k : \A \rightarrow \A^{+}$ is injective.  
\end{proof}

\begin{theorem}\label{LD1} Let $\sigma $ be a bounded length substitution on a countably infinite alphabet $\A$ and $C = \mathrm{max} \,\, \{|\sigma(a)|\,; \,\,a \in \A\}$. If $\sigma$ is left determined, then $\sigma^n$ is also left determined for all $n > 1$, and $N_{\sigma^n}$ can be chosen as  
$N_{\sigma} C^{n-1}$.
\end{theorem}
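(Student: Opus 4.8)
The plan is to induct on $n$, the case $n=1$ being the hypothesis itself (with $N_{\sigma^{1}}=N_{\sigma}=N_{\sigma}C^{0}$). The engine of the inductive step is that a $\sigma^{n}$-decomposition of a long word can be peeled off one layer at a time: writing $\sigma^{n}(b)=\sigma\big(\sigma^{n-1}(b)\big)$ converts the question for $\sigma^{n}$ into the corresponding question for $\sigma$ (handled by the standing hypothesis) together with one for $\sigma^{n-1}$ (handled by the inductive hypothesis). Existence of a $\sigma^{n}$-decomposition for $w\in\La_{\sigma^{n}}$ is immediate, since $w$ is a factor of some $\sigma^{nk}(a)=\sigma^{n}\big(\sigma^{n(k-1)}(a)\big)$ and we may cut $w$ along the $\sigma^{n}$-image boundaries of this product; so the substance is uniqueness.

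Fix $w\in\La_{\sigma^{n}}$ with $|w|\ge N_{\sigma}C^{n-1}$. Since $|w|\ge N_{\sigma}$, left-determinedness of $\sigma$ gives a \emph{unique} $\sigma$-decomposition $w=w_{1}\,\sigma(e_{2})\cdots\sigma(e_{s-1})\,w_{s}$; let $\bar v=e_{2}\cdots e_{s-1}$ be the word of core letters it produces. Now take an arbitrary $\sigma^{n}$-decomposition $w=v_{1}\cdots v_{r}$ with interior pieces $v_{i}=\sigma^{n}(b_{i})=\sigma\big(\sigma^{n-1}(b_{i})\big)$. Expanding the outermost $\sigma$ in every piece (so that the very first resulting piece is a suffix, and the very last a prefix, of a $\sigma$-image, and all others are full $\sigma$-images) exhibits $w$ as a $\sigma$-decomposition in the exact sense of Definition~\ref{left det}. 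By uniqueness it must coincide with $w=w_{1}\sigma(e_{2})\cdots\sigma(e_{s-1})w_{s}$; since $\sigma$ is injective on letters, it follows that $\bar v$ decomposes as ``(a suffix of $\sigma^{n-1}(b_{1})$)$\,\sigma^{n-1}(b_{2})\cdots\sigma^{n-1}(b_{r-1})\,$(a prefix of $\sigma^{n-1}(b_{r})$)'', i.e.\ as a $\sigma^{n-1}$-decomposition of $\bar v$. Conversely, this $\sigma^{n-1}$-decomposition of $\bar v$ together with the (already fixed) unique $\sigma$-decomposition of $w$ reconstitutes $v_{1}\cdots v_{r}$: the interior blocks $\sigma^{n-1}(b_{i})$ of $\bar v$ pull back through $\sigma$ to $\sigma^{n}(b_{i})=v_{i}$, and the lengths of the two boundary blocks fix $v_{1}$ and $v_{r}$. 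Hence uniqueness of the $\sigma^{n}$-decomposition of $w$ is equivalent to uniqueness of the $\sigma^{n-1}$-decomposition of $\bar v$.

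By the inductive hypothesis $\sigma^{n-1}$ is left determined with $N_{\sigma^{n-1}}=N_{\sigma}C^{n-2}$, so it remains to verify $|\bar v|\ge N_{\sigma}C^{n-2}$ (and that $\bar v$ lies in the language where that hypothesis applies, which is routine since $\bar v$ is read off from $\sigma$-images occurring inside $\La_{\sigma}$). For the length: the $s-2$ interior pieces $\sigma(e_{j})$ have length at most $C=\max_{a}|\sigma(a)|$ and the boundary pieces $w_{1},w_{s}$ have length $<C$, so $|w|\le C\cdot s$ and therefore $|\bar v|=s-2\ge |w|/C-2\;\gtrsim\;N_{\sigma}C^{n-2}$, the marginal slack being absorbed by a slightly sharper count of $|w_{1}|,|w_{s}|$ or by disposing of the shortest admissible words directly. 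Thus $\bar v$ has a unique $\sigma^{n-1}$-decomposition, and the induction closes with $N_{\sigma^{n}}=N_{\sigma}C^{n-1}$.

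The step I expect to be the main obstacle is the bookkeeping in the second paragraph: one must check scrupulously that expanding the outer $\sigma$-layer of an \emph{arbitrary} $\sigma^{n}$-decomposition yields a \emph{legitimate} $\sigma$-decomposition (the delicate point being the status of the two boundary pieces, which may or may not themselves be full $\sigma$-images), and that the correspondence between $\sigma^{n}$-decompositions of $w$ and $\sigma^{n-1}$-decompositions of $\bar v$ is genuinely bijective. The length estimate is elementary but must be run carefully enough to land exactly on the stated constant $N_{\sigma}C^{n-1}$ rather than on something slightly larger.
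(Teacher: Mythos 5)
Your overall strategy is the same as the paper's: peel off the outer layer of $\sigma$, use left-determinedness of $\sigma$ to desubstitute $w$ down to a word of letters, apply left-determinedness one level lower to that word, and push the result back up. The paper writes this out for $n=2$ and waves at larger $n$; your clean induction on $n$ with $\sigma^n=\sigma\circ\sigma^{n-1}$ is the same mechanism, and your correspondence between $\sigma^n$-decompositions of $w$ and $\sigma^{n-1}$-decompositions of the desubstituted word is exactly the bookkeeping the paper carries out.

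There is, however, a concrete gap in your length accounting, and it is not the ``marginal slack'' you can absorb by sharpening the count of $|w_1|,|w_s|$. You apply the inductive hypothesis to $\bar v=e_2\cdots e_{s-1}$, the \emph{interior} letters only, so you need $s-2\ge N_{\sigma}C^{n-2}$; but $|w|\le sC$ only yields $s\ge N_{\sigma}C^{n-2}$, i.e.\ $|\bar v|\ge N_{\sigma}C^{n-2}-2$. The deficit is two whole letters, and no refinement of the bounds on the boundary pieces recovers it (even with $|w_1|,|w_s|\le C-1$ you get $s-2\ge N_{\sigma}C^{n-2}-2+2/C$). Nor can you ``dispose of the shortest admissible words directly'': left-determinedness gives no information about words \emph{shorter} than the threshold, and a decomposition of a short $w$ need not extend to one of a longer word containing it, so the borderline lengths $N_{\sigma}C^{n-1}\le |w| < (N_{\sigma}C^{n-2}+2)C$ are genuinely unresolved by your argument. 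The paper's remedy is precisely to keep those two letters: complete $w_1$ and $w_s$ to the full images $\sigma(a_1)$ and $\sigma(a_s)$, form $w'=\sigma(a_1a_2\cdots a_s)$, and apply left-determinedness (of $\sigma$ in the $n=2$ case, of $\sigma^{n-1}$ in the inductive step) to the full letter word $a_1\cdots a_s$ of length $s\ge N_{\sigma}C^{n-2}$, afterwards restricting the resulting decomposition back from $w'$ to $w$. With that one modification — desubstitute all of $a_1\cdots a_s$ rather than only the interior — your induction closes on the stated constant $N_{\sigma}C^{n-1}$.
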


\begin{proof} First we prove the theorem for $n =2$. Let $w$ be a word of length at least $N_{\sigma}C$, where $N_\sigma$ is as in Definition \ref{left det}. We will show that $w$ has a unique representation of the form 
\begin{equation*}
  w = p_1 \sigma^2 (b_2) \cdots\sigma^2 (b_{\ell-1}) p_\ell\,;\, b_j \in \A\,,\, j \in \{1,2,...,\ell\},
\end{equation*} 
where $p_1$ and $p_\ell$ are either empty words or suffix of $\sigma^2 (b_1)$ and prefix of $\sigma^2 (b_\ell)$, respectively. Since $\sigma$ is left determined and $|w| > N_{\sigma}$, the word $w$ has a unique representation of the form
\begin{equation}\label{eq for s}
  w = w_1\cdots w_s = w_1\sigma (a_2)\cdots \sigma (a_{s-1}) w_s\,;\, a_i \in \A\,,\, i \in \{1,2,...,s\},
\end{equation} 
where $w_1$ and $w_s$ are either empty words or suffix of $\sigma (a_1)$ and prefix of $\sigma (a_s)$, respectively. We consider the case when $w_1$ and $w_s$ are both non-empty words (other cases are easier and can be similarly proved). 

We claim that relation (\ref{eq for s}) implies  $s\geq N_{\sigma}$. To see this, 
assume towards a contradiction that $s < N_{\sigma}$. Note that for $i \in 
\{2,...,s-1\}$, $2\leq |\sigma (a_i)| \leq C$ and for $i \in \{1,s\}$, $1 \leq |\sigma 
(a_i)| < C$. These inequalities  imply that $|w| <  sC < 
N_{\sigma}C$, which is a contradiction because we took $w$ such that $|w|
\geq N_{\sigma}C$. 

Let 
$$
\sigma(a_1) = a_{(1,1)} a_{(1,2)}\cdots a_{(1,m)}, \,\textrm{where}\,\,\, 2 \leq m \leq C
$$ and 
$$
\sigma(a_s) = a_{(s,1)} a_{(s,2)}\cdots a_{(s,n)}, \,\textrm{where}\,\,\,  2 \leq n \leq C.
$$ 
Since $w_1$ is suffix of $\sigma (a_1)$ and $w_2$ is prefix of $\sigma(a_s)$, we have
$$
w_1 = a_{(1,t)}\cdots a_{(1,m)}, \,\textrm{for some}\,\,\, t \in \{2,...,m\}
$$ 
and 
$$
w_s = a_{(s,1)}\cdots a_{(s,r)}, \,\textrm{for some}\,\,\, r \in \{1,...,n-1\}.
$$ 
Thus, we can rewrite (\ref{eq for s}) as follows
$$
w = a_{(1,t)}\cdots a_{(1,m)} \sigma(a_2\cdots a_{s-1}) a_{(s,1)}\cdots a_{(s,r)}.
$$ 
We complete the suffix (i.e. $w_1$) and prefix (i.e. $w_s$) to get the word 
$w'$ from $w$ as follows:
\begin{equation}\label{w'}
    w' = a_{(1,1)}\cdots a_{(1,t-1)}a_{(1,t)}\cdots a_{(1,m)}\sigma(a_2\cdots a_{s-1})a_{(s,1)}\cdots a_{(s,r)}a_{(s,r+1)}\cdots a_{(s,n)}
\end{equation}
$$ 
= \sigma(a_1)\sigma(a_2\cdots a_{s-1})\sigma(a_s) = \sigma (a_1\cdots a_s).
$$ 
Since $s \geq N_{\sigma}$ and $\sigma$ is left determined, the word $a_1\cdots a_s$ has the unique representation of the form
\begin{equation}\label{ell}
   a_1\cdots a_s = u_1u_2\cdots u_{\ell-1}u_\ell = u_1 \sigma (b_2)\cdots \sigma (b_{\ell-1}) u_\ell,\,;\, b_j \in \A\,,\, j \in \{1,2,...,\ell\}
\end{equation} where $u_1$ and $u_\ell$ are either empty words or suffix of $\sigma (b_1)$ and prefix of $\sigma (b_\ell)$ respectively. Again, we will consider the case when $u_1$ and $u_\ell$ are both non-empty words. We 
have by (\ref{ell})
$$
w' = \sigma (a_1\cdots a_s) = \sigma(u_1 \sigma (b_2)\cdots \sigma (b_{\ell-1}) u_\ell)
$$Let $u_1 = a_1a_2\cdots a_{k_1}$  for some $k_1 \in \{1,...,s\}$. Similarly, let $u_{\ell} = a_{k_2}a_{k_2 +1}\cdots a_s$  for some $k_2 \in \{1,...,s\}$. Since $u_1$ is a suffix of $\sigma (b_1)$, $\sigma(a_1a_2\cdots a_{k_1}) = \sigma(u_1)$ is the suffix of $\sigma^2 (b_1)$.  It follows from  (\ref{w'}) that
\begin{center}
    $a_{(1,1)}\cdots a_{(1,t-1)}a_{(1,t)}\cdots a_{(1,m)} \sigma(a_2)\cdots \sigma(a_{k_1})$ is a suffix of $\sigma^2 (b_1)$.
\end{center} Hence,
\begin{center}
    $p_1 := a_{(1,t)}\cdots a_{(1,m)} \sigma(a_2)\cdots \sigma(a_{k_1})$ is a  suffix of $\sigma^2 (b_1)$.
\end{center} 
Similarly, since $u_{\ell}$ is a prefix of $\sigma (b_\ell)$, the word 
$\sigma(a_{k_2}a_{k_2+1}\cdots a_{s}) = \sigma(u_\ell)$ is a prefix of 
$\sigma^2 (b_\ell)$. Relation (\ref{w'}) implies that
\begin{center}
    $\sigma(a_{k_2}) \sigma(a_{k_2+1})\cdots \sigma (a_{s-1})a_{(s,1)}\cdots a_{(s,r)}a_{(s,r+1)}\cdots a_{(s,n)}$ is a prefix of $\sigma^2 (b_\ell)$.
\end{center} 
Hence,
\begin{center}
    $p_\ell : = \sigma(a_{k_2}) \sigma(a_{k_2+1})\cdots \sigma (a_{s-1})a_{(s,1)}...a_{(s,r)}$ is a prefix of $\sigma^2 (b_\ell)$.
\end{center} 
Thus, we get 
\begin{equation*}
    w = p_1 \sigma^2(b_2)\cdots \sigma^2(b_{\ell-1}) p_\ell
\end{equation*} as needed. We proved that every word of length at least $N_{\sigma}C$ has a unique $\sigma$-decomposition.  
The proof for $n > 2$ is similar.
\end{proof}

\section{Kakutani-Rokhlin towers for infinite substitutions.}\label{sect Rokhlin} In this section, we construct Kakutani-Rokhlin (K-R) towers for subshifts associated with left determined substitutions on countably infinite alphabets. The bases of K-R towers in our construction are 
represented by cylinder sets. We give two versions of K-R towers based on 
the structure of cylinder sets that form the bases of the towers. These constructions are central to our work as they form building blocks for Bratteli-Vershik models of the corresponding subshifts.

Recall that for a finite word $\ol x = (x_{0}, ... , x_n)$, $[\ol x]$ denotes the cylinder set of the form $[\ol x]_{(0,n)}$, $n\in \N_0$ (see (\ref{cy1})). We should mention that Theorem \ref{Rok 1} below
 is a generalization of  Lemma $2.8$ from \cite{Ferenczi_2006}, where the author worked with the squared drunken man substitution which has 
 constant length.

\begin{theorem}\label{Rok 1} Let $\sigma $ be a bounded length left determined substitution on a countably infinite alphabet $\A$, and let 
$(X_{\sigma},T)$ be the corresponding subshift. Then, for any $n \in \N$, there exists a partition of $X_\sigma$ into Kakutani-Rokhlin towers given by
\begin{equation} \label{eq Rokh1}
    X_{\sigma} = \underset{a \in \A}{\bigsqcup} \,\,\, \underset{k = 0}{\overset{h^{(n)}_a  - 1}{\bigsqcup}} \, T^k [\sigma^n (a)]
\end{equation} 
where $ h^{(n)}_a = |\sigma^n (a)|$, $a \in \A$.
\end{theorem}

\begin{proof} First, we prove this theorem for $n = 1$. 
Since $\sigma$ is left determined, there exists $N_{\sigma} \in \N$ such that every word of length greater than or equal to $N_{\sigma}$ has a unique decomposition. If $N_{\sigma}$ is odd, put $\ell  = \dfrac{N_{\sigma}-1}{2}$, otherwise put $\ell  = \dfrac{N_{\sigma}}{2}$. We take arbitrary $x$ in $X_\sigma$ and show that there exists $a \in \A$ such
that 
$$
x \in \underset{k = 0}{\overset{h_a  - 1}{\bigsqcup}} \, T^k [\sigma (a)].
$$
For $x = \{x_i\}_{i \in \Z} \in X_\sigma$, consider the word $w =  x_{-\ell} \cdots
x_0\cdots x_\ell$ of length 
$2\ell + 1 = N_{\sigma}$.  Since $\sigma$ is left determined,
 $w$ has a unique representation of the form
\begin{equation}\label{w_dec}
   w = w_{-p}w_{-p+1}\cdots w_0\cdots w_{s-1}w_s = w_{-p}\sigma (a_{-p+1})\cdots \sigma (a_0)\cdots  \sigma (a_{s-1}) w_s
\end{equation} 
for $a_j \in \A\,,\, j \in \{-p,...0...,s\}$, where $p,s$ are positive integers. Here we 
choose the labeling $\{-p,...0...,s\}$ such that $x_0 \in \sigma (a_0)$. Also, by 
definition of  a left determined substitution, $w_{-p}$ and $w_s$ are either 
empty words or suffix of $\sigma (a_{-p})$ and prefix of $\sigma (a_s)$,  
respectively. 

We denote $h_0 : = h_{a_0} = |\sigma (a_0)|$ and set $\sigma (a_0) = b_{1}\cdots b_{h_0}$, where $b_{i} \in \A$ for $i \in \{1, ...,h_0\}$. By uniqueness of the representation in (\ref{w_dec}), $a_0$ is uniquely 
determined. Also $x_0 = b_{j}$ for some $j \in \{1, ...,h_0\}$. We define 
\begin{equation*}
    y = (y_i),\,  y_i = x_{i-(j-1)},\, \mathrm{for}\,\mathrm{all}\,\, i \in \Z.
\end{equation*} Set $j-1 = k$, then $T^{-k} x = y$. Since $y \in [\sigma (a_0)]$, we obtain that $x \in T^{k} [\sigma (a_0)] $ for some $k \in \{0,...,h_0 -1\}$. Hence, we have
\begin{equation}\label{tow1}
    X_{\sigma} = \underset{a \in \A}{\bigsqcup} \,\,\, \underset{k = 0}{\overset{h_a  - 1}{\bigsqcup}} \, T^k [\sigma (a)]
\end{equation} 
where $h_a = |\sigma(a)|$, for $a \in \A$. We claim that the two unions in (\ref{tow1}) are disjoint. To see this assume that the first union is not disjoint. Hence there exists $x \in X_{\sigma}$ such that $x \in T^{k_i} [\sigma(a_i)] \cap T^{k_j} [\sigma(a_j)]$ for $a_i \neq a_j \in \A$, $k_i \in \{0,...,h_{a_i} -1\}$ and $k_j \in \{0,...,h_{a_j} -1\}$. This implies that the word $w =  x_{-\ell} \cdots
x_0\cdots x_\ell$ has two different decompositions (of the form (\ref{w_dec})) one with $a_0 = a_i$ and another with $a_0 = a_j$. This contradicts the fact that $\sigma$ is left determined. Similarly, assume that the second union in  (\ref{tow1}) is not disjoint. Hence there exists $x \in X_{\sigma}$ such that $x \in T^{k_i} [\sigma(a)] \cap T^{k_j} [\sigma(a)]$, for $k_i\neq k_j \in \{0,...,h_{a} -1\}$. In other words $ [\sigma(a)] \cap T^{k_j - k_i} [\sigma(a)] \neq \emptyset$. This again implies that word $w =  x_{-\ell} \cdots
x_0\cdots x_\ell$ has two different decompositions (of the form (\ref{w_dec})), which is a contradiction. Hence for every $x \in X$ there is a unique $a\in \A$ and $k \in \{0,...,h_{a}-1\}$ such that $x \in T^{k} [\sigma (a)]$. This proves the theorem for $n=1$.

 The statement for $n>1$ is proved similarly. It follows from the 
 fact that  if $\sigma$ is a bounded length left determined substitution, then, 
 for every $n>1$, $\sigma^n$ is also a left determined substitution 
 (Theorem \ref{LD1}), and we can repeat the above proof for $\sigma^n$.
  Therefore,  the decomposition of $X_\sigma$ analogous to (\ref{tow1}) holds. 
  \end{proof}

For completion, we state an analogue of the existence of K-R partition for constant length substitutions (in other words, the substitution matrix has the
equal row sum property) which follows immediately from 
Theorem \ref{Rok 1}.

\begin{corollary}\label{Rok 2}
Let $\sigma $ be a left determined substitution of 
constant length $L$  on a countably infinite alphabet $\A$, and let 
$(X_{\sigma},T)$ be the corresponding subshift. Then, for every $n \in \N$, 
\begin{equation} \label{eq Rokh1.2}
    X_{\sigma} = \underset{a \in \A}{\bigsqcup} \,\,\,\, \underset{k = 0}{\overset{L^n-1}{\bigsqcup}} \, T^k [\sigma^n (a)]. 
\end{equation} 
\end{corollary} 
Note that in Theorem \ref{Rok 1} (respectively Corollary \ref{Rok 2}), we assumed the substitution $\sigma$ was of bounded length (constant length, respectively). In Appendix \ref{App_A}, we provide a version of K-R partitions for subshifts associated with left-determined substitutions on countably infinite alphabets (see Theorem \ref{RK_n}), which does not need the assumption of bounded or constant length.

\section{Substitutions on infinite alphabet as Borel dynamical systems}\label{Sec sub-BD} Let $(X_\sigma,T)$ be the subshift associated with a substitution $\sigma$ on a countably infinite alphabet $\A$. We recall that the space 
$X_\sigma \subset \A^{\Z}$ is equipped with the subspace topology $\omega$ inherited from the metric topology on $\A^{\Z}$ (see (\ref{metric})). Then 
$(X_{\sigma},\omega)$ is a $0$-dimensional Polish space. By $\B$, we denote the sigma-algebra generated by open sets in $X_{\sigma}$ so that 
$(X_{\sigma},\B)$ is a standard Borel space.  The left shift $T$ is a Borel automorphisms, i.e., $T \in Aut(X_{\sigma},\B)$. In the rest of this paper, we will consider the subshift $(X_\sigma, T)$ as a Borel dynamical system. This allows us to interpret subshifts from a different perspective and use some methods of 
Borel dynamics. It is not hard to see that $T$ is, in fact, a homeomorphism of $X_\sigma$. 

\subsection{Nested sequence of complete sections} Our goal is to construct a Bratteli-Vershik model of the subshift $(X_{\sigma}, T)$ associated with a left determined substitution $\sigma$ on a countable alphabet (see Section \ref{Sec BBD}). An important ingredient of this construction is a nested sequence of complete sections (see Definition \ref{section}). In this section, we construct nested sequences of complete sections for subshifts (see Corollary \ref{CS 2}). Since there are many interesting examples of bounded size substitutions on countably infinite alphabets (see Definition \ref{Bdd size}), we will focus on this type 
of substitution in the main body of the paper. This assumption has some direct implications: for example, it allows us to control the cardinality of the intersection of the sequence of complete sections (Proposition \ref{base 2}), which, in turn, helps us to control the cardinality of the sets of minimal and maximal paths in the path space of the corresponding Bratteli-Vershik model (see Section \ref{Sec BBD}). In Appendix \ref{App_A} we relax the requirement of bounded size and provide a construction of complete sections associated with such subshifts (see Corollary \ref{CS}).

We recall that the notation $[\cdot]$ is used for a cylinder set of $X_\sigma$.

\begin{proposition}\label{base 2} Let $\sigma$ be a bounded size left determined substitution on a countable alphabet $\A$. We define the sequence of Borel sets $\{A_n\}_{\N_0}$ as follows:
\begin{equation} \label{com sec 2}
    A_n = \underset{a_i \in \A}{\bigsqcup}  [\sigma^n(a_i)], \,\,\, \textrm{for} 
    \,\,n \in \N_0.
\end{equation} 
Then the set $\underset{n \in \N_0}{\bigcap} A_n $ is at most countably infinite. 
\end{proposition}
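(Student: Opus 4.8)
The plan is to show that $\bigcap_{n} A_n$ consists of at most countably many points by giving an injection from this set into a countable set, exploiting the bounded size hypothesis. First I would observe the geometric meaning of the sets $A_n$: since $\sigma$ is left determined, Theorem \ref{Rok 2} (or \ref{RK_n}) gives a Kakutani-Rokhlin partition with bases $[\sigma^n(a)]$, $a \in \A$, and $A_n$ is precisely the union of these bases, i.e.\ the base of the $n$-th K-R tower system. A point $x$ lies in $A_n$ exactly when the origin $x_0$ sits at the left edge of a $\sigma^n$-block in the unique $\sigma^n$-decomposition of $x$; equivalently $x \in \sigma^n(X_\sigma)$ read off with the block boundary at position $0$. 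Thus $x \in \bigcap_n A_n$ means that for every $n$ there is $y^{(n)} \in X_\sigma$ with $x \in [\sigma^n(y^{(n)}_0)]$ and more precisely $x = \sigma^n(z^{(n)})$ for some $z^{(n)} \in X_\sigma$ (using recognizability, Corollary \ref{rec}, and injectivity of $\sigma^n$ for large $n$, Theorem \ref{n-inj}), with the block decomposition aligned so that a block starts at coordinate $0$.

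Next I would extract the key combinatorial constraint from the bounded size assumption. If $\sigma$ has size $t$, then every letter of $\sigma(n)$ lies in $\{n-t, \dots, n+t\}$, so every letter of $\sigma^n(a)$ lies in $\{a - nt, \dots, a + nt\}$; in particular the first letter of $\sigma^n(a)$ lies within $nt$ of $a$. Hence if $x \in [\sigma^n(a_n)]$ for each $n$, the value $x_0$ (the first letter of $\sigma^n(a_n)$) satisfies $|x_0 - a_n| \le nt$. This does not by itself bound things, so instead I would iterate: writing $x = \sigma^n(z^{(n)})$, the letter $x_0$ is the first letter of $\sigma^n(z^{(n)}_0)$, hence $|x_0 - z^{(n)}_0| \le nt$, but more usefully, applying $\sigma$ step by step, the coordinate of the block containing $x_0$ at each intermediate level is pinned down. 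The cleaner route: consider the \emph{left-infinite} part. A point $x \in \bigcap_n A_n$ determines, for each $n$, the sequence $z^{(n)}$ with $\sigma^n(z^{(n)}) = x$ (block-aligned at $0$), and $z^{(n)} = \sigma(z^{(n+1)})$ read with the correct alignment; this gives a coherent backward orbit $(x, z^{(1)}, z^{(2)}, \dots)$ under the map $\sigma$ on $X_\sigma$. Since $\sigma$ is injective on $X_\sigma$ (Theorem \ref{homeo}), such a backward orbit, if it exists, is unique once we know it exists; so the content is to bound how many $x$ admit an infinite backward $\sigma$-orbit that stays block-aligned at $0$.

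Then I would exploit bounded length together with bounded size to finish. Because $\sigma$ has bounded length $L$, the word $\sigma^n(a)$ has length between $2^n$ and $L^n$; the prefix $\sigma^n(a)_{[0,\,2^n-1]}$ is determined by $a$ and, crucially, the first letter of $\sigma^n(a)$ as a function of $a$ takes each value only finitely often — in fact $a \mapsto (\text{first letter of } \sigma(a))$ is finite-to-one with fibers of size $\le 2t+1$, since that first letter lies in $\{a-t,\dots,a+t\}$, so as $a$ ranges over $\A \cong \Z$ the first letter is a "bounded perturbation of the identity". Consequently $x_0$ determines $z^{(1)}_0$ up to finitely many choices, then $z^{(1)}_1, z^{(1)}_2,\dots$ are constrained (the whole word $\sigma(z^{(1)}_0)$ must be a prefix of $x$), and recursively $z^{(n)}_0$ is determined by $x_0$ up to finitely many choices at each of $n$ steps. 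This still allows exponentially-in-$n$ many candidates, so the honest finish is: the set $\bigcap_n A_n$ is a \emph{closed} subset of $X_\sigma$ (each $A_n$ is clopen, being a countable disjoint union of cylinders, and the intersection is closed), and I claim it has no accumulation points that force uncountability — more precisely, I would show $\bigcap_n A_n \subseteq \{x : x_{(-\infty,0)} \text{ is periodic-ish}\}$...

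Actually, the robust argument I would commit to: for $x \in \bigcap_n A_n$ and each $n$, $x \in [\sigma^n(a)]$ for a unique $a = a_n(x) \in \A$, and uniqueness of the $\sigma^n$-decomposition forces $a_{n+1}(x)$ to be the unique letter $b$ with $a_n(x)$ the first letter of $\sigma(b)$ (when such $b$ exists) — so $x \mapsto (a_0(x), a_1(x), a_2(x), \dots)$ is injective into the set of sequences $(a_n)$ with $a_n = $ first-letter$(\sigma(a_{n+1}))$. By bounded size each $a_n$ determines $a_{n+1}$ up to $\le 2t+1$ choices, but also determines $a_0$ (hence all of $a_0,\dots,a_{n-1}$), so the map $x \mapsto a_0(x)$ is countable-to-one will not suffice either; instead I finish by noting the injectivity of $\sigma$ on $X_\sigma$ makes $x \mapsto (a_n(x))_n$ injective, and the set of valid $(a_n)$-sequences is contained in $\A^{\N}$ but constrained so that $a_n$ is within $nt$ of... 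The real point is that the intersection $\bigcap_n \sigma^n(X_\sigma)$ is the set of points with a full backward $\sigma$-orbit, and by injectivity of $\sigma$ and the bounded-size banded structure this is a closed set that injects into a single orbit-counting argument giving countability. The main obstacle, which I would address head-on, is ruling out that $\bigcap_n A_n$ could be a Cantor set: this is exactly where the bounded \emph{size} (not merely bounded length) is essential, as it forces the block containing coordinate $0$ at every level to have its label pinned near a fixed integer, so that only countably many coherent backward orbits survive; a substitution of bounded length but unbounded size could violate the conclusion, and the proof must and will use $t < \infty$ decisively.
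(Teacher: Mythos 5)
Your proposal assembles the right ingredients --- the Kakutani-Rokhlin interpretation of $A_n$, injectivity of $\sigma^n$ (Theorem \ref{n-inj}), and the observation that bounded size makes $a \mapsto \mathrm{first}(\sigma(a))$ finite-to-one with fibres of size at most $2t+1$ --- but it does not close either of the two steps that the paper's proof actually carries out, and you concede as much at several points (``this still allows exponentially-in-$n$ many candidates'', ``will not suffice either'', ``the proof must and will use $t<\infty$ decisively''). The first gap is the countability of the set of coherent sequences $(a_n)$ with $[\sigma^{n+1}(a_{n+1})]\subset[\sigma^n(a_n)]$ for all $n$. Finite-to-one upward branching is exactly the situation of an infinite, finitely branching tree, which can have continuum many infinite branches, so the bound $\le 2t+1$ per step cannot by itself rule out a Cantor set; you correctly identify this as ``the main obstacle'' but never resolve it. The paper resolves it by running the unique $\sigma$-decomposition argument at a level $k$ with $2^{k-2}\ge N_{\sigma}$: writing $\sigma^k(a_k)=\sigma(\sigma^{k-1}(a_k))$ and invoking injectivity of $\sigma$ on words of length $\ge N_\sigma$ and of $\sigma^{k-1}$ on letters, it pins down $a_n$ uniquely for every $n\ge k$, so that the tail of the sequence carries no freedom at all; bounded size is then used only to bound the finitely many remaining choices of $a_0,\dots,a_{k-1}$. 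That is the step your write-up is missing.

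The second gap is your claim that injectivity of $\sigma$ on $X_\sigma$ makes $x\mapsto(a_n(x))_n$ injective, i.e.\ that a non-empty $\bigcap_n[\sigma^n(a_n)]$ is a singleton. The cylinders $[\sigma^n(a_n)]$ constrain only the coordinates $x_i$ with $i\ge 0$, so injectivity of $\sigma^n$ pins down at most the right half $\{x_i\}_{i\ge 0}$; two points agreeing on all non-negative coordinates but differing at negative ones would a priori lie in the same intersection. The paper handles the left half by a separate argument: it passes to $y=T^{-1}x$, which sits at the top of every Kakutani-Rokhlin tower of Theorem \ref{Rok 2}, descends to the base point $z^{(p)}=T^{-(|\sigma^p(a'_p)|-1)}y$, and uses uniqueness of the decomposition again to recover $x_{[-|\sigma^p(a'_p)|,\dots,-1]}$ for every large $p$. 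Your proposal never addresses the negative coordinates, so even granting countably many valid sequences, countability of $\bigcap_n A_n$ would not follow from what you wrote.
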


\begin{proof} For $x \in \underset{n\in \N_0}{\bigcap} A_n$, there exists a 
sequence of letters $(a_n)_{n \in \N}$ such that $x \in [\sigma(a_1)]$, $x \in 
[\sigma^2(a_2)]$,..., $x \in [\sigma^n(a_n)]$,... and so forth. Thus, to find the 
cardinality of $\underset{n \in \N_0}{\bigcap} A_n $, we need to find out how
many sets of the form $\underset{n \in \N_0}{\bigcap} [\sigma^n(a_n)]$ are non-empty, and then determine the cardinality of each such set. 

To determine how many sets can have the above form, pick the
 smallest $k \in \N$ such that $2^{k-2} \geq N_{\sigma}$. Then $|\sigma^k (a_k)| \geq 2^k \geq N_{\sigma}$ and it has a unique $\sigma$-decomposition of the form
\begin{equation}\label{k-rep}
  \sigma^k (a_k) = w_1\sigma (b_2)\cdots \sigma (b_{s-1}) w_s\,;\, b_i \in \A\,,\, i \in \{1,2,...,s\},
\end{equation} 
where $w_1$ and $w_s$ are either empty words or a suffix of $\sigma (b_1)$ and a prefix of $\sigma (b_s)$, respectively. Since 
\begin{equation}\label{k1-rep}
 \sigma(\sigma^{k-1} (a_k)) = w_1\sigma (b_2)\cdots \sigma (b_{s-1}) w_s, 
\end{equation}
 it implies $w_1 = \sigma (b_1)$ and $w_s = \sigma (b_s)$, i.e. 
 \begin{equation}\label{k2-rep}
 \sigma(\sigma^{k-1} (a_k)) = \sigma(b_1b_2\cdots b_s)
\end{equation} 
By Theorem \ref{n-inj}, the map 
$$
\sigma: \underset{n\geq N_{\sigma}}{\bigcup} \La_{\sigma} (n) \rightarrow \A^{+}
$$ 
is injective. Note that $|\sigma^{k-1} (a_k)| \geq 2^{k-1} \geq N_{\sigma}$, hence relation 
(\ref{k2-rep}) implies 
\begin{equation}\label{k3-rep}
 \sigma^{k-1} (a_k) = b_1b_2\cdots b_s.
\end{equation} 
Since we assumed that $2^{k-2} \geq N_{\sigma}$, Theorem \ref{n-inj} implies 
that the map  $\sigma^{k-1} : \A \rightarrow \A^{+}$ is injective. Hence, by 
(\ref{k3-rep}) and the fact that the word $b_1\cdots b_s \in \A^+$ is unique, we 
get that $a_k$ is unique. A similar argument shows the uniqueness of $a_n$ for 
all $n > k$. Since $\sigma$ is of bounded size, once we fix $a_k$, there are $(2t + 1)$ choices for $a_{k-1}$, $(2t + 1)^2$ choices for $a_{k-2}$,...., and $(2t + 
1)^{k-1}$ choices for $a_0$. Here $t \in \N$ is the size of $\sigma$ (see Definition \ref{Bdd size}). In other words, for a fixed $a_k$, there are finitely 
many choices left for terms $a_0$,...,$a_{k-1}$. We have proved the following 
result: If $\underset{n \in \N_0}{\bigcap} [\sigma^n(a_n)] \neq \emptyset$, then 
there exist a $k \in \N$ such that for all $n\geq k$, $a_n$ is uniquely 
determined and there are finitely many choices left for terms $a_0,..., a_{k-1}$. 
It follows from the proved result that there exist countably many 
non-empty sets of the form $\underset{n \in \N_0}{\bigcap} [\sigma^n(a_n)]$. 

To complete the proof of the proposition, we will show the following: If for a 
sequence of letters $(a_n)_{n \in \N_0}$ the set $\underset{n \in \N_0}{\bigcap} 
[\sigma^n(a_n)] $ is non-empty, then it is a singleton set. For this, assume that $x 
\in \underset{n \in \N_0}{\bigcap} [\sigma^n(a_n)]$, and choose $k \in \N$ such 
that $2^{k-1} \geq N_{\sigma}$. We consider the cylinder set $[\sigma^k (a_k)]$ 
containing $x$. By Theorem \ref{n-inj}, the map  $\sigma^{k} : \A \rightarrow \A^{+}$ 
is injective, and this means that the set  $x_{[0,..., |\sigma^k (a_k)| -1]}$ is
uniquely defined. As $k \rightarrow 
\infty$, we see that the string  $\{x_i\}_{i\geq 0}$ is uniquely defined. 
It remains to show  that the same is true for $\{x_i\}_{i < 0}$. Consider
 the point $y = T^{-1}x$. Note that for 
every $n \in \N$, there exists $a'_{n} \in \A$ such that $y \in T^{(|\sigma^n 
(a'_n)| - 1)} [\sigma^n (a'_n)]$. In other words, for every $n \in \N$, $y$ lies on 
the top of a Kakutani-Rokhlin tower (see (\ref{eq Rokh1})), with base $
[\sigma^n (a'_n)]$ for some $a'_{n} \in \A$. Put $z^{(n)} : = T^{-(|\sigma^n 
(a'_n)| - 1)} y$ for each $n \in \N$. Pick $p \in \N$ such that $2^{p-1} \geq 
N_{\sigma}$, then applying the same proof as in (\ref{k-rep}) through 
(\ref{k3-rep}), we get that the word $\sigma^p (a'_{p})$ is uniquely 
determined. Hence the cylinder set $[\sigma^p (a'_{p})]$ is unique. Since 
$z^{(p)} \in [\sigma^p (a'_{p})]$, this fixes the entries $z^{(p)}_{[0,..., |\sigma^p (a'_p)| -1]}$. 
Note that
$$
z^{(p)}_{[0,..., |\sigma^p (a'_p)| -1]} = y_{[-(|\sigma^p (a'_p)| -1),...,0]} = x_{[-(|\sigma^p (a'_p)|),...,-1]}.
$$

Thus, it proves that $x_{[-(|\sigma^p (a'_p)|),...,-1]}$ is uniquely determined. As $p \rightarrow \infty$, the set $\{x_i\}_{i<0}$ is uniquely determined, too.
 Hence, we have shown that if the set $\underset{n \in \N_0}{\bigcap} [\sigma^n(a_n)]$ is non-empty, then it is a singleton set. This completes the proof of the proposition. 
 \end{proof} 

The following corollary is an important tool in the construction of 
Bratteli-Vershik models for substitutions on infinite alphabets (see Section 
\ref{Sec BBD}).  

\begin{corollary}\label{CS 2} 
Let $\sigma $ be a bounded size left determined substitution on countably infinite alphabet $\A$, and let $(X_{\sigma},T)$ be the corresponding subshift. Then the sequence of Borel sets $(A_n)_{n\in \N_0} \subset X_{\sigma}$, defined in (\ref{com sec 2}), has following properties:

\begin{enumerate}[(a)]
\item $X_{\sigma} = A_0 \supset A_1 \supset A_2 \supset A_3\cdots\, $;
\item The countable set $A_{\infty} : = \underset{n\in \N_0}{\bigcap} A_n$ is  a wandering set with respect to $T$;
\item $A_n$ is a complete $T$-section for each $n \in \N_0$;
\item For each $n \in \N_0$, every point in $A_n$ is recurrent, i.e., the 
$T$-orbit of every point returns to $A_n$.
\end{enumerate}

\end{corollary}
\begin{proof} Note that for all $n \in \N$ and $a \in \A$, there exists $b \in \A$ such that, $[\sigma^n (a)] \subset [\sigma^{n-1} (b)]$. To see this, assume that for some $a \in \A$, there exists $t \in \N$ such that $[\sigma^t (a)] \not\subset [\sigma^{t-1} (b)]$ for every $b \in \A$. This implies that there is no letter $b \in \A$, such that $\sigma^{t-1} (b)$ forms the first $|\sigma^{t-1} (b)|$ entries of the word $\sigma^t (a)$. This, in turn, implies that there is no letter $b \in \A$, such that $b$ is the first letter in the word $\sigma(a)$, which is a contradiction. This observation together with the definition of $A_n$ proves
 $(a)$. 
 
 The set $A_{\infty}$ is a countable set, see Proposition \ref{base 2}. Note that $A_{\infty}$, $T(A_{\infty})$, $T^2(A_{\infty})$, ...,$T^{n-1}(A_{\infty})$ are pairwise disjoint for every $n$ because of Proposition \ref{base 2}. Therefore, $A_{\infty}$ is a wandering set. This proves part $(b)$. 
 
 To see that $(c)$ and $(d)$ are true, we remark that, for every $n \in \N$, $A_n$ is the union of bases of Rokhlin towers given in (\ref{eq Rokh1}). Hence, for every $n \in \N$, $A_n$ is a complete section, and every point in $A_n$ is recurrent. \end{proof} 
 
 By Theorem \ref{Rok 1}, for $n \in \N$, $X_{\sigma}$ is decomposed into $T$-towers 
\begin{equation}\label{eq Tow 2}
   \xi_n(a) = \{T^i [\sigma^n (a)] : i = 0,..., (h^n_{a} -1)\},\,\,\,\, a \in \A
\end{equation}  where $h_a = |\sigma (a)|$. In other words, for $a \in \A$, $\xi_n(a)$ denotes a tower of height $h^n_a - 1$ with cylinder set $[\sigma^n (a)]$ as its base. For $n \in \N$, we will denote by
\begin{equation}\label{tower 2}
    \xi_n = \{\xi_n(a) : a \in \A\}
\end{equation} the countable partition of $X_\sigma$. For sake of completion, we denote by $\xi_0$ the following countable partition of $X_{\sigma}$:
$$
\xi_{0} = \underset{a_i \in \A}{\bigsqcup} [a_i].
$$
\begin{remark}\label{refine 2}
Let  $(\xi_n)_{n \in \N_0}$ be the sequence of 
countable partitions defined above. Observe that,

\noindent $(i)$ $\xi_{n+1}$ refines $\xi_n$ for every $n \in \N_0$. To see this, note that for every $n \in \N$, and $a \in \A$, there exist $b \in \A$ such that $[\sigma^n (a)] \subset [\sigma^{n-1} (b)]$;

\noindent $(ii)$  $\bigcup_n \xi_n$ generates the sigma-algebra of Borel sets $\B$;

\noindent $(iii)$  all elements of partitions $\xi_n$, $n\in \N_0$ are clopen in the topology $\omega$. 
\end{remark}


\section{Generalized Bratteli-Vershik models for infinite substitutions.} \label{Sec BBD} 
\subsection{Generalized Bratteli diagrams}\label{GBDs} The notion of a generalized Bratteli diagram is a natural extension of the notion 
of a Bratteli diagram, where each level in the diagram is allowed to be 
countable. It was proved in \cite{BezuglyiDooleyKwiatkowski_2006}
that any aperiodic Borel automorphism can be realized as a Vershik map on the 
path space of a generalized Bratteli diagram, see Theorem \ref{thm borel-bratteli} below. In this section, we will provide an algorithm to construct a  
generalized stationary Bratteli-Vershik model for a subshift associated with a 
bounded size left determined substitution on a countably infinite alphabet.

\begin{definition}\label{def GBD} A \textit{generalized Bratteli diagram} is a 
graded graph $B = (V, E)$ such that the vertex set $V$ and the edge set $E$ 
can be represented as partitions  $V = \bigsqcup_{i=0}^\infty  V_i$ and $E = 
\bigsqcup_{i=0}^\infty  E_i$ satisfying the following properties: 
\vspace{2mm}

\noindent 
$(i)$ The number of vertices at each level $V_i$, $i \in \N_0$, is countably 
infinite (in most cases, we will identify each $V_i$ with either $\Z$ or $\N$). 
For all $i \in 
\N_0$, $E_i$ represents the countable  set of edges between the levels $V_i$ and $V_{i+1}$. 

\vspace{2mm}

\noindent $(ii)$ The range and source maps $r,s : E \rightarrow V$ are defined on the diagram $B$ such that $r(E_i) \subset V_{i+1}$ and $s(E_i) \subset V_{i}$ 
for each $i \in \N_0$. Moreover, for all $v\in V$, $s^{-1}(v)\neq \emptyset $, 
and, for all $v \in V\setminus V_0$, $r^{-1}(v)\neq\emptyset$. 

\vspace{2mm}

\noindent $(iii)$ For every vertex $v \in V \setminus V_0$, $|r^{-1}(v)| < \infty$. Here $|\cdot|$ denotes the cardinality of the set.
\end{definition}

For a vertex $v \in V_i$ and a vertex $w \in V_{i+1}$, we will denote by $E(v, w)$ the set of edges between $v$ and $w$. It follows from Definition \ref{def GBD} that the set $E(v, w)$ is either finite or empty for every fixed pair of vertices 
$(v, w)$. Set $f^{(i)}_{w,v} = |E(v, w)|$ for every $v \in V_i$ and $w \in V_{i+1}$. In such a way,  we associate with a generalized Bratteli diagram $B =(V, E)$ 
a sequence of non-negative infinite matrices $F_i$  ($i \in \N_0$) that are
called the \textit{incidence  matrices} and given by 
$$
F_i = (f^{(i)}_{w,v} : w \in V_{i+1}, v\in V_i),\ \   f^{(i)}_{w,v} 
 \in \N_0.
$$
\begin{remark} The next observations follow directly from Definition \ref{def GBD}:
\vspace{2mm}

\noindent $(1)$ The structure of a generalized Bratteli diagram is completely 
determined by the sequence of incidence matrices $(F_n), \, n \in \N_0$. We 
will write $B = B(F_n)$ to emphasize that the generalized Bratteli diagram $B$ is determined by $(F_n)$. 

\vspace{2mm}

\noindent $(2)$ For each $n \in \N_0$, the matrix $F_n$ has at most finitely many non-zero entries in each row and none of its rows or columns are entirely zero. A column of $F_n$ can have a finite or infinite number of non-zero entries. 

\end{remark}

\begin{definition} Let $B(F_n)$ be a generalized Bratteli diagram such that for every $n \in \N_0$ the matrix $F_n$ is same, i.e. $F_n = F$, $n \in \N_0$, then we call the diagram a \textit{stationary generalized Bratteli diagram}. We will write $B = B(F)$ in this case. 
    
\end{definition}

Figure \ref{fig:kernels} is an example of a generalized Bratteli diagram. Clearly, we can see only a finite part of the diagram as there are countably many levels and each level has countably many vertices. 
\begin{figure}[!htb] 
\centering
  \includegraphics[width=0.75\textwidth, height=0.45\textheight]
  {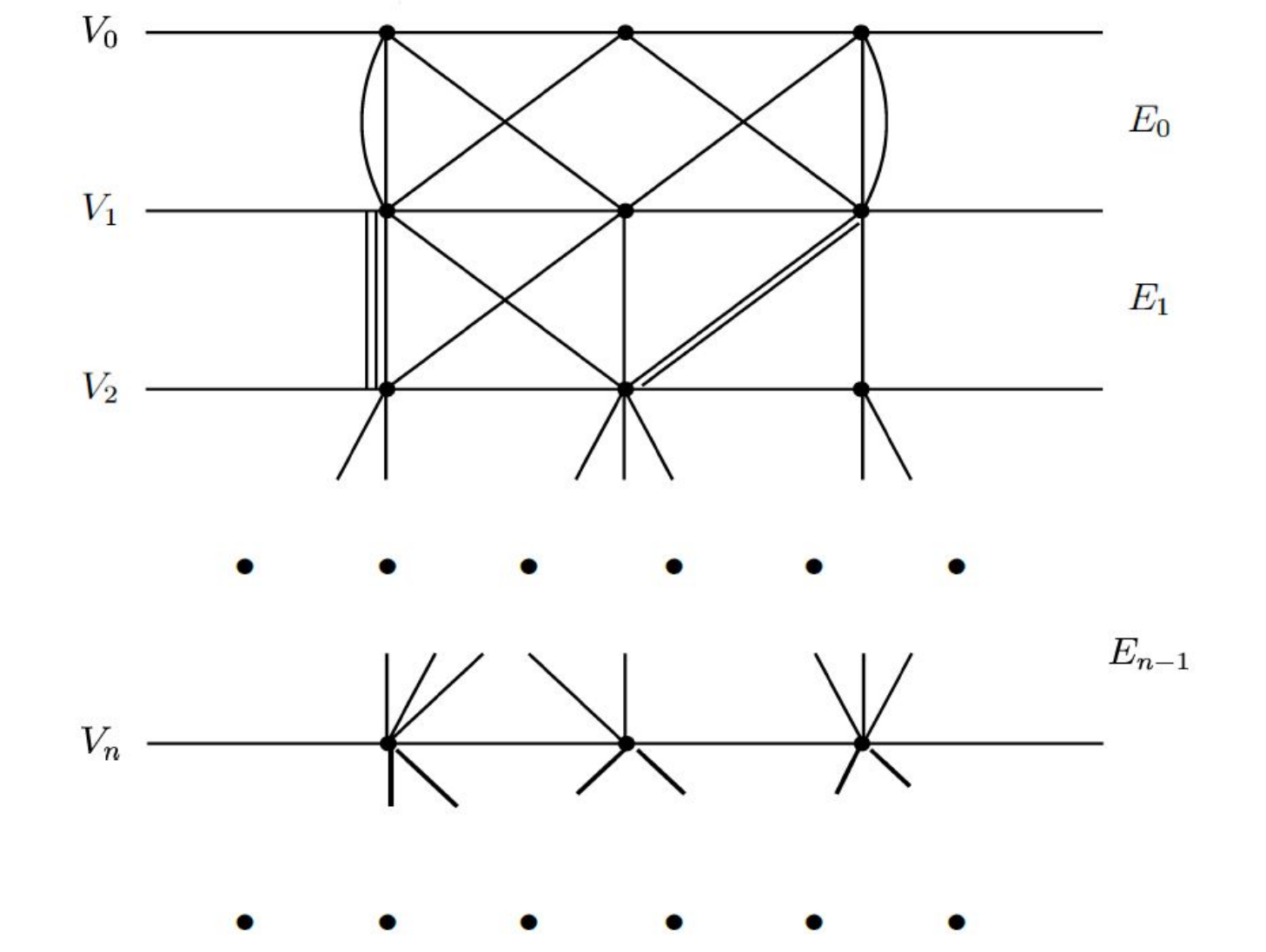}
  \caption{Example of a Bratteli diagram: levels, vertices, and edges
  (see Definition \ref{def GBD})}
  \label{fig:kernels}
  \end{figure}

If the level $V_0$ consists of a single vertex and each set $V_n$ is finite, then we get the usual definition of a Bratteli diagram which was first defined in \cite{Bratteli_1972} and used to model Cantor dynamical systems and classify them with respect to orbit equivalence (see \cite{HermanPutnamSkau_1992}, \cite{Giordano_Putnam_Skau_1995}). 

\begin{definition}\label{def path space}
A finite or infinite path in a generalized Bratteli diagram $B=(V,E)$ is a sequence of edges $(e_i, \, i \in \N_0)$ such that $r(e_i) = s(e_{i+1})$ for each $i$. For a diagram $B=(V, E)$, we will denote by $Y_B$ the set of all infinite paths in $B$. This set is called the \textit{path space} of the generalized Bratteli diagram $B=(V,E)$. If $\ol e = (e_0, ... , e_n)$ is a finite path in $B=(V,E)$, then the set 
\begin{equation}\label{cy_B}
    [\ol e] := \{x = (x_i) \in Y_B : x_0 = e_0, ..., x_n = e_n\}, 
\end{equation} 
is called the \textit{cylinder set} associated with $\ol e$. It is the set containing all infinite paths with the first $n+1$ entries same as in $\ol e$. 

Two paths $x= (x_i)$ and $y=(y_i)$ are called \textit{tail invariant} 
if there exists some $n$ such that $x_i= y_i$ for all $i> n$. This notion defines a 
countable Borel equivalence relation $\mathcal R$ on the path space $Y_B$
which is called the \textit{tail equivalence relation}. A Borel measure (finite or sigma finite) that is invariant under the tail equivalence relation $\mathcal{R}$ is called a \textit{tail invariant} measure.
\end{definition}

 For $v \in V_0$ and $w \in V_n$, $n \in \N$, we will denote the finite path starting at $v$ and ending at $w$ by $\ol e(v, w)$ and the corresponding cylinder set by $[\ol e(v, w)]$. 

\begin{remark} In this remark, we collect several obvious statements about the path space $Y_B$ of a generalized Bratteli diagram $B$.

\vspace{2mm}

\noindent $(1)$ The path space $Y_B$ is a zero-dimensional Polish space with the topology generated by cylinder sets. Moreover, every cylinder set is a clopen set in this topology. This topology coincides with the topology generated by the following metric on $Y_B$: for  $x = (x_i), \, y = (y_i)$ ($x \neq y$), define 

$$
\mathrm{dist}(x, y) = \frac{1}{2^N},\ \ \ N = \min\{i \in \N_0 : x_i 
\neq y_i\}.
$$

\vspace{2mm}

\noindent $(2)$ The path space $Y_B$ of a generalized Bratteli diagram is a standard Borel space, where the Borel structure is generated by clopen (cylinder) sets. In general, $Y_B$ is not locally compact. One can see that 
$Y_B$ is locally compact if every column of $F_n$ has finitely many non-zero
entries. 

\vspace{2mm} 

\noindent $(3)$ We will assume that the path space $Y_B$ of a generalized Bratteli diagram $B$  has no isolated points, i.e., for every infinite path $(x_0, x_1, x_2, ... ) \in Y_B$ and every $n \in \N_0$, there exists a level $m > n$ such that $|s^{-1}(r(x_m))| > 1$. 
\end{remark}

\begin{definition}\label{tele} Given a generalized Bratteli diagram $B = (V,E)$ and a  monotone increasing sequence $(n_k : k \in \N_0), n_0 = 0$, we define a new generalized Bratteli diagram $B' = (V', E')$ as follows: the vertex sets are 
determined by $V'_k = V_{n_k} $, and the edge sets  $E'_k = E_{n_{k+1}-1} \circ ...\circ E_{n_k}$ are formed by finite paths between the levels $V'_k$ and 
$V'_{k+1}$. The diagram $B' = (V', E')$  is called a \textit{telescoping} of the original diagram $B = (V,E)$. The incidence matrices  of $B'$ are the 
products of incidence matrices of $B$: 
$F'_k = F_{n_{k+1}-1} \circ ...\circ F_{n_k}$.
\end{definition}

For a generalized Bratteli diagram $B = (V, E)$, we define a \textit{partial order} $\geq$ on each edge set $E_i$  as follows:  for every $v \in V_{i+1}$, equip the finite set $r^{-1}(v)$ with a \textit{linear order} $\geq$ so that any two edges $e,e' \in E_i$ are comparable if and only if they have the same range, 
$r(e) = r(e')$. Since we do this for each $E_i \, (i \in \N_0)$, we obtain a partial order $\geq$ on the set of finite paths. Using the partial order $\geq$, 
we can define a \textit{partial lexicographic order} on the set of all paths between any two  levels (say  $V_t$ and $V_s$ for $s>t$):
$(e_{t},...,e_{s-1}) > (f_{t},...,f_{s-1})$ if and only if for some $i$ 
with $t \le i <s$, $e_j=f_j$ for $i<j < s$ and $e_i> f_i$. 

Applying this lexicographic order, we can compare any two finite paths in 
$E(V_0, w)$, the set of all finite paths from level $V_0$ to a vertex $w 
\in V_n$ for some $n>0$. This means that every set $E(V_0, w)$ contains one maximal path and one minimal path. For an infinite path  $e= (e_0, e_1,..., e_i,...)$, we say that it  is 
\textit{maximal} if $e_i$ is maximal in the set $r^{-1}(r(e_i))$ for each $i\in 
\N_0$. Similarly, we can define a \textit{minimal path}. We will denote by 
$Y_{max}$ (respectively $Y_{min}$) the set of maximal (respectively minimal) 
infinite paths in $Y_B$. Clearly, $Y_{max}$ and  $Y_{min}$ are Borel sets.

\begin{definition}\label{order} A generalized Bratteli diagram $B=(V,E)$ together with a partial order $\geq$ on $E$ is called a \textit{generalized  
ordered Bratteli diagram} and denoted by $B=(V,E,\geq)$. A generalized stationary Bratteli diagram is called stationary ordered if the linear order
on $r^{-1}(v)$ is the same for all vertices $v$ independent of the level.
\end{definition}

\begin{definition}\label{VM} For a generalized ordered  Bratteli diagram, we define a Borel transformation 
$$
\varphi : Y_B \setminus Y_{max} \rightarrow Y_B \setminus Y_{min}
$$ 
as follows:  given $x = (x_0, x_1,...)\in Y_B\setminus Y_{max}$, let $m$ be the smallest number such that $x_m$ is not maximal. Let $g_m$ be the successor of $x_m$ in the set $r^{-1}(r(x_m))$. Then we define $\varphi(x)= (g_0, g_1,...,g_{m-1},g_m,x_{m+1},...)$ where $(g_0, g_1,..., g_{m-1})$ is the minimal path in $E(V_0, r(g_{m-1}))$. It is not difficult to check that the map $\varphi$  is a Borel bijection. If we can extend $\varphi$ bijectively to the entire path space $Y_B$, then we call the Borel transformation $\varphi: Y_B  \rightarrow Y_B$ a \textit{Vershik map}, and the Borel dynamical system $(Y_B,\varphi)$ is called a generalized \textit{Bratteli-Vershik} system.
\end{definition}

\begin{remark} The Vershik map $\varphi$ admits a Borel extension to $Y_B$ 
if and only if $|Y_{max}|  = |Y_{min}|$. This relation includes the cases when 
the sets $Y_{max}$ and  $Y_{min}$ are finite, countable, and uncountable.
\end{remark}

\begin{lemma} Let $\mu$ be a finite or sigma-finite measure on the path space 
$Y_B$ of a generalized ordered Bratteli diagram, and $\varphi$ the Vershik map. If $\mu(Y_{max}) =
\mu(Y_{min}) =0$, then $\mu$ is tail invariant if and only if $\mu$ is 
$\varphi$-invariant.

\end{lemma}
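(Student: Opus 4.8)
The plan is to prove the equivalence by identifying the tail equivalence relation $\mathcal{R}$ on $Y_B$ with the orbit equivalence relation of the (partially defined) Vershik map $\varphi$, away from the $\varphi$-orbits of $Y_{max}\cup Y_{min}$, and then to transfer invariance across that null set. The first, easy, half of the identification: for $x\in Y_B\setminus Y_{max}$, by construction $\varphi(x)$ differs from $x$ only in the coordinates up to the smallest non-maximal level of $x$, so $\varphi(x)$ and $x$ share a common tail, i.e. $(x,\varphi(x))\in\mathcal{R}$. Hence the $\varphi$-orbit relation is contained in $\mathcal{R}$. Since a tail-invariant measure is, by definition, invariant under every Borel partial bijection whose graph lies in $\mathcal{R}$ — and $\varphi$ is such a bijection — a tail-invariant $\mu$ with $\mu(Y_{max})=\mu(Y_{min})=0$ is $\varphi$-invariant. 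This takes care of one direction.

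For the reverse inclusion I would prove the structural statement: if $x\,\mathcal{R}\,y$, then $y=\varphi^{k}(x)$ for some $k\in\Z$, with all intermediate iterates defined. Fix a level $n$ with $x_i=y_i$ for $i\ge n$, and let $C\subset Y_B$ be the cylinder set of all paths with the common tail $(x_n,x_{n+1},\dots)$. By Definition \ref{def GBD}$(iii)$ the set $E(V_0,s(x_n))$ of finite paths into $s(x_n)$ is finite, so $C$ is a finite set, linearly ordered by the lexicographic order on its initial segments. Applying $\varphi$ to a path in $C$ that is not $C$-maximal changes only coordinates below level $n$ (the smallest non-maximal coordinate lies below $n$) and yields the immediate successor inside $C$; applying $\varphi$ to the $C$-maximal path exits $C$. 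Therefore $y=\varphi^{k}(x)$ where $k$ is the difference of the $C$-ranks of $y$ and $x$, and every intermediate iterate stays in $C$ and is neither globally maximal nor globally minimal (its initial segment is not extremal in $E(V_0,s(x_n))$), so each of these iterates of $\varphi$ is genuinely defined. This gives $\mathcal{R}=\mathcal{R}_\varphi$ on all of $Y_B$, and the exponent $k=k(x,y)$ is read off from finitely many coordinates once a common tail-level is chosen, hence is Borel.

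To conclude "$\varphi$-invariant $\Rightarrow$ tail-invariant", I would invoke Feldman–Moore (Theorem \ref{FM}): $\mathcal{R}$ is generated by countably many Borel automorphisms of $Y_B$; fix one, $\gamma$. By the structural step, $Y_B=\bigsqcup_{k\in\Z}D_k$ with $D_k:=\{x:\gamma(x)=\varphi^{k}(x)\}$ Borel and $\varphi^{k}|_{D_k}$ an honest Borel bijection onto $\gamma(D_k)=\varphi^{k}(D_k)$, and also $Y_B=\bigsqcup_{k}\varphi^{k}(D_k)$. Then for Borel $A\subseteq Y_B$,
\[
\mu(\gamma^{-1}A)=\sum_{k\in\Z}\mu\!\bigl(D_k\cap\varphi^{-k}(A)\bigr)=\sum_{k\in\Z}\mu\!\bigl(\varphi^{k}(D_k)\cap A\bigr)=\mu(A),
\]
where the middle equality is $\varphi$-invariance applied on each piece (legitimate up to the $\mu$-null sets $Y_{max},Y_{min}$, which is harmless because $\varphi^{k}(D_k)$ lies in the range of the relevant iterate of $\varphi$). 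Thus $\mu$ is invariant under each generator, i.e. $\mathcal{R}$-invariant, i.e. tail-invariant.

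The main obstacle is not any single computation but the bookkeeping around the partiality of $\varphi$ and the exceptional sets. One must pin down what "$\varphi$-invariant" means for a map defined only off $Y_{max}$: the hypothesis $\mu(Y_{max})=\mu(Y_{min})=0$ is precisely what allows one to treat $\varphi$ as an a.e.\ defined $\mu$-automorphism (equivalently, to extend it to a genuine Borel automorphism modulo a $\mu$-null set), and it rules out degenerate situations such as $\mu=\delta_x$ with $x\in Y_{max}$, where $\varphi$-invariance is vacuous while tail-invariance fails — so the lemma is false without it. One should also note that $\mu(Y_{max})=\mu(Y_{min})=0$ forces the full saturations $\bigcup_{n\ge 0}\varphi^{-n}Y_{max}$ and $\bigcup_{n\ge 0}\varphi^{n}Y_{min}$ to be $\mu$-null, so that the identification $\mathcal{R}=\mathcal{R}_\varphi$ off these saturations really does transfer invariance in both directions. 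The measurability of $k(x,y)$ and of the pieces $D_k$ is routine and is the only other thing to check.
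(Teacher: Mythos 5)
The paper states this lemma without any proof, so there is no in-paper argument to compare yours against; on its own terms your proposal is correct and is essentially the standard ``adic orbits coincide with tail classes'' argument. Two small points are worth tightening. First, the assertion that $\mathcal{R}=\mathcal{R}_\varphi$ \emph{on all of} $Y_B$ is an overstatement: the Borel extension of $\varphi$ across $Y_{max}$ is arbitrary and can merge distinct tail classes, so equality only holds off the $\varphi$-saturations of $Y_{max}\cup Y_{min}$; fortunately your argument only uses the two containments you actually establish (the graph of the partial $\varphi$ lies in $\mathcal{R}$, and every tail-equivalent pair is joined by a genuine finite $\varphi$-power), so nothing breaks. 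Second, the paper never formally defines ``tail invariant measure''; its working definition (visible in the proof of Theorem \ref{inv1}) is that cylinders $[\ol e]$, $[\ol e']$ ending at the same vertex have equal measure, whereas you take invariance under all partial Borel bijections with graph in $\mathcal{R}$ as the definition. These are equivalent, but with the cylinder definition your backward direction shortens considerably and the appeal to Feldman--Moore becomes unnecessary: if $\ol e,\ol e'$ are two finite paths into the same $v\in V_n$ with lexicographic ranks $i$ and $j$ in the finite set $E(V_0,v)$, then the exchange map $[\ol e]\to[\ol e']$ is exactly $\varphi^{\,j-i}$ restricted to $[\ol e]$, with all intermediate iterates defined and non-extremal, so $\mu([\ol e'])=\mu([\ol e])$ follows directly from $\varphi$-invariance. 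Your Feldman--Moore decomposition into the Borel pieces $D_k$ is nevertheless valid and buys the stronger conclusion that $\mu$ is invariant under the full tail equivalence relation, not just under cylinder exchanges; your observation that the hypothesis $\mu(Y_{max})=\mu(Y_{min})=0$ is genuinely needed is also correct (e.g.\ counting measure on the single tail class of a maximal path is tail invariant but not $\varphi$-invariant).
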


The following result shows that for any aperiodic Borel automorphism of a standard Borel space there exists a generalized Bratteli-Vershik model.

\begin{theorem}[\cite{BezuglyiDooleyKwiatkowski_2006}]
\label{thm borel-bratteli} Let $T$ be an aperiodic Borel
 automorphism acting
on a standard Borel space $(X, \B)$. Then there exists a 
generalized ordered Bratteli diagram $B=(V,E,\geq)$ and a Vershik automorphism 
$\varphi : X_B \to X_B$ such that $(X, T)$ is Borel isomorphic to 
$(X_B,\varphi)$.
\end{theorem}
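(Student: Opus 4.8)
The plan is to reproduce, in the Borel category, the Kakutani--Rokhlin tower construction that yields a Bratteli--Vershik model for a Cantor minimal system, replacing compactness arguments by the descriptive set theory of countable Borel equivalence relations. The starting point is a \emph{vanishing sequence of complete sections}: using aperiodicity of $T$ together with the Borel marker lemma applied to the (aperiodic, countable) orbit equivalence relation $E_X(\langle T\rangle)$, I would produce a decreasing sequence of Borel complete sections $X = C_0 \supseteq C_1 \supseteq C_2 \supseteq \cdots$ (Definition \ref{section}) that meets every $T$-orbit infinitely often in both directions and satisfies $\bigcap_n C_n = \emptyset$. Because $T$ is aperiodic and each $C_n$ is a complete section, every point is recurrent to $C_n$, so the first-return time to $C_n$ is a finite Borel function and the induced return map on $C_n$ is a Borel automorphism. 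This is precisely the input needed to manufacture a Kakutani--Rokhlin partition at each stage.

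Next I would turn each $C_n$ into a tower partition. The return-time function decomposes $C_n$ into Borel pieces of constant return height, and over each such piece $T$ builds a column (a Rokhlin tower); collecting the columns gives a Borel partition $\xi_n$ of $X$ into towers with base $C_n$. Arranging $C_{n+1}\subseteq C_n$ and, after telescoping (Definition \ref{tele}), aligning the tower levels, one gets that $\xi_{n+1}$ refines $\xi_n$ and each column at stage $n+1$ traverses finitely many columns of stage $n$. I would then read off a generalized Bratteli diagram $B=(V,E)$: the vertex set $V_n$ indexes the columns of $\xi_n$ (countable), and for a column $w$ at level $n+1$ and a column $v$ at level $n$ the number of edges $f^{(n)}_{w,v}$ is the number of times the tower $w$ passes through the tower $v$. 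Each row of the resulting incidence matrix $F_n$ is finitely supported and each in-degree $|r^{-1}(w)|$ is finite because return times are finite, so the axioms of Definition \ref{def GBD} hold. The order on $r^{-1}(w)$ is inherited from the vertical ordering of tower levels, making $B$ a generalized ordered Bratteli diagram.

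With the diagram in hand, I would define the coding map $\pi\colon X \to Y_B$ by sending $x$ to the path recording, at each level $n$, which column of $\xi_n$ contains $x$ and its height therein; compatibility of the partitions guarantees that consecutive coordinates form an edge path. Injectivity of $\pi$ follows from $\bigcap_n C_n = \emptyset$ (equivalently, $\bigcup_n \xi_n$ separates points and generates $\B$), surjectivity from the structure of the towers, and Borel measurability from the fact that each coordinate of $\pi$ is a Borel function; thus $\pi$ is a Borel isomorphism onto $Y_B$. Finally, the identification of $T$ with the operation ``move one level up the tower, and at the top pass to the base of the next column'' is exactly the successor operation in the partial lexicographic order, so $\pi \circ T = \varphi \circ \pi$, where $\varphi$ is the Vershik map of Definition \ref{VM}. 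This yields the required Borel isomorphism of $(X,T)$ with $(Y_B,\varphi)$.

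The main obstacle is twofold. First is the construction of the vanishing sequence of complete sections in the Borel (non-compact, non-measure-preserving) setting: this is where compactness is genuinely unavailable and one must invoke the Borel marker lemma for aperiodic CBERs, together with a careful refinement ensuring the bases nest and the tower levels align so that $\xi_{n+1}$ truly refines $\xi_n$. Second is the treatment of the boundary paths $Y_{max}$ and $Y_{min}$: the formula for $\varphi$ in Definition \ref{VM} is only defined off $Y_{max}$, and extending it to a genuine Borel automorphism matching $T$ everywhere requires arranging $|Y_{max}| = |Y_{min}|$ and fixing the bijection on these sets. This is handled using aperiodicity alone---maximal and minimal paths correspond to orbits escaping every $C_n$ in one direction---rather than any minimality or unique-ergodicity hypothesis.
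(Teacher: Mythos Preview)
The paper does not itself prove this theorem; it is quoted from \cite{BezuglyiDooleyKwiatkowski_2006} and used as background. Your outline is essentially the construction of that reference, and it also matches the concrete instance the paper does prove (Theorem~\ref{isom1}): a vanishing sequence of markers, Kakutani--Rokhlin towers over each marker set, a generalized Bratteli diagram read off from the tower refinement data, and a coding map intertwining $T$ with the Vershik successor.

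One point deserves correction. You write that injectivity of $\pi$ follows from $\bigcap_n C_n=\emptyset$, ``equivalently, $\bigcup_n \xi_n$ separates points and generates $\B$.'' These are \emph{not} equivalent: the vanishing intersection only says each point eventually leaves the base, not that the atoms of $\xi_n$ shrink to singletons. Two points in the same column at every stage would have the same image under $\pi$. The remedy, which appears explicitly in the paper's proof of Theorem~\ref{isom1} and in \cite{BezuglyiDooleyKwiatkowski_2006}, is an additional horizontal refinement: at each stage one subdivides each tower into Borel (there, clopen) subtowers of the same height and small diameter, so that $\sup_{v,j}\mathrm{diam}\,T^j(A_n(v))\to 0$. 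Only with this extra step does $\bigcup_n\xi_n$ generate $\B$ and is $\pi$ injective. Apart from this, your plan is sound and aligned with the cited source.
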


\begin{example}\label{read} \textit{Substitution read on a stationary Bratteli diagram}: Let $B=(V,E,\geq)$ be a generalized  stationary ordered Bratteli diagram, and let $\A$ be a countably infinite alphabet. For each $n \in \N_0$, we label the set of vertices $V_n$ by letters in $\A$. We denote by $v(a,n)$ the vertex corresponding to letter $a \in \A$ on level $n$. Thus, for all $n \in \N_0$, we can write $V_n = \{v(a,n); a \in \A\}$. For a fixed $n > 0$ and $a \in V_n$, we consider the finite set $r^{-1}(v(a,n))$, i.e., the set of all edges with range $v(a,n)$. This is an ordered finite set. We use the order to write 
$$r^{-1}(v(a,n)) = (e_1,e_2,...,e_m)
$$ where $e_1 < e_2 <... <e_m$.  Note that $B$ is a stationary diagram, hence the above set is independent of $n$. Let $a_i \in \A$ be such that $s(e_i)$ is labeled by $a_i$ for $i \in \{1,2,...,m\}$. It defines the ordered set 
 $(a_1,a_2,...,a_m)$ of these labels.  This ordered set may contain possible repetitions. Then the  map $a \mapsto a_1a_2 \cdots a_m$ from $\A$ to $\A^+$ defines a substitution on $\A$ which is called the \textit{substitution read on} 
 a stationary ordered generalized Bratteli diagram $B=(V,E,\geq)$. \end{example} 

\begin{remark}\label{paths} We provide here an alternative description for infinite paths in a generalized Bratteli diagram $B = (V, E, \geq)$
which will be used below. We set $h(0, v) =1$ for all $v \in V_0$, and  for 
$v \in V_1$, define $h(1, v) = |r^{-1}(v)|$. 
For $v \in V_n$, $n> 1$,  we define by induction
\begin{equation}\label{height}
    h(n, v) = \underset{w \in s(r^{-1}(v))}{\sum} |E(w, v)| h(n-1, w).
\end{equation} 
Let $v \in V_n $.  All finite paths 
in the set $E(V_0, v)$ are comparable under the lexicographical order described above. This allows us to enumerate the elements of $E(V_0, v)$ from $0$ to $h(n, v) - 1$. In this enumeration, we allot $0$ to the minimal path and $h(n, v) - 1$ to the maximal path. 

Let $y = (e_0,e_1 ... )$ be an infinite path in $Y_B$. Consider a sequence $(Q_n)_{n \in \N}$ of growing finite paths defined by $y$, 
$$Q_n = (e_0, . . . , e_{n-1}) \in E(V_0, r(e_{n-1})).
$$ 
This allows us to identify every $Q_n$ with a pair $(i_n, v_n)$ where $v_n = r(e_{n-1})$ and $i_n \in [0, h(n, v_n) - 1]$ is the order given to $Q_n$ in the set $E(V_0, v_n)$. Thus, for every $y = (e_0,e_1...) \in Y_B$, we can obtain a unique identifier in the form of an infinite sequence $(i_n, v_n)$ where $v_n = r (e_{n-1})$ and $i_n \in [0, h(n, v_n) - 1]$, $n \in \N$.

\end{remark}

\subsection{Stationary generalized Bratteli-Vershik model for substitutions on infinite alphabet}\label{BBD1} 
In this section, we provide an algorithm to construct a generalized stationary  Bratteli-Vershik model for a bounded size left determined substitution $\sigma$ on a countably infinite alphabet $\A$. Let $(X_\sigma,T)$ be the subshift associated with $\sigma$. We use Corollary \ref{CS 2} to obtain a nested sequence of complete sections for $(X_\sigma,T)$ 
$$ 
X_\sigma = A_0 \supset A_1 \supset A_2 \supset A_3....\,\,\,. 
$$ 
Here  the sets $A_n$ are defined as follows: 
$$ 
A_n = \bigsqcup_{a \in \A}  [\sigma^n (a)], \quad n \geq 0.
$$ 
For $n \in \N$, using (\ref{tower 2}), 
we obtain the partition $\xi_n$ of $X_\sigma$:  
\begin{equation}\label{eq 6.3.1}
    \xi_n = \underset{a \in \A}{\bigsqcup} \xi_n(a),
\end{equation} 
where
\begin{equation}\label{eq 6.3.2}
  \xi_n(a) = \{T^i [\sigma^n (a)] : i = 0,..., (h^n_{a} -1)\},\,\,\,\, a \in \A\,, \,\,\, h_a = |\sigma(a)|.
\end{equation} We also define
$$
\xi_0 = \underset{a \in \A}{\bigsqcup} [a].
$$ 
Set $V_0 := \{[a]\}_{a \in \A}$. Each element in the set $V_0$  corresponds to 
a vertex at level $0$ of the generalized Bratteli diagram. 
We  denote by 
$v_{a,0}$ the vertex in $V_0$ corresponding to the cylinder set $[a], a \in \A$. 
 Similarly, we denote by
$$
V_1 := \{[\sigma (a)]\}_{a \in \A}.
$$ 
The set $V_1$ is identified with cylinder sets of the form $[\sigma (a)]
$ for $a \in \A$.  The set $[\sigma(a)]$ corresponds to a vertex $v_{a,1} \in 
V_1$. Note that  $[\sigma (a)]$ is the base of the tower $\xi_1(a)$.  Then relation (\ref{eq 6.3.2}) (for $n = 1$) is represented as
\begin{equation*}\label{eq 6.3.3}
    \xi_1(v_{a,1}) : = \xi_1(a) = \{T^i [\sigma (a)] : i = 0,..., (h_a -1)\},\,\,\,\, v_{a,1} \in V_1,
\end{equation*} and  then (\ref{eq 6.3.1}) (for $n = 1$) as
\begin{equation*}\label{eq 6.3.4}
    \xi_1 = \underset{v \in V_1}{\bigsqcup} \xi_1(v). 
\end{equation*} 
We apply this construction for every $A_n$ in the sequence above to obtain the corresponding partition $\xi_n$ of towers $\xi_n(v)$ of height $h^n_{v} -1$. Note that Remark \ref{refine 2} implies that each $\xi_{n+1}$ refines $\xi_n$ and $\bigcup_{n \in \N} \xi_n$ generates the $\sigma$-algebra $\B$ of Borel sets in $X_\sigma$. With the help of sequence of partitions $(\xi_n)_{n\in \N_0}$ we can start constructing the generalized Bratteli diagram $B=(V,E)$. 

Recall, we denoted by $v_{b,0}$ a vertex in $V_0$ corresponding to cylinder set $[b]$ for $b \in \A$. To define $E_1$, we take $v_{a,1} \in V_1$ and corresponding cylinder set $[\sigma (a)]$. Here we are working with a nested sequence of K-R towers. Thus every cylinder set of the form $[\sigma (a)]$ lies in a cylinder set of the form $[b_1]$ for some $b_1 \in \A$. 

Since the cylinder set $[\sigma (a)]$ is base of a tower $(\xi_1(a))$ of height $h_{a,1} -1$, each level of this tower lies in a cylinder set of the form $[b_i]$ for some $b_i \in \A$ (which were used to construct the vertex of the $0$th level). Let $\{[b_1],[b_2]...,[b_{h_a}]\}$, be the list of these cylinder sets from bottom to top. We connect $v_{a,1} \in V_1$ to vertices $v_{b_k, 0}$ in $V_0$, for $k \in \{1,..,h_a\}$ with an edge. We do this for each $v \in V_1$, to define $E_1$. To introduce a linear order on $r^{-1} v $ for $v \in V_1$, we enumerate them from $1$ to $h_v$ in the order they appear in the tower going bottom to top.

We repeat this procedure for every $n \in \N_0$ to obtain the corresponding $V_n$ and partially ordered $E_n$. This gives us an ordered generalized Bratteli diagram $B(V,E,\geq)$, with $V=\bigsqcup_{i\in\N_0} V_i$ and $E=\bigsqcup_{i\in\N_0} E_i$. It is easy to check that this diagram satisfies the Definition \ref{def GBD}. The fact that $\big|\underset{n\in \N_0}{\bigcap} A_n\big| = \aleph_0$ implies that the maximal and minimal paths in $B$ are countably infinite. Note that every infinite path $y \in Y_B$ is completely determined by the infinite sequence ${(i_n, v_n)}_{n}$,
$v_n \in V_n$, $0 \leq i_n \leq h(n, v_n)-1$ such that $T^{i_{n +1}} ([\sigma^{n +1}(a_{n+1})]) \subset T^{i_n} ([\sigma^{n}(a_{n})])$,  for every $n \in \N_0$, where the cylinder set $[\sigma^{n}(a_{n})]$ corresponds to $v_n \in V_n$ and so on. 

We claim that the generalized Bratteli diagram $B = (V, E, \geq)$ constructed above is a stationary diagram and $\sigma$ is in fact the substitution read on $B = (V, E, \geq)$. To see this fix $n \in \N$ and consider the vertex sets $V_{n-1}$ and $V_n$ at level $(n-1)$ and $n$ respectively. In the construction above, the partition $\xi_n$ is used to obtain the edge set $E_n$. Recall that we identified vertices at level $n$ with sets of the form $[\sigma^n (a)]$ for $a\in \A$. These sets are the base of the tower $\xi_n(a)$ (see (\ref{eq 6.3.1})). For a vertex $v \in V_n$, write $v = V(a_v, n)$ (see Remark \ref{read}) to illustrate that $v$ is a vertex on level $n$ corresponding to letter $a_v \in \A$.  

There exists $a_1 \in \A$ such that we have the following inclusion, $[\sigma^n (a_v)] \subset [\sigma^{n-1} (a_1)]$. Here $[\sigma^{n-1} (a_1)]$ is the base of the tower $\xi_{n-1}(a_1)$ in partition $\xi_{n-1}$. Now we consider the set of edges with range $v$; i.e. the set $r^{-1}(v)$. By the above construction, this set is fully ordered. We list its elements in increasing order $(e_1,...,e_s)$. For $i\in \{1,..,s\}$, let $a_i$ be the label of the vertex (see Remark \ref{read}) on level $V_{n-1}$ which is the source of edge $e_i$. We make an ordered list of these labels $(a_1,..,a_s)$ (i.e. same order as $e_i$'s). 

By the above construction this means that as we move from bottom to top in the $T$-tower $\xi_n(a_v)$ (with base $[\sigma^n (a_v)]$) it intersects with $T$-towers $\{\xi_1(a_1),...,\xi_i(a_s)\}$ in the given order (i.e. $1$ to $s$). This in turn implies $$
\sigma^n(a_v) = \sigma^{n-1}a_1....\sigma^{n-1}a_s.
$$ Hence we get $\sigma(a_v) = a_1....a_s$. Note that we will get the same expression for $\sigma(a_v)$ using any level $n \in \N$, hence the diagram $B = (V, E, \geq)$ is stationary. Moreover, the substitution we recovered above (from the diagram) matches the substitution $\sigma$. This implies that $\sigma$ is the substitution read on $B = (V, E, \geq)$. 

Now we are ready to prove Theorem \ref{Main_1} which states that the subshift $(X_{\sigma}, T)$ corresponding to a bounded size left determined substitution $\sigma$ on an infinite alphabet can be realized as a Vershik transformation acting on the path space of an ordered stationary generalized Bratteli diagram. We recall the statement below.

\begin{theorem}\label{isom1} Let $\sigma $ be a bounded size left determined substitution on a countably infinite alphabet and $(X_\sigma,T)$ be the corresponding subshift. Then there exists a stationary ordered generalized Bratteli diagram $B = (V, E, \geq)$ and a Vershik map $\varphi : Y_B \rightarrow Y_B$ such that $(X_\sigma, T)$ is isomorphic to $(Y_B, \varphi)$.

\end{theorem}

\noindent\textit{Proof}. We use Corollary \ref{CS 2}, to obtain a nested sequence of complete sections $(A_n)_{n\in \N_0}$ corresponding to $(X_\sigma,T)$. As described above for $n \in \N_0$, let $$\xi_n = \xi_n(v), v\in V_n$$ be the partition of $X_\sigma$ corresponding to $A_n = \underset{a \in \A}{\bigsqcup}  [\sigma^n(a)]$. As before by identifying $V_n \ni v_{a,n} \sim [\sigma^n (a)]$. For convenience, we write $A_n(v) : = [\sigma^n (a)] $. Thus in general for $v \in V_n$, $n \in \N_0$, we write

$$
\xi_n(v) = \{T^j A_n(v) : j  = 0,...,h(n,v) - 1\}.$$ Here $h(n,v) = h^n(a)$ (and $[\sigma^n(a)]$ is the set corresponding to $v \in V_n$). Remark \ref{refine 2}, implies that atoms of $(\xi_n, n\in \N_0)$ generate the Borel structure of $X_{\sigma}$ and are clopen. In other words, sets of the form  $T^j(A_n(v)),\,\, v\in V_n,\,j = 0,...,h(n,v)-1, n\in \N_0$, generate the Borel structure of $X_{\sigma}$.

For a fixed $n \in \N_0$ and $\epsilon >0$, we can cut each tower $\xi_n(v)$ into disjoint clopen towers of the same height with the diameter of each new tower less than $\epsilon$. Thus we can assume, 
\begin{equation}\label{dia1}
    \underset{0\leq j < h(n,v), v\in V_n}{\textrm{sup}} \,\,[\textrm{diam}\, T^j (A_n(v))]\rightarrow 0,\,\, n\rightarrow \infty.
\end{equation} Here we use the metric defined in (\ref{metric}). 

We apply the construction discussed above to the sets $(A_n)$ and $T$ to obtain a stationary ordered generalized Bratteli diagram $B = (V, E, \geq)$. We define a map $\varphi$ (Vershik map) on the path space $Y_B$ as follows: On $Y_B \setminus Y_{max}$, the $\varphi$ is defined using the algorithm in Definition \ref{VM}. Thus $\varphi$ is a homeomorphism from $Y_B \setminus Y_{max}$ to $Y_B \setminus Y_{min}$. By construction, the cardinality of $Y_{max}$ is equal to the cardinality of $Y_{min}$ (both are countably infinite). Hence we extend $\varphi$ to entire $Y_B$ by defining a bijective Borel transformation mapping the set of maximal paths to the set of minimal paths; i.e. $\varphi(Y_{max}) = Y_{min}$. To do so, consider a maximal path $y \in Y_{max}$. Note that $y$ determines a unique sequence ${(i_n, v_n)}_n$, where $v_n \in V_n$ and $ i_n = h(n, v_n) - 1 $ (since $y$ is a maximal path) for every $n \in \N_0$. More over $T^{i_{n +1}} (A_{n +1}(v_{n+1})) \subset T^{i_n} (A_n(v_{n})),$
for every $n \in \N_0$. Thus $y\in Y_{max}$ corresponds to a unique point $x \in X_{\sigma}$ where $\{x\} = \underset{n}{\bigcap}\,T^{i_n} A_{n}(v_n)$. Uniqueness of $x$ follows from (\ref{dia1}).

Let $\tilde{x} = T (x)$; since $T$ takes points at the top of a KR tower to points at the bottom of the tower, $\tilde{x}$ lies in the intersections of the base of the nested sequence of Kakutani-Rokhlin towers. Again $\tilde{x}$ determines a unique sequence ${(i_n, v_n)}_n$, where $v_n \in V_n$ and $ i_n = 0$ (since $x$ lies in bottom of the towers) for every $n \in \N_0$ and $\{\tilde{x}\} = \underset{n}{\bigcap}\, A_{n}(v_n)$. Let $\tilde{y}$ be the unique infinite path in $Y_B$ corresponding to $\tilde{x}$ then by construction $\tilde{y} \in Y_{min}$. We define $\varphi(y) = \tilde{y}$. This gives a bijective Borel transformation mapping $Y_{max}$ to $Y_{min}$. The bijection follows from
the bijection of $T$. This together with the definition of $\varphi: Y_B\setminus Y_{max} \rightarrow Y_B\setminus Y_{min}$ (see Definition \ref{VM}) gives us a Borel automorphism $\varphi: Y_B \rightarrow Y_B$ which we call the Vershik map.

Now we show that $(X_\sigma,T)$ is Borel isomorphic to $(Y_B,\varphi)$. To do this we define a map $f: X_{\sigma} \rightarrow Y_B$  as follows: For $x \in X_{\sigma}$, choose the unique sequence ${(i_n, v_n)}_n$, $0 \leq i_n \leq h(n, v_n) - 1 $, $v_n \in V_n$, such that \begin{equation}\label{inc1}
    T^{i_{n +1}} (A_{n +1}(v_{n+1})) \subset T^{i_n} (A_{n}(v_{n})),
\end{equation} for every $n \in \N_0$ and $\{x\} = \underset{n}{\bigcap}\,T^{i_n} A_{n}(v_n)$. As mentioned in Remark \ref{paths}, such a sequence defines a unique infinite path $y \in Y_B$. We set $f(x) = y$. Observe that $f$ is a continuous injective map. From the construction of $B = (V,E, \geq)$ and definition of Vershik transformation $\varphi$, acting on $Y_B$, we get that $f(T x) = \varphi(f x)$, $x \in X_{\sigma}$. We claim that $f$ is surjective. To see this, let $y \in Y_B$ be an infinite path. Then $y$ defines an infinite sequence ${(i_n, v_n)}_n$, $0 \leq i_n \leq h(n, v_n) - 1 $, $v_n \in V_n$, (see Remark \ref{paths}). Thus by (\ref{dia1}) and (\ref{inc1}), we obtain a single point $x$ such that $f(x) = y$.   \hfill{$\square$} 

In Appendix \ref{BBD2}, we provide an alternate construction of a generalized non-stationary  Bratteli-Vershik model for a subshift associated with left determined substitution $\sigma$ on a countably infinite alphabet $\A$. We do not assume that $\sigma$ is bounded size, and the generalized B-V model, in this case, is not necessarily stationary (See Theorem \ref{isom2}).

\section{Invariant measure for infinite substitution via Bratteli diagrams}\label{sect inv} 

For a stationary generalized Bratteli diagram with an irreducible, aperiodic, and recurrent countable incidence matrix (see Definition \ref{def PF}), there exists a direct method for finding  a tail invariant measure  on the path space
of the diagram. 
In the previous section, we constructed an isomorphism between the subshift (associated with bounded size left determined substitution on a countably infinite alphabet) and the Vershik map on path space of the corresponding stationary generalized Bratteli diagram. Therefore this method will also provide a formula for a shift-invariant measure when the subshift is associated with a bounded size left determined substitution on countably infinite alphabets.

\subsection{Tail invariant measure for stationary Bratteli diagrams}\label{tail_inv} Recall that a generalized Bratteli diagram $B = B(F_n) $ is called
\textit{stationary} if, for every $n \in \N_0$, the incidence matrices $F_n = F$. Thus, the structure of the diagram remains stationary, and the set of edges  $E_n$ between $V_n$ and $V_{n+1}$ is independent of $n \in \N_0$. 
In the rest of the paper,
 we will identify the set of vertices $V_n$ with the integers $\Z$ for all $n$. 
 
 We discuss below some definitions and results from the theory of 
 non-negative infinite matrices. We refer the reader to Chapter $7$ of \cite{Kitchens1998} for a detailed discussion of the Perron-Frobenius 
 theory of such matrices. 

\begin{definition}\label{def PF} \noindent $(a)$ We call a matrix $F = (f_{ij})$  \textit{infinite} if its rows and columns are indexed by the same countably 
infinite set. Assuming that all matrices $F^n$ are defined (i.e., they have finite entries),  we denote the entries of $F^n$ by $f^{(n)}_{ij} >0$.

\vspace{2mm}

\noindent $(b)$ An infinite non-negative matrix $F = (f_{ij})$, $i,j \in \Z$, is called \textit{irreducible} if, for all $i, j \in \Z$, there exists some $n \in \N_0$ 
such that $f^{(n)}_{ij} >0$. A stationary Bratteli diagram $B =B(F)$ is called an \textit{irreducible} diagram if its incidence matrix $F$ is irreducible. 

\vspace{2mm}

\noindent $(c)$ An irreducible matrix $F$ is said to have period $p$ if, for all vertices $i \in \Z$, 
$$
p = \gcd \{ t : f^{(t)}_{ii} >0\}.
$$ 
An irreducible matrix $F$ is called \textit{aperiodic} if $p = 1$.

\vspace{2mm}

\noindent $(d)$ For an irreducible, aperiodic, non-negative, infinite matrix $F$ the limit defined by 
$$
\lambda = \lim_{n\to \infty} (f^{(n)}_{ii})^{\frac{1}{n}}
$$ 
exists and is independent of the choice of $i$. The value of the limit $\lambda$ is called the \textit{Perron-Frobenius eigenvalue} of $F$. We will 
consider only the case of finite Perron-Frobenius eigenvalue $\lambda$.

\vspace{2mm}

\noindent $(e)$ It is said that an irreducible, aperiodic, non-negative, infinite matrix $F$ is 
\textit{transient} if 
$$
\sum_{n\geq 1} f^{(n)}_{ij} \lambda^{-n} < \infty.
$$

\vspace{2mm}

\noindent $(f)$ It is said that an irreducible, aperiodic, non-negative, infinite matrix $F$ is 
\textit{recurrent} if 
$$
\sum_{n\geq 1} f^{(n)}_{ij} \lambda^{-n} = \infty.
$$

\vspace{2mm}

\noindent $(g)$ For an irreducible, aperiodic, recurrent, non-negative, infinite matrix $F$, let $t_{ij}(1) = f_{ij}$, and for $n> 1$,
$$
t_{ij}(n+1) =\sum_{k \neq i} t_{ik}(n) f_{kj}.
$$ 
It is said the matrix $F$ to be \textit{null-recurrent} if
$$
\sum_{n\geq 1} n t_{ii}(n) \lambda^{-n} < \infty;
$$
otherwise,  $F$ is called \textit{positive recurrent}.
\end{definition} The theorem below is a generalization of the classic Perron-Frobenius theory to non-negative infinite matrices. We refer the reader to \cite{Kitchens1998} (Theorem $7.1.3$) for a detailed proof.

\begin{theorem}[Generalized Perron-Frobenius theorem] \label{PF1}
Let  $F$ be an irreducible, aperiodic, recurrent, non-negative, infinite matrix. Then there exists a Perron-Frobenius
 eigenvalue 
 $$\lambda = 
 \lim_{n\to \infty} (f^{(n)}_{ij})^{\frac{1}{n}} >0
 $$ 
(assumed  to be finite) such that:

\vspace{2mm}

\noindent $(a)$ there exist strictly positive left $\ell$ and right $r$  eigenvectors for $\lambda$;

\vspace{2mm}

\noindent $(b)$ both $\ell$ and $r$ are unique  up to multiplication by constants;

\vspace{2mm}

\noindent $(c)$ $F$ is positive recurrent if and  only if $\ell \cdot r = \sum_i \ell_i r_i <\infty$;

\vspace{2mm}

\noindent $(d)$ if $F$ is null-recurrent, then $\displaystyle{\lim_{n\to \infty} F^n\lambda^{-n}=0}$;

\vspace{2mm}

\noindent $(e)$ if $F$ is positive  recurrent, then $\displaystyle{\lim_{n\to \infty} F^n \lambda^{-n}= r \ell}$ (normalized so that $\ell r =\textbf{1}$).
\end{theorem}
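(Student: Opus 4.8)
The plan is to deduce Theorem~\ref{PF1} from the classical renewal theory of irreducible recurrent Markov chains, by reducing $F$ to a stochastic matrix via a diagonal similarity and then invoking the appropriate limit theorems; the analytic estimates involved are carried out in Chapter~7 of \cite{Kitchens1998}, so the aim here is only to lay out the skeleton. Write $R=\lambda^{-1}$; this is the common radius of convergence of the generating functions $F_{ij}(z):=\sum_{n\ge1}f^{(n)}_{ij}z^{n}$, the fact that the radius is the same for every pair $(i,j)$ being the usual consequence of irreducibility: choosing $k,m$ with $f^{(k)}_{is},f^{(m)}_{si}>0$ gives $f^{(n+k+m)}_{ss}\ge f^{(k)}_{is}f^{(n)}_{ii}f^{(m)}_{si}$, and the symmetric inequality forces the exponential growth rates to coincide. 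In this language the recurrence hypothesis says exactly that $F_{ss}(R)=\sum_n f^{(n)}_{ss}\lambda^{-n}=\infty$. Irreducibility, aperiodicity, and recurrence are preserved by the diagonal similarity transformations used below.

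For parts (a) and (b), I would construct the eigenvectors explicitly using taboo generating functions. Fix a reference vertex $s$, let ${}_sf^{(n)}_{ij}$ count the walks of length $n$ from $i$ to $j$ that avoid $s$ at all intermediate times, and put
\[
r_i \;:=\; \sum_{n\ge1}{}_sf^{(n)}_{is}\,R^{n}\ \ (r_s:=1),\qquad \ell_j \;:=\; \sum_{n\ge1}{}_sf^{(n)}_{sj}\,R^{n}\ \ (\ell_s:=1).
\]
A standard last-exit (respectively first-passage) decomposition of walks at $s$ gives the eigenvector identities $Fr=\lambda r$ and $\ell F=\lambda\ell$, and strict positivity of every entry is immediate from irreducibility. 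The single point that genuinely uses recurrence is that the series defining $r_i$ and $\ell_j$ converge at the critical radius $R$; this follows from the renewal relation between these taboo generating functions and $F_{is}(z),F_{ss}(z)$, together with the fact that for a recurrent matrix the first-return generating function takes the value $1$ at $z=R$. For uniqueness, normalize $r$ and introduce $P=(p_{ij})$ with $p_{ij}:=R\,f_{ij}\,r_j/r_i$; by the right-eigenvector equation $P$ is stochastic, and it inherits irreducibility, aperiodicity, and recurrence from $F$. A non-negative right eigenvector $x$ of $F$ for $\lambda$ then gives a harmonic function $x_i/r_i$ for the recurrent chain $P$, which must be constant, so $x$ is a scalar multiple of $r$; dually, a non-negative left eigenvector of $F$ gives, after the same change of variables, an invariant measure for $P$, and an irreducible recurrent chain has a unique invariant measure up to scaling. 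This proves (b).

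For (c)--(e), I would transport everything through $P$ using $p^{(n)}_{ij}=R^{n}f^{(n)}_{ij}\,r_j/r_i=\lambda^{-n}f^{(n)}_{ij}\,r_j/r_i$. A direct computation with the eigenvector equations shows that $\nu_j:=\ell_jr_j$ is an invariant measure for $P$ (and, by the previous paragraph, the only one up to scale). Hence $P$ is positive recurrent precisely when $\nu$ is summable, i.e.\ when $\sum_j\ell_jr_j<\infty$, and null-recurrent otherwise, which is statement (c). In the null-recurrent case the renewal / ratio-limit theorem (Orey's theorem) gives $p^{(n)}_{ij}\to0$ for all $i,j$, hence $\lambda^{-n}f^{(n)}_{ij}=(r_i/r_j)\,p^{(n)}_{ij}\to0$, i.e.\ $F^n\lambda^{-n}\to0$, which is (d). In the positive-recurrent aperiodic case the ergodic theorem for $P$ gives $p^{(n)}_{ij}\to\nu_j/\sum_k\nu_k$, and translating back,
\[
\lambda^{-n}f^{(n)}_{ij}\;=\;\frac{r_i}{r_j}\,p^{(n)}_{ij}\;\longrightarrow\;\frac{r_i}{r_j}\cdot\frac{\ell_jr_j}{\sum_k\ell_kr_k}\;=\;\frac{r_i\,\ell_j}{\sum_k\ell_kr_k},
\]
so $F^n\lambda^{-n}$ converges to the rank-one matrix $r\ell$ once $\ell$ and $r$ are rescaled so that $\ell r=\sum_k\ell_kr_k=1$; this is (e).

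The genuinely substantive points, for which I would simply cite \cite{Kitchens1998}, are two: the \emph{finiteness} of the eigenvector entries $r_i,\ell_j$ at the critical radius, which is essentially the whole analytic content of the recurrence hypothesis, and the limit behaviour of the stochastic matrix $P$ in the null-recurrent regime. Everything else — the eigenvector identities, the stochasticity and invariant measure of $P$, the bookkeeping of the diagonal similarity — is routine. I expect the convergence $p^{(n)}_{ij}\to0$ (Orey's theorem) to be the main obstacle in any fully self-contained treatment.
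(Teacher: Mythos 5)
The paper gives no proof of this statement at all: it is quoted from Kitchens \cite{Kitchens1998} (Theorem 7.1.3), and your sketch is a faithful reconstruction of exactly the Vere-Jones-style argument given there --- common convergence radius via irreducibility, eigenvectors from taboo/first-passage generating functions whose convergence at $z=R$ is precisely the content of recurrence, and uniqueness plus the limits in (c)--(e) via the diagonal similarity $p_{ij}=Rf_{ij}r_j/r_i$ to a stochastic matrix together with the ergodic and Orey limit theorems. So the proposal is correct and takes the same route as the paper's cited source; the two points you defer (finiteness of the taboo series at the critical radius, and $p^{(n)}_{ij}\to 0$ in the null-recurrent case) are indeed where the real analytic work lies.
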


\begin{remark}\label{aper} The aperiodicity assumption can be removed from the statement of Theorem \ref{PF1} (see \cite{Kitchens1998} Lemma $7.1.37$ and Lemma $7.1.38$). Hence all the results in this section are true without assuming aperiodicity. But to be consistent with the literature we have stated the results keeping the aperiodicity assumption. 

\end{remark}

The following result (proved in \cite{Bezuglyi_Jorgensen_2021}) provides an explicit formula for a tail invariant measure on the path space of a stationary generalized Bratteli diagram. Since this result is crucial for the rest of the paper, we reproduce the proof in Appendix \ref{proof}.

\begin{theorem} [Theorem 2.20, \cite{Bezuglyi_Jorgensen_2021}] \label{inv1}  Let $B = B(F)$ be a stationary generalized Bratteli diagram such that the incidence matrix $F$ is irreducible, aperiodic and recurrent. Then

\noindent $(1)$ there exists a tail invariant measure $\mu$ on the path space $Y_B$,

\noindent $(2)$  the measure $\mu$ is finite if and only if the left 
Perron-Frobenius eigenvector $\ell = (\ell_v)$ has the property $\sum_{v} \ell_v < \infty$.
\end{theorem}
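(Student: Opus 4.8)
The plan is to construct the measure $\mu$ directly on cylinder sets of $Y_B$ using the right Perron--Frobenius eigenvector $r=(r_v)$ supplied by Theorem \ref{PF1}, and then to verify both that this prescription is consistent (so it extends to a genuine Borel measure by Kolmogorov-type extension on the zero-dimensional space $Y_B$) and that it is tail invariant. Recall from Remark \ref{paths} that a finite path $\ol e(v_0,w)$ with $w\in V_n$ lies in a cylinder set $[\ol e(v_0,w)]$, and that for fixed $w\in V_n$ the number of finite paths from $V_0$ to $w$ is $h(n,w)$. The natural definition is to set
$$
\mu\big([\ol e(v_0, w)]\big) = \lambda^{-n}\, r_w, \qquad w \in V_n,
$$
independently of which particular path $\ol e(v_0,w)$ with range $w$ we chose. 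First I would check additivity: a cylinder $[\ol e(v_0,w)]$ with $w\in V_n$ is the disjoint union, over vertices $u\in V_{n+1}$ and over edges $e\in E(w,u)$, of the cylinders obtained by appending $e$; there are $\sum_{u} f_{uw}$ such continuations counted with multiplicity (here $f_{uw}=|E(w,u)|$ are the entries of the stationary incidence matrix $F$, identifying $V_n\equiv\Z$). Summing the proposed values gives $\lambda^{-(n+1)}\sum_u f_{uw} r_u = \lambda^{-(n+1)}(Fr)_w = \lambda^{-(n+1)}\lambda r_w = \lambda^{-n} r_w$, which is exactly $\mu([\ol e(v_0,w)])$; here I used that $r$ is a right eigenvector, $Fr=\lambda r$. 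This consistency lets $\mu$ extend uniquely to a Borel measure on $Y_B$ by the standard extension procedure for premeasures defined on the clopen algebra of a zero-dimensional Polish space.

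Next I would prove tail invariance. Two cofinal paths $x,y\in Y_B$ with $x_i=y_i$ for all $i>n$ have initial segments $\ol e(v_0,w)$ and $\ol e'(v_0,w)$ ending at the same vertex $w=r(x_n)=r(y_n)\in V_{n+1}$. The tail equivalence relation $\mathcal R$ is generated by such ``swaps'' of initial segments with a common range vertex, and a Borel measure is tail invariant precisely when it assigns equal mass to any two cylinders $[\ol e(v_0,w)]$, $[\ol e'(v_0,w)]$ sharing the same terminal vertex $w$ (for every level $n$ and every $w\in V_{n+1}$). But by construction $\mu([\ol e(v_0,w)]) = \lambda^{-(n+1)} r_w = \mu([\ol e'(v_0,w)])$, so tail invariance is immediate; this proves part $(1)$.

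For part $(2)$, the total mass is
$$
\mu(Y_B) = \sum_{v\in V_1} \mu\big([\ol e(v_0,v)]\big)\cdot\big|\{\text{paths }V_0\to v\}\big|
= \sum_{v\in V_1} h(1,v)\, \lambda^{-1} r_v = \lambda^{-1}\sum_{v\in V_1}\Big(\sum_{u\in V_0} f_{vu}\Big) r_v,
$$
wait---more cleanly, partition $Y_B$ by the level-$1$ vertex through which a path passes: $Y_B=\bigsqcup_{v\in V_1}\bigsqcup_{\ol e(v_0,v)}[\ol e(v_0,v)]$, and the inner union over the $h(1,v)=\sum_{u}f_{vu}$ distinct initial edges into $v$ gives mass $(\sum_u f_{vu})\lambda^{-1}r_v$. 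Writing $\mathbf 1$ for the all-ones (column) vector, $\sum_{v\in V_1}(\sum_u f_{vu}) r_v = \langle \mathbf 1, Fr\rangle = \lambda\langle\mathbf 1, r\rangle = \lambda\sum_v r_v$, so $\mu(Y_B)=\sum_v r_v$. Hence $\mu$ is finite iff $\sum_v r_v<\infty$. To match the statement, which is phrased in terms of the \emph{left} eigenvector $\ell$, I would invoke the symmetry between a stationary generalized Bratteli diagram and its ``reversal''; equivalently, one checks that in the construction of Theorem \ref{isom1} the heights $h(n,v)$ grow like $\lambda^n \ell_v$ and the mass of a single tower column $T^j(A_n(v))$ is $\lambda^{-n} r_v$ up to normalization, so that a computation with the matrix $F$ acting on the left (on the height vector) replaces $r$ by $\ell$; the finiteness criterion then reads $\sum_v \ell_v<\infty$.

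The main obstacle I anticipate is not tail invariance (which is essentially formal once the cylinder values are pinned down by $Fr=\lambda r$) but rather being careful about \emph{which} eigenvector governs finiteness, since left and right eigenvectors play dual roles: the right eigenvector controls cylinder measures while the left eigenvector controls tower heights, and the precise bookkeeping of this duality---together with checking that the extension of $\mu$ from clopen sets to all Borel sets genuinely produces a $\sigma$-finite (or finite) measure and not an identically infinite one on proper subsets---is where the argument needs real care. Since this is reproduced from \cite{Bezuglyi_Jorgensen_2021}, I would follow their normalization conventions to ensure the statement comes out in terms of $\ell$ as claimed.
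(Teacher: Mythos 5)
Your overall strategy is exactly the paper's: assign to each cylinder $[\ol e(v_0,w)]$, $w\in V_n$, the value $\lambda^{-n}$ times an eigenvector entry at $w$, verify Kolmogorov consistency via the eigenvalue equation, observe that tail invariance is automatic because the value depends only on the terminal vertex, and compute the total mass as the sum of the eigenvector entries. The one genuine error is that you use the \emph{right} eigenvector $r$ where the construction forces the \emph{left} one. With the paper's indexing convention $f_{uw}=|E(w,u)|$ for $w\in V_n$, $u\in V_{n+1}$ (rows indexed by the upper level), the consistency identity you need is
$$
\lambda^{-(n+1)}\sum_{u\in V_{n+1}} f_{uw}\,x_u \;=\; \lambda^{-n}x_w ,
$$
i.e.\ $\sum_u x_u f_{uw}=\lambda x_w$, which is the left-eigenvector equation $xF=\lambda x$, not $(Fx)_w=\lambda x_w$ as you wrote; your identification $\sum_u f_{uw}r_u=(Fr)_w$ silently transposes $F$. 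For a concrete failure, take the ``one step forward, two steps back'' example of Section 8: there $r=(\ldots,1,1,1,\ldots)$ while $\ell=(\ldots,\tfrac14,\tfrac12,1,1,\tfrac12,\tfrac14,\ldots)$, and your prescription $\mu([\ol e(v_0,w)])=\lambda^{-n}r_w=\lambda^{-n}$ would require every column sum of $F$ to equal $\lambda=3$, which is false; so the premeasure is not additive and does not extend.

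Your closing attempt to repair this by ``reversal/duality'' also has the duality backwards: the height vector satisfies $h(n,\cdot)=F\,h(n-1,\cdot)$, so tower heights grow like $\lambda^n r_v$ (this is why the equal-row-sum case gives $r=\mathbf 1$ in Corollary \ref{inv2}), whereas the cylinder masses are governed by $\ell$. Once you replace $r$ by $\ell$ throughout and read the sums as $\ell F=\lambda\ell$, your argument coincides with the paper's proof, including the finiteness criterion $\mu(Y_B)=\sum_v\ell_v$ (which the paper gets more simply by summing $\mu(Y_B(w))=\ell_w$ over $w\in V_0$ rather than decomposing at level $1$).
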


A special case of a recurrent matrix is positive recurrent. If we assume that the incidence matrix $F$ is positive recurrent and has an equal row sum 
property, then the tail invariant measure defined above is always finite.

\begin{corollary}\label{inv2} Let $B = B(F)$ be a stationary generalized Bratteli diagram such that the incidence matrix $F$ is irreducible, aperiodic, and positive recurrent. Assume that $F$ has equal row sum property, i.e. for every $v, v' \in V$,
$$
\sum_{w \in V} f_{v, w} = \sum_{w \in V} f_{v', w}.
$$ 
Then the tail invariant measure $\mu$ (defined in (\ref{eq inv meas left})) on the path-space $Y_B$ is finite.
\end{corollary}

\begin{proof} Since $F$ is positive recurrent, we have 
$$\ell \cdot r = \sum_{v\in V} \ell_v r_v < \infty,
$$ where $r$ denotes the right eigenvector corresponding to $\lambda$. Moreover, the equal row sum property implies that we can take  $r = 
(...1,1, 1,...)$. Hence 
$$
\mu(Y_B)  = \sum_{i \in \Z} \ell_i < \infty.
$$ 
\end{proof}

\subsection{From tail invariant measure to shift-invariant measure} In Subsection \ref{tail_inv}, we have provided an explicit formula for a tail invariant (or invariant under the Vershik map) measure on a stationary 
generalized Bratteli diagram with an irreducible, aperiodic, and recurrent incidence matrix. In this subsection, we will use the isomorphism between the 
subshift (associated with a  bounded size left determined substitution 
on a countably infinite alphabet) and the Vershik map on path space of the corresponding stationary generalized ordered Bratteli diagram (see Theorem \ref{isom1}) to obtain a shift-invariant measure. 

Note that if $\sigma$ is the substitution read on a stationary generalized Bratteli diagram $B = B(F)$ (see Remark \ref{read}) then the infinite substitution matrix of $\sigma$ (see Definition \ref{inf sub}) is same as the incidence matrix $F$. Now we are ready to prove Theorem \ref{Main_2}. We recall the statement below.

\begin{theorem}\label{inv3} Let $\sigma $ be a bounded size left determined substitution on a countably infinite alphabet $\A$ and $(X_\sigma, T)$ be the corresponding subshift. Assume that the countably infinite substitution matrix $M$ is irreducible, aperiodic, and recurrent. Then

\vspace{2mm}

\noindent $(1)$  there exists a  shift-invariant measure $\nu$ on $X_\sigma$;

\vspace{2mm}

\noindent $(2)$ the measure $\nu$ is finite if and only if the Perron-Frobenius
 left eigenvector  $\ell = (\ell_i)_{i \in \Z}$ of the matrix $M$ has the property $\underset{i \in \Z}{\sum} \ell_i < \infty$. . 

\end{theorem}

\begin{proof} By Theorem \ref{isom1}, there exists a stationary generalized Bratteli diagram $B(F)$ such that $(X_\sigma,T)$ is isomorphic to the dynamical system $(Y_B,\varphi)$. Let  $f: (X_{\sigma}, T_\sigma) \rightarrow (Y_B, \varphi)$ implement this isomorphism. Also, by construction,  the incidence matrix $F$ of the diagram coincides with the substitution matrix $M$ (see Section \ref{BBD1}). Since  $M$ is irreducible and recurrent, there exists a 
tail invariant measure $\mu$ on the path space $Y_B$ (Theorem \ref{inv1}). 
We define the measure $\nu$ on $X_{\sigma}$ by setting  
\begin{equation}\label{mes}
    \nu (A) = \mu \circ f (A)
\end{equation} 
where  $A \subset X_{\sigma}$ is a Borel set. 
The rest of the proof is obvious. 
\end{proof}

\begin{corollary}\label{inv4} Let $\sigma $ be a constant length left determined substitution on a countably infinite alphabet such that the substitution matrix $M$ is irreducible, aperiodic, and positive recurrent. Then the shift-invariant measure $\nu$ on $X_\sigma$ defined in (\ref{mes}) is finite.
\end{corollary}

\begin{remark}\label{erg} In a sequel to this paper we show that if $B = B(F)$ be a stationary generalized Bratteli diagram with irreducible, aperiodic and \textit{positive} recurrent incidence matrix $F$ then the shift-invariant \textit{finite} measure (defined in (\ref{eq inv meas left})) on the path-space $Y_B$ is \textit{ergodic}. The proof involves a direct application of the pointwise ergodic theorem. Since we prove it more generality in the upcoming sequel we have not mentioned the proof here. 

As a consequence for a bounded size, left determined substitution $\sigma$ (on a countably infinite alphabet) with irreducible, aperiodic, and \textit{positive} recurrent substitution matrix the shift-invariant \textit{finite} measure $\nu$ on $X_\sigma$ defined in (\ref{mes}) is \textit{ergodic}.
 
\end{remark}

\section{Examples}\label{Sec ex} In this section we provide some examples of bounded size, left determined substitution on countably infinite alphabets for which stationary generalized Bratteli-Vershik diagram can be constructed using methods in Section \ref{Sec BBD}.

\begin{example} \textit{One step forward, two step back substitution on $\Z$}. Define $\sigma$ by $$
-1 \mapsto -2 -1 \,\,\,0\,\,\,;\,\,\, 0 \mapsto -1 \,\,\, 0 \,\,1
$$
$$n \mapsto (n-1)(n+1)(n+1)\,\,;\,\,n \leq -2$$
$$n \mapsto (n-1)(n-1)(n+1)\,\,;\,\,n \geq 1 $$ This substitution is bounded size and left determined, hence by Theorem \ref{isom1} there exists a stationary generalized Bratteli-Vershik model for the associated subshift. Figure \ref{fig:KRT} shows part of $V_n$, $E_n$ and $V_{n+1}$ for any $n \in \N_0$ of the stationary generalized Bratteli-Vershik model.

\begin{figure}[!htb]
    \centering
    \captionsetup{justification=centering}
    \includegraphics[width=100mm]{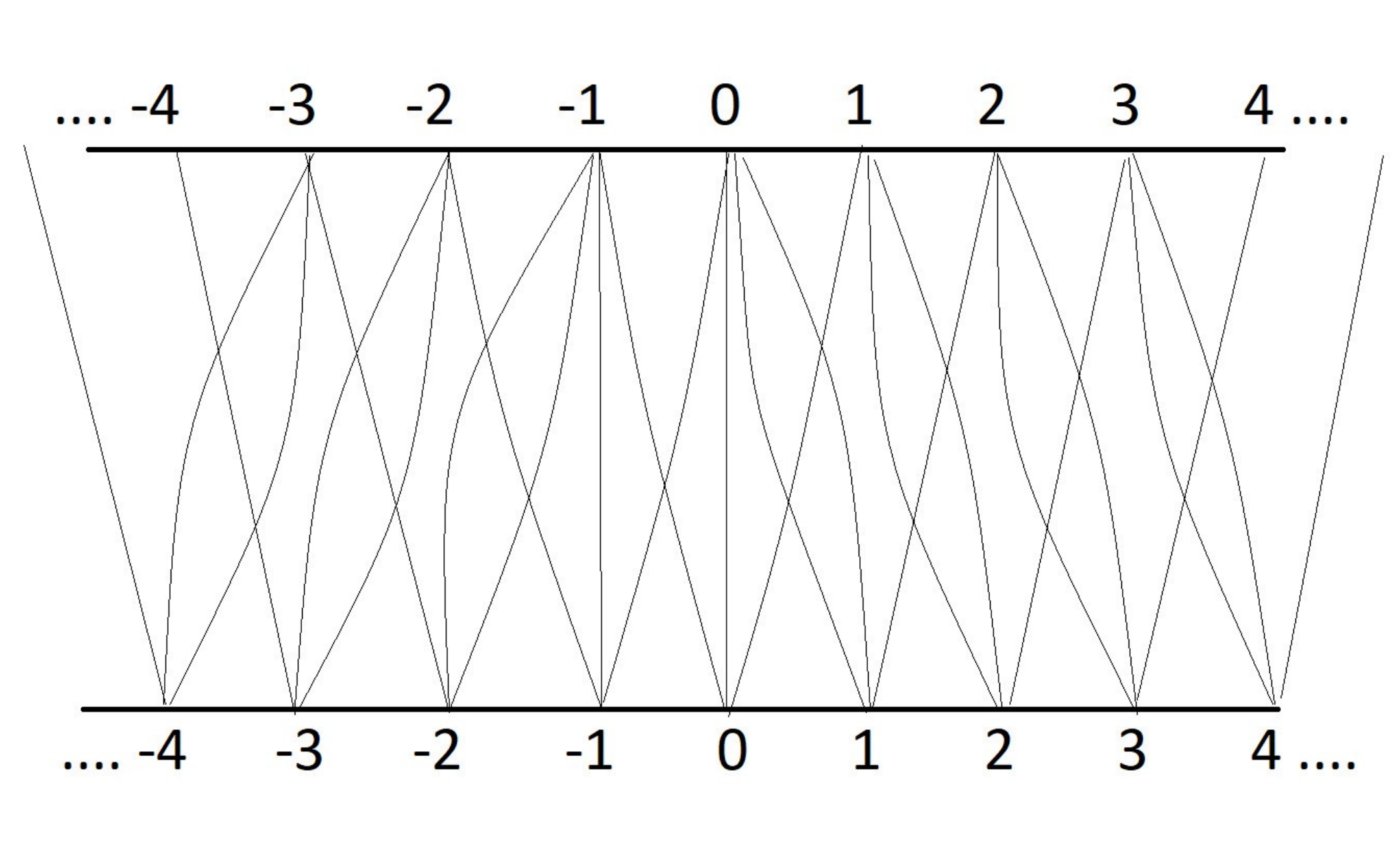}
    \caption{Generalized B-V model for Example 8.1.}
    \label{fig:KRT}
\end{figure} Since the substitution matrix is irreducible, aperiodic, and recurrent, we can apply Theorem \ref{inv3} to obtain an expression for a shift-invariant measure. Manual calculation yields P.F value $\lambda = 3$ and the left P.F eigenvector is given by$$\ell = (...\frac{1}{2^4},\frac{1}{2^3},\frac{1}{2^2},\frac{1}{2},1,1,\frac{1}{2},\frac{1}{2^2},\frac{1}{2^3}.....).$$ The right P.F eigenvector has all entries equal to $1$. By normalizing the left P.F eigenvector such that $\sum_{v} \ell_v = 1$, we obtain a \textit{probability} shift-invariant measure $\nu$ (defined in (\ref{mes})) on the associated subshift.

\end{example}

\begin{example} \textit{The squared one step forward, two steps backward, substitution on $2\Z$}. Define $\sigma$ by $$
-2 \mapsto -4 -4\,-2-2-2\,\,-4-2-2\,\,0;\,\,\, 0 \mapsto -200022002
$$
$$
n \mapsto (n - 2)(n - 2)n(n - 2)(n - 2)nnn(n + 2) \,\, \textrm{for}\,\, n \in 2\Z \setminus \{0,-2\}
$$ This substitution is again bounded size and left determined, thus by Theorem \ref{isom1} there exists a stationary generalized Bratteli-Vershik model for the associated subshift. Note that the substitution matrix is irreducible, aperiodic, and recurrent, hence we can apply Theorem \ref{inv3} to obtain an expression for a shift-invariant measure. By manual calculations, we get $\lambda = 9$. The left P.F eigenvector is given by$$\ell = (...\frac{1}{2^8},\frac{1}{2^6},\frac{1}{2^4},\frac{1}{2^2},\frac{1}{3},\frac{1}{3},\frac{1}{2^2},\frac{1}{2^4},\frac{1}{2^6},\frac{1}{2^8}.....).$$ The right P.F eigenvector has all entries equal to $1$. By normalizing the left P.F eigenvector such that $\sum_{v} \ell_v = 1$, we obtain a \textit{probability} shift-invariant measure $\nu$ (defined in (\ref{mes})) on the associated subshift.

The next two examples have recurrent substitution matrix with the sum $\sum_{v} \ell_v = \infty$. Hence the shift-invariant measure is sigma-finite.

\end{example} 

\begin{example}
\textit{Random walk on $\Z$}. Define $\sigma$ by $$
n \mapsto (n - 1)(n + 1)
$$  Note that random walk on $\Z$ is a  bounded size, left determined substitution, hence by Theorem \ref{isom1} there exists a stationary generalized Bratteli-Vershik model for the associated subshift. Figure \ref{fig:RW} shows part of $V_n$, $E_n$ and $V_{n+1}$ for any $n \in \N_0$ of the stationary generalized Bratteli-Vershik model.

\begin{figure}[!htb]
    \centering
    \captionsetup{justification=centering}
    \includegraphics[width=100mm]{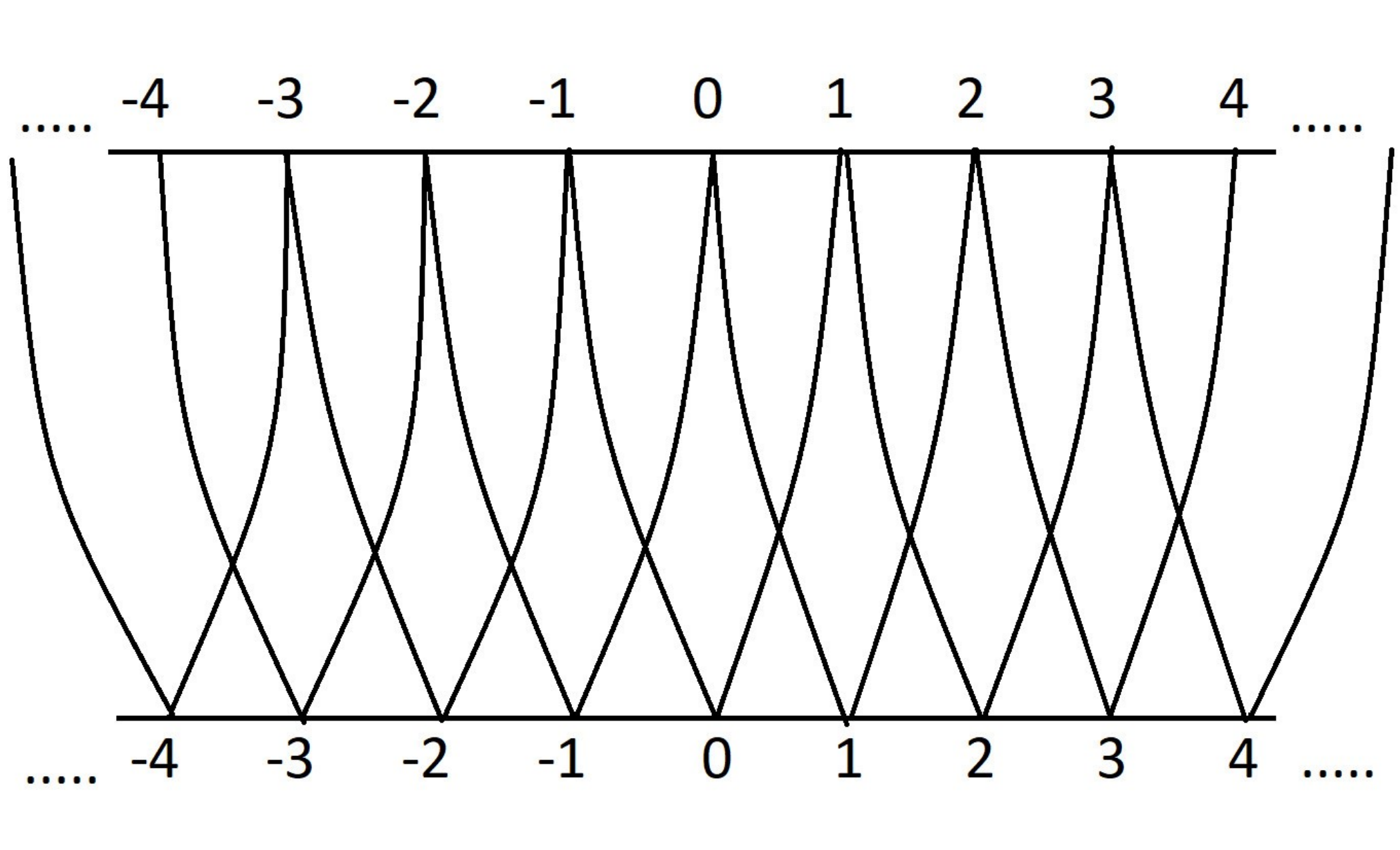}
    \caption{Generalized B-V model of the Random walk on $\Z$.}
    \label{fig:RW}
\end{figure}

The substitution matrix is irreducible, periodic with period 2, and null recurrent. Although the matrix is not aperiodic, we can still study the substitution (see Remark \ref{aper}). Manual calculation yields P.F value $\lambda = 2$ and both the left and right P.F eigenvectors are given by$$r = \ell = (.....\frac{1}{2},\frac{1}{2},\frac{1}{2},\frac{1}{2},.....).$$ Hence by Theorem \ref{inv3}, the shift-invariant measure $\nu$ (defined in (\ref{mes})) on $X_\sigma$ is a sigma-finite, infinite measure. 

\end{example}

\begin{example} \textit{Squared drunken man substitution on $2\Z$}. Define $\sigma$ by $$
n \mapsto (n - 2)nn(n + 2) \,\,; \,\,\,n \in 2 \Z
$$ This substitution is also bounded size and left determined, hence by Theorem \ref{isom1} there exists a stationary generalized Bratteli-Vershik model for the associated subshift. The substitution matrix is irreducible, aperiodic, and recurrent, hence we can apply Theorem \ref{inv3} to obtain the expression for a shift-invariant measure. Manual calculation shows P.-F. value $\lambda = 4$ and both the left and right P.F eigenvectors are given by$$r = \ell = (.....1,1,1,1,1,....).$$ Thus by Theorem \ref{inv3}, the shift-invariant measure $\nu$ (defined in (\ref{mes})) on $X_\sigma$ is a sigma-finite, infinite measure.

\end{example}

\textbf{Acknowledgments} The authors are pleased to thank Olena Karpel for many helpful discussions. This article is a part of the third-named author’s Ph.D. thesis, written under the supervision of the first and second-named authors at the University of Iowa. We thank our colleagues at the University of Iowa, USA, Ben Gurion University of the Negev, Israel, and The Hebrew University of Jerusalem, Israel, for fruitful discussions and support. We thank the referee for their detailed comments on the paper which helped us to improve the exposition.

\appendix
\section{}
\subsection{Some additional versions of the results}\label{App_A}

Theorem \ref{RK_n} is an alternate version of K-R partitions for subshift associated with left determined substitution on countably infinite alphabets. Here we, do not require the assumption of bounded length (compare with Theorem \ref{Rok 1}). 

\begin{theorem}\label{RK_n} Let $\sigma $ be a left determined substitution on a countably infinite alphabet $\A$, and let $(X_{\sigma},T)$ be the corresponding subshift. Then, for any $n \in \N$, there exists a subset $\mathcal{P}_n \subset \mathcal{L}_n$ of words of length $n$, such that $X_\sigma$ can be partitioned into Kakutani-Rokhlin towers given by

\begin{equation} \label{KR2}
    X_{\sigma} = \underset{a_0\cdots a_{n-1} \in \mathcal{P}_n(\sigma)}{\bigsqcup} \,\,\, \underset{k = 0}{\overset{h_{a_0} + ...+h_{a_{n-1}}  - 1}{\bigsqcup}} \, T^k [\sigma (a_0\cdots a_{n-1})]
\end{equation} where $h_{a_k} = |\sigma (a_k)|$ for $k \in \{0,..,n-1\}$. 
\end{theorem}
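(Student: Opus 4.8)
The plan is to combine the $n=1$ Kakutani–Rokhlin partition of Theorem \ref{Rok 2} (which does not need bounded length) with a secondary decomposition coming from the unique $\sigma$-decomposition of words, iterated once more. First I would invoke Theorem \ref{Rok 2} with $n=1$ to write $X_\sigma = \bigsqcup_{a\in\A}\bigsqcup_{k=0}^{h_a-1} T^k[\sigma(a)]$; this already realizes $X_\sigma$ as a disjoint union of towers whose bases are the cylinders $[\sigma(a)]$. The point of the present theorem is to group the letters $a\in\A$ into \emph{blocks} of length $n$: the base $[\sigma(a)]$ should be seen as sitting inside a larger tower over the cylinder $[\sigma(a_0\cdots a_{n-1})]$ where $a_0\cdots a_{n-1}$ is the length-$n$ factor of the canonical $\sigma$-decomposition around coordinate $0$, with $a=a_0$. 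The set $\mathcal P_n(\sigma)\subset\La_n$ will be precisely the set of those length-$n$ factors that arise this way, i.e. words $a_0\cdots a_{n-1}\in\La_\sigma(n)$ such that for some $x\in X_\sigma$ the unique $\sigma$-decomposition of (a long enough factor of) $x$ has the block $\sigma(a_0)\sigma(a_1)\cdots$ starting exactly at coordinate $0$, with the $a_i$'s the successive letters.

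Concretely I would argue as follows. Fix $x\in X_\sigma$. By recognizability (Corollary \ref{rec}) there is a unique $y=(y_i)_{i\in\Z}\in X_\sigma$ with $x=T^m\sigma(y)$ for a unique $m\in\{0,\dots,h_{y_0}-1\}$; equivalently, $x$ lies in the tower over $[\sigma(y_0)]$ at level $m$, from Theorem \ref{Rok 2}. Now read off the block $a_0\cdots a_{n-1} := y_0 y_1\cdots y_{n-1}$. Then $\sigma(a_0\cdots a_{n-1}) = \sigma(y_0)\sigma(y_1)\cdots\sigma(y_{n-1})$ is a prefix of $\sigma(T^{\,\cdot}y)$ sitting starting at the coordinate where $\sigma(y_0)$ begins, so $T^{-m}x\in[\sigma(y_0)]$ and hence $x = T^{m'}[\sigma(a_0\cdots a_{n-1})]$ with $m' = m + (h_{a_0}+\cdots+h_{a_{j}-1})$ for the appropriate shift — more simply, $x$ lies at some level $k\in\{0,\dots,h_{a_0}+\cdots+h_{a_{n-1}}-1\}$ of the tower with base $[\sigma(a_0\cdots a_{n-1})]$. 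This gives the covering in \eqref{KR2}. For disjointness, suppose $x$ belongs to two such towers, $T^{k}[\sigma(a_0\cdots a_{n-1})]$ and $T^{k'}[\sigma(a_0'\cdots a_{n-1}')]$. Taking a sufficiently long symmetric factor $w$ of $x$ (of length $\geq N_\sigma$ and long enough to contain both blocks $\sigma(a_0\cdots a_{n-1})$ and $\sigma(a_0'\cdots a_{n-1}')$ in full), both data would yield $\sigma$-decompositions of $w$ in which the interior blocks $\sigma(a_i)$ and $\sigma(a_i')$ appear; uniqueness of the $\sigma$-decomposition (Definition \ref{left det}) forces the two to coincide as sequences of letters and as alignments, hence $a_i=a_i'$ for all $i$ and $k=k'$. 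This is essentially the same disjointness argument already used in the proof of Theorem \ref{Rok 2}, applied after the letters are bundled into length-$n$ blocks.

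The main obstacle, and the step I would spend the most care on, is the \textbf{bookkeeping of the alignment}: making precise that the level $k$ in \eqref{KR2} is well-defined and ranges exactly over $\{0,\dots,h_{a_0}+\cdots+h_{a_{n-1}}-1\}$, and that every $x$ picks out a unique pair (block in $\mathcal P_n(\sigma)$, level $k$). The subtlety is that coordinate $0$ of $x$ falls inside some $\sigma(a_j)$ with $0\le j\le n-1$, not necessarily inside $\sigma(a_0)$, so one has to be careful about which length-$n$ window of the letter-sequence $y$ is selected — one should fix the convention that $a_0$ is the first letter of the block whose $\sigma$-image contains coordinate $0$ when $j=0$, but in general choose the window so that the block $\sigma(a_0\cdots a_{n-1})$ is the unique one among the length-$n$ sliding windows that both starts at or before coordinate $0$ and ends strictly after coordinate $0$. (Equivalently: every coordinate of $X_\sigma$ lies in exactly one translate of exactly one block $\sigma(a_0\cdots a_{n-1})$ with $a_0\cdots a_{n-1}\in\mathcal P_n(\sigma)$, by partitioning $\Z$ into the intervals occupied by successive length-$n$ super-blocks under the $\sigma$-decomposition of $x$.) Once this indexing is nailed down, the covering and disjointness are immediate from Theorem \ref{Rok 2} and the uniqueness clause in Definition \ref{left det}; finiteness of the tower heights is automatic since each $h_{a_k}=|\sigma(a_k)|<\infty$. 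Note that no bounded-length hypothesis is used, exactly because we never need $\sigma^n$ to be left determined — only $\sigma$ itself and the block structure of its $\sigma$-decomposition.
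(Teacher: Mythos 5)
There is a genuine gap, and it sits exactly at the point you yourself flag as ``the bookkeeping of the alignment.'' Your canonical assignment sends $x$ to the block $a_0\cdots a_{n-1}=y_0\cdots y_{n-1}$ and to the level $k=m\in\{0,\dots,h_{a_0}-1\}$; this only ever uses the bottom $h_{a_0}$ floors of each tower, so what it actually proves is the truncated decomposition (\ref{eq Rokh1.11}), not (\ref{KR2}). If you instead keep the full towers $\bigsqcup_{k=0}^{h_{a_0}+\cdots+h_{a_{n-1}}-1}T^k[\sigma(a_0\cdots a_{n-1})]$ over all blocks that ``arise this way,'' disjointness fails: the same point $x$ also lies in the tower over $[\sigma(y_{-1}y_0\cdots y_{n-2})]$ at level $m+h_{y_{-1}}$, and the word $y_{-1}y_0\cdots y_{n-2}$ belongs to your $\mathcal P_n$ as well, since it is the canonical block of the point $T^{-(m+h_{y_{-1}})}x$. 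Uniqueness of the $\sigma$-decomposition pins down the bi-infinite letter sequence and the cutting points, but it does not single out which of the $n$ sliding windows through coordinate $0$ is ``the'' block, so your disjointness argument does not go through. The proposed repairs do not work either: ``the unique window that starts at or before coordinate $0$ and ends after it'' is not unique (all $n$ such windows qualify), and ``partitioning $\Z$ into successive length-$n$ super-blocks'' requires choosing a phase, after which the set of tower bases is only a proper subset of the cylinder $[\sigma(w)]$ rather than the whole cylinder demanded by (\ref{KR2}).

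The paper's proof confronts this multiplicity head on. For $n=2$ it first shows that the union (\ref{exp}) of the \emph{full-height} towers over \emph{all} words of $\mathcal L_2(\sigma)$ is exactly a two-fold cover of $X_\sigma$, namely the two disjoint copies (\ref{eq Rokh1.11}) and (\ref{eq Rokh1.12}) corresponding to the two windows through coordinate $0$. It then defines $\mathcal P_2$ by a selection algorithm: enumerate $\mathcal L_2(\sigma)$ lexicographically and discard \emph{whole} towers one at a time until exactly one copy of $X_\sigma$ survives. This selection of which entire towers to delete is the ingredient missing from your argument; without it, no choice of $\mathcal P_n$ consisting of ``all blocks that arise'' can make the union in (\ref{KR2}) disjoint.
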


\begin{proof} The proof for $n =1$ follows from (\ref{tow1}). We will prove the theorem for $n=2$ in two steps. In the first step, we show that the expression 
\begin{equation}\label{exp}
     \underset{a_ia_j \in \mathcal{L}_2(\sigma)}{\bigsqcup} \,\, \underset{k = 0}{\overset{h_{a_i} + h_{a_j}  - 1}{\bigsqcup}} \, T^k [\sigma (a_ia_j)]
\end{equation} 
contains two disjoint copies of $X_{\sigma}$ given by 
\begin{equation} \label{eq Rokh1.11}
    X_{\sigma} = \underset{a_ia_j \in \mathcal{L}_2(\sigma)}{\bigsqcup} \,\,\, \underset{k = 0}{\overset{h_{a_i}  - 1}{\bigsqcup}} \, T^k [\sigma (a_ia_j)]
\end{equation} 
and 
\begin{equation} \label{eq Rokh1.12}
    X_{\sigma} = \underset{a_ia_j \in \mathcal{L}_2(\sigma)}{\bigsqcup} \,\,\, 
  \underset{k = h_{a_i}}{\overset{h_{a_i} + h_{a_j}  - 1}{\bigsqcup}} \, T^k [\sigma 
  (a_ia_j)].
\end{equation} 
In the second step, we provide an algorithm to remove a copy of $X_{\sigma}$ from 
(\ref{exp}), thus proving (\ref{KR2}). The first step is similar to the proof of 
Theorem \ref{Rok 1}. To see that (\ref{eq Rokh1.11}) holds, 
let $x = \{x_i\}_{i \in \Z} \in 
X_\sigma$ and consider the word $w = x_{-\ell}\cdots x_0\cdots x_\ell$ of 
length $2\ell + 1 $ (here $\ell$ is the same as in 
the proof of Theorem \ref{Rok 1}).
Again, using the fact that $\sigma$ is left determined, we obtain a unique 
representation of $w$ given by
\begin{equation}\label{w1_dec}
   w = w_{-p}w_{-p+1}\cdots w_0\cdots w_{s-1}w_s = w_{-p}\sigma (a_{-p+1})\cdots \sigma (a_0)\cdots  \sigma (a_{s-1}) w_s
\end{equation} 
for $a_j \in \A\,,\, j \in \{-p,...,0,...,s\}$ where $p,s$ are positive integers. We  
use the labeling $\{-p,...,0,...,s\}$ such that $x_0 \in \sigma (a_0)$. Also, by 
definition of left determined substitutions, $w_{-p}$ and $w_s$ are either empty words or 
a suffix of $\sigma (a_{-p})$ and a prefix of $\sigma (a_s)$, respectively.

Instead of considering $x_0 \in \sigma(a_0)$, we consider $x_0 \in \sigma(a_0a_1)$ where $a_0a_1 \in \mathcal{L}_2(\sigma)$. Again as in proof of Theorem \ref{Rok 1}, we denote $h_0 : = h_{a_0} = |\sigma (a_0)|$ and set $\sigma (a_0) = b_{1}\cdots b_{h_0}$, where $b_{i} \in \A$ for $i \in \{1,2,... ,h_0\}$. Note that by uniqueness of the decomposition in (\ref{w1_dec}), the 
two-letter word $a_0a_1 \in \mathcal{L}_2(\sigma)$ is uniquely determined. Moreover, $x_0 = b_{j}$ for some $j \in \{1,2,...,h_0\}$. Define
\begin{equation*}
    y = (y_i),\,  y_i = x_{i-(j-1)},\, \mathrm{for}\,\mathrm{all}\,\, i \in \Z.
\end{equation*} Set $j-1 = k$, then $T^{-k} x = y$. Since $y \in [\sigma (a_0a_1)]$, we obtain $x \in T^{k} [\sigma (a_0a_1)] $ for some $k \in \{0,...,h_0 -1\}$. This proves (\ref{eq Rokh1.11}). Since $\sigma$ is left determined, (from an argument similar to the proof of 
Theorem \ref{Rok 1}) it follows that the two unions in (\ref{eq Rokh1.11}) are disjoint. 

To prove (\ref{eq Rokh1.12}),  we consider $x \in \sigma (a_{-1}a_0)$ 
(instead $x \in \sigma (a_0a_1)$). 
Repeating the  argument similar to that above,
 we will show that  (\ref{eq Rokh1.12}) holds. This proves  
that the expression (\ref{exp}) contains two disjoint copies of $X_{\sigma}$.
 
Now, we provide an algorithm allowing us to construct a K-R partition of  
$X_\sigma$ using the towers from (\ref{exp}). Let $\{a_1, a_2, a_3,...
\}$ be an enumeration of $\A$. Define a total order  $<$ on 
$\A$, by $a_i< a_j$ if and only if $i<j$ for $i,j \in \N$. The order $<$ generates
 the lexicographic order (denoted by $\ll$) on elements 
$a_ia_j \in \mathcal{L}_2(\sigma)$. This is a total order on the set $
\mathcal{L}_2(\sigma)$. Arrange the towers in expression (\ref{exp}) in the 
increasing lexicographic order of the base elements from left to right, i.e., the tower with base $\sigma(a_ia_j)$ is on the left of the tower with the base 
$\sigma(a_ka_\ell)$ if 
and only of $a_ia_j \ll a_ka_\ell$. Let $\sigma(a_1a_2)$ be the base of the first 
tower (or leftmost tower, which we denote by $T_1$ for simplicity) in the arrangement of towers. Let a point $x \in 
X_{\sigma}$ lies in $\underset{k = a_1}{\overset{h_{a_1} + h_{a_2}  - 1}
{\bigsqcup}} \, T^k [\sigma (a_1a_2)]$, i.e. $x$ lies in upper levels of $T_1$. Proceed in lexicographic order from left to right and find the smallest tower (in the lexicographic order) to the right of the $T_1$ that also contains $x$. Remove this tower from the collection. Now repeat the process for the next (in the lexicographic order) remaining tower in the arrangement and so forth. Let $\mathcal{P}_2 \subset \mathcal{L}_2$ denote the collection of words $w$ of length two such that $[\sigma(w)]$ form the base of towers in the final collection. This gives us (\ref{KR2}) for $n =2$. The proof for $n>2$ follows similarly.
\end{proof}

Proposition \ref{base 1} and Corollary \ref{CS} are alternate versions of Proposition \ref{base 2}
and Corollary \ref{CS 2}. Here we do not require the substitution to be of bounded size. 

\begin{proposition}\label{base 1} Let $\sigma$ be a left determined substitution on a countable alphabet $\A$. We define a sequence of Borel sets $\{B_n\}_{\N_0}$ as follows: Put $X_{\sigma} = B_0 $ and for $n \in \N$, 
\begin{equation} \label{com sec}
    B_n = \underset{a_{1}\cdots a_{n} \in \mathcal{P}_n(\sigma)}{\bigsqcup} [\sigma (a_{1} \cdots a_{n})]
\end{equation} where $\mathcal{P}_n \subset \mathcal{L}_n$ is a set of words of length $n$ as in Theorem \ref{RK_n}.  Then the set $\underset{n\in \N_0}{\bigcap} B_n $ is at most countably infinite. 

\end{proposition}

\begin{proof} The proof of this proposition is similar to the proof of Proposition \ref{base 2}, hence we only provide a sketch. The fact that the 
cardinality of $\underset{n\in \N_0}{\bigcap} B_n $ is countably infinite is proved by showing that any non-empty set of the form $\underset{n \in \N}{\bigcap} [\sigma(a_1\cdots a_n)]$, where $a_{1} \cdots a_{n} \in \mathcal{L}_n(\sigma)$ for every $n \in \N$, is a singleton set and that there exist countably many infinite non-empty sets of such form. This statement follows from the fact that $\sigma$ is left determined by applying arguments similar to that used in the proof of Proposition \ref{base 2}.
 \end{proof}

\begin{corollary}\label{CS} $\mathrm{(Wandering)}$ Let $\sigma $ be a left determined substitution on a countably infinite alphabet $\A$ and $(X_{\sigma},T)$ be the corresponding subshift. Then the sequence of Borel sets $(B_n)_{n\in \N_0} \subset X_{\sigma}$, defined in (\ref{com sec}), has following properties,

\begin{enumerate}[(a)]
\item $X_{\sigma} = B_0 \supset B_1 \supset B_2 \supset B_3 \cdots\,\,\,.$
\item The cardinality of $B_{\infty}: =\underset{n\in \N_0}{\bigcap} B_n$ is countably infinite and it is a wandering set.
\item $B_n$ is a complete $T$-section for each $n \in \N_0$. 
\item For each $n \in \N_0$ every point in $B_n$ is recurrent.
\end{enumerate}
\end{corollary}

\begin{proof} The proof is similar to Corollary \ref{CS 2}, hence we omit it. 
\end{proof}

\subsection{Non-stationary generalized Bratteli-Vershik model for substitutions on infinite alphabet}\label{BBD2} In Section \ref{Sec sub-BD} we constructed nested sequence of complete sections for a subshift associated with substitution on a countably infinite alphabet (see Corollary \ref{CS 2}). We used Corollary \ref{CS 2}, in Subsection \ref{BBD1} to construct a \textit{stationary} generalized Bratteli-Vershik model for a bounded size left determined substitution $\sigma$ on a countably infinite alphabet. 

Note that Corollary \ref{CS} provided an alternate construction of a nested sequence of complete sections (the substitution is not required to be of bounded size).  If we repeat the construction in Subsection \ref{BBD1} using Corollary \ref{CS} (instead of  Corollary \ref{CS 2}), we will obtain 
alternative generalized Bratteli-Vershik models for a left determined substitution on a countably infinite alphabet. Since the complete sections in Corollary \ref{CS} are not powers of $\sigma$, an ordered generalized Bratteli diagram constructed this way will not be stationary in general. Nevertheless, by the same reasoning as in the proof of Theorem \ref{isom1},  the dynamical system on the path space of this ordered (not necessarily stationary) generalized Bratteli diagram will still be isomorphic to $(X_\sigma, T)$. We want to emphasize that in Corollary \ref{CS} (unlike Corollary \ref{CS 2}) there is no requirement for the substitution to be of bounded size. Hence this construction will allow us to build Bratteli-Vershik models for any left determined substitution. We obtain the following theorem :

\begin{theorem}\label{isom2} Let $\sigma $ be a left determined substitution on a countably infinite alphabet and $(X_\sigma, T)$ be the corresponding subshift. Then there exists an ordered (not necessarily stationary) generalized Bratteli diagram $B = (V, E, \geq)$ and a Vershik map $\varphi: Y_B \rightarrow Y_B$ such that $(X_\sigma, T)$ is isomorphic to $(Y_B, \varphi)$.

\end{theorem}

\subsection{Proof of Theorem \ref{inv1}}\label{proof} In this subsection, we provide proof of Theorem \ref{inv1} for completion. We recall the statement below. 

\begin{theorem} [Theorem 2.20, \cite{Bezuglyi_Jorgensen_2021}]  Let $B = B(F)$ be a stationary generalized Bratteli diagram such that the incidence matrix $F$ is irreducible, aperiodic and recurrent. Then

\noindent $(1)$ there exists a tail invariant measure $\mu$ on the path space $Y_B$,

\noindent $(2)$  the measure $\mu$ is finite if and only if the left 
Perron-Frobenius eigenvector $\ell = (\ell_v)$ has the property $\sum_{v} \ell_v < \infty$.
\end{theorem}

\begin{proof} We identify the sets $V_i$ with a countably infinite set 
$V$.  Since $F$ is irreducible, aperiodic, and recurrent, there exists a 
Perron-Frobenius eigenvalue  $\lambda$ for $F$ (see Theorem \ref{PF1}). 
We denote by $\ell = (\ell_v)$ a left eigenvector corresponding to $\lambda$ indexed by elements of $V$. Let $\ol e(w, v)$ denote a finite path that begins at $w \in V_0$ and ends at $v \in V_n$, $n \in \N$. To  define the measure 
$\mu$, we find its values on all cylinder sets, and then we check that this 
definition can be extended to a Borel measure using the Kolmogorov 
consistency theorem. For the cylinder set  $[\ol e(w, v)]$, we set
 \be\label{eq inv meas left}
 \mu([\ol e(w, v)]) = \frac{\ell_v}{\lambda^{n}}. 
 \ee
 To see that $\mu$ can be extended
to a Borel measure,  let  $\ol g_u = \ol e(w, v)e(v,u)$ denote the concatenation of path $\ol e(w, v)$ with an edge  $e(v, u)$ where  $u \in V_{n+1}\cap 
r(s^{-1}(v))$.  Next, we compute the measure of the set  
\begin{equation*}
\bigcup_{u \in V_{n+1} \cap r(s^{-1}(v))} [\ol g_u]
\end{equation*}
and show that it is equal to the measure of the set $[e(w, v)]$.
For this,  we use the relation 
$\ell F = \lambda \ell$:
$$
\mu \Big( \bigcup_{u \in V_{n+1} \cap r(s^{-1}(v))} [\ol g_u]\Big) 
 = \sum_{u \in V_{n+1}} f_{uv}\frac{l_u}{\lambda^{n+1}} = 
\frac{\ell_v}{\lambda^{n}} = \mu([\ol e(w, v)]).
$$ Measure $\mu$ is tail invariant since any two cylinder sets defined by finite paths terminating at the same vertex $v \in V_n$ (say (say $\ol e(w_1, v)$ and $\ol e(w_2, v)$) ) have the same measure as given by 
$$
\mu([\ol e(w_1, v)]) = \frac{\ell_v}{\lambda^{n}} = \mu([\ol e(w_2, v)]).
$$
 This proves part $(1)$ of the theorem.

To see that $(2)$ holds, we observe  that the set  $Y_B$ is the 
union of all subsets $Y_B(w)$ where $Y_B(w) = \{ x = (x_i) \in Y_B : s(x_0) =
 w\}$. Then 
 $$
 \mu(Y_B)  = \sum_{w \in V_0} \mu(Y_B(w) ) = \sum_{v \in V} \ell_v.
 $$
The measure $\mu$ is finite if and only if $\sum_{v \in V} \ell_v < \infty$.
 \end{proof}

\bibliographystyle{alpha}
\bibliography{references1.bib}
\end{document}